\theoremstyle{plain}
\newtheorem{theorem}{Theorem}
\newtheorem{lemma}[theorem]{Lemma}
\newtheorem{proposition}[theorem]{Proposition}
\newtheorem{corollary}[theorem]{Corollary}
\theoremstyle{definition}
\newtheorem{definition}[theorem]{Definition}
\newtheorem{example}[theorem]{Example}
\newtheorem{assumption}[theorem]{Assumption}
\theoremstyle{remark}
\newtheorem{remark}[theorem]{Remark}
\newcommand\ol{\overline}
\newcommand\EE{{\mathbb E}}
\newcommand\RR{{\mathbb R}}
\newcommand\ZZ{{\mathbb Z}}
\newcommand\NN{{\mathbb N}}
\newcommand\PP{{\mathbb P}}
\newcommand\GUE{{\mathbb {GUE}}}
\newcommand\si{\sigma}
\renewcommand\ell{l}
\newcommand\GT{\mathbb{G}\mathbb{T}}
\newcommand\CC{\mathbb{C}}
\newcommand\bm{\mathbf{m}}
\newcommand\YY{\mathbb{Y}}
\numberwithin{equation}{section}
\numberwithin{theorem}{section}
\numberwithin{figure}{section}
\title[Independent GUE  processes of perfect matchings on rail yard graphs]{Independent GUE  processes of perfect matchings on rail yard graphs}
\date{}
\author{Zhongyang Li}
\address{Department of Mathematics,
University of Connecticut,
Storrs, Connecticut 06269-3009, USA}
\email{zhongyang.li@uconn.edu}
\urladdr{\url{https://mathzhongyangli.wordpress.com}}
\begin{document}
\maketitle

\begin{abstract}We study perfect matchings on the rail-yard graphs in which the right boundary condition is given by the empty partition and the left boundary
can be divided into finitely many alternating line segments where all the vertices along
each line segment are either removed or remained.  When the edge
weights satisfy certain conditions, we show that the distributions of the locations of certain types of dimers near the right boundary converge to the spectra of independent GUE minor processes. The proof is based on new quantitative analysis of a formula to compute Schur functions at general points discovered in \cite{ZL18}.
\end{abstract}

\section{Introduction}

A Gaussian Unitary ensemble (GUE) is an $\infty\times \infty$ random Hermitian matrix $[H_{i,j}]_{i,j\in\NN}$ whose entries are given by independent random variables such that $H_{i,i}\sim \mathcal{N}(0,1)$ and for $i<j$, $H_{i,j}\sim \mathcal{N}\left(0,\frac{1}{2}\right)+\mathbf{i}\mathcal{N}\left(0,\frac{1}{2}\right)$ is a standard complex Gaussian random variable, whose real part and imaginary part are independent. 

A perfect matching, or a dimer cover of a graph is a subset of edges satisfying the constraint that each vertex is incident to exactly one edge. Perfect matchings are natural models in statistical mechanics, for example, dimer configurations on the hexagonal lattice model the molecule structure of graphite. See \cite{RK10,GV21} for more details.

Perfect matchings on 2-dimensional Euclidean lattice have been studied extensively. Among a number of interesting topics including phase transition (\cite{KOS06}), limit shape (\cite{CKP01,ko07,BL17,ZL20,kr20,ZL18,Li21,ZL20,LV21,ZL22,BD23,BB23}), conformal invariance (\cite{RK00,RK01,JD15,Li13,DJ19,BC21}),
the relations between distributions of random perfect matchings and spectra of GUE minor processes have attracted significant interest among probabilists for a long time. Here is an incomplete list. 
 Since the work of Johansson and Nordenstam
\cite{JN06}, it is known that the uniform random domino tilings of large size Aztec diamonds behave,
near the point of tangency of the inscribed arctic circle with the edge of the Aztec
diamond, like the spectra of the principal minors of a GUE-matrix. This was also
done in the work of Okounkov and Reshetikhin \cite{OR06} for plane partitions.
In the case
of the uniform perfect matching on a hexagon lattice, the result was proved in
\cite{VP15,JN14}. The convergence of locations of certain types of dimers near the tangent points to the spectra of GUE process in the scaling limit when the graph is a square-hexagon lattice with non-uniform weights was shown in \cite{BL17}. It was then proved in \cite{GA22} whenever a boundary of the domain has three adjacent straight segments inclined under 120 degrees to each other, the asymptotics of uniform random lozenge tilings near the middle segment is described by the GUE.

Multiple GUE minor processes arising from the same statistical mechanical model have gained significant attention recently, see for example \cite{AM13,NST23}.

Rail yard graphs are a general class of graphs introduced in \cite{bbccr} on which the random dimer coverings form Schur processes. Limit shapes and height fluctuations of dimer coverings on rail yard graphs were studied in \cite{Li21,LV21,ZL22}. The main result we proved in this paper, roughly speaking, is as follows

\begin{theorem}
 When the edge weights satisfy certain conditions, the distributions of the locations of certain types of dimers near the right boundary of a rail yard graph converge to the spectra of independent GUE minor processes in the scaling limit.
\end{theorem}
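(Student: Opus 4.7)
The plan is to pass from the combinatorial description of perfect matchings on the rail-yard graph to the determinantal structure of the associated Schur process, and then analyze the resulting correlation kernel by contour-integral asymptotics. First, I would use the identification from \cite{bbccr} between random dimer coverings on rail-yard graphs (with empty right boundary partition) and Schur processes, so that the positions of dimers of a fixed type along the right boundary form a determinantal point process whose correlation kernel admits an explicit double contour integral representation. The alternating left boundary---where each segment is either entirely removed or entirely retained---translates into a product structure over the boundary segments inside the integrand, each segment contributing a factor reflecting its length and parity.

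Second, I would substitute into this kernel the integral formula for Schur functions at general specializations derived in \cite{ZL18}, yielding a kernel schematically of the form
\begin{equation*}
K(x_1,t_1;x_2,t_2)=\frac{1}{(2\pi\mathbf{i})^2}\oint\oint\frac{e^{N[S(z)-S(w)]}\,A(z,w)}{z-w}\,dz\,dw,
\end{equation*}
where $S$ is a function encoding the limiting edge weights together with the rescaled sizes of the boundary segments, $N$ is the scale parameter, and $A(z,w)$ is a prefactor depending on the local positions $(x_i,t_i)$. The critical points of $S$ correspond to the places where the limit-shape arctic curve (established in \cite{zl21,LV21,ZL23}) becomes tangent to the right boundary, and the alternating left-boundary structure produces one such tangent point for each transition between the two types of segments.

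Third, I would perform steepest-descent analysis at scale $\sqrt{N}$ around each tangent point $\tau_j$: deform the two contours so that they pass through the associated critical point along its directions of steepest descent/ascent, while pushing the remainder into regions where the integrand is exponentially small. In an appropriately chosen Gaussian window one expects the standard double contour integral representation of the GUE corners kernel (Johansson--Nordenstam form) to emerge, giving convergence of the marginal at each tangent point to the spectrum of a GUE minor process. It then remains to show that when the two arguments are localized at \emph{distinct} tangent points $\tau_j\neq\tau_k$, the cross-kernel $K(x_1,t_1;x_2,t_2)$ tends to zero in the limit, which yields the asserted independence.

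The hard part will be this last step. Pointwise convergence to a single GUE corners kernel near one tangent point is analogous to \cite{BL17,GA22} and lies within reach of standard saddle-point methods; the new ingredient needed here is a global deformation of the double contours that simultaneously passes through every critical point, keeps the integrand exponentially decaying away from the critical points, and makes the cross contributions between distinct critical points negligible. The fine quantitative control of the \cite{ZL18} Schur function formula announced in the abstract is exactly what is required to bound these cross terms uniformly in $N$, and organizing this analysis is the technical heart of the argument.
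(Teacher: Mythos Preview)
Your proposal follows a genuinely different route from the paper. You propose to identify the dimer positions as a determinantal point process, write the correlation kernel as a double contour integral, and do steepest-descent analysis near tangent points of the arctic curve, establishing independence by showing that the cross-kernel between distinct tangent points vanishes. The paper does none of this: it works entirely with \emph{Schur generating functions} (Definition~\ref{df21}) and moment methods, never writing down a correlation kernel. The logic is: (i) under the exponential-separation weight hypothesis (Assumption~\ref{ap32}), the combinatorial sum formula for $s_{\lambda^{(l)}}$ from \cite{ZL18} (Proposition~\ref{p437}) has one dominant term, so the Schur generating function factorizes (up to $e^{-C\epsilon^{-1}}$ errors) as a product over the $n$ distinct weight values; (ii) new ``partial'' difference operators $\mathcal{D}_{h,k}$ (Section~\ref{sect:md}) extract the moments of each block of the random partition separately, and the factorization yields asymptotic independence at the level of mixed moments (Proposition~\ref{p62}); (iii) for each factor one applies the Harish--Chandra--Itzykson--Zuber identity (Lemma~\ref{hciz}) and the characterization of $\PP_{\GUE_k}$ in Lemma~\ref{lgue} to conclude. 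So the independence mechanism here is \emph{algebraic factorization of the Schur generating function driven by exponential weight separation}, not cancellation in a kernel.

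Two concrete concerns with your plan. First, the formula from \cite{ZL18} that the paper relies on is not an integral formula but a finite sum over cosets $[\Sigma_N/\Sigma_N^X]^r$ (see the Appendix, Proposition~\ref{p437}); the paper's ``quantitative analysis'' is a dominance estimate (Proposition~\ref{p436}) isolating the $\sigma_0$ term, which does not slot into a double-contour representation in the way you describe. Second, the multiple tangent points in this paper are created \emph{only} by the exponentially separated edge weights (as the introduction emphasizes), not by boundary geometry; a kernel for the Schur process with nontrivial left boundary $\lambda^{(l)}$ contains $s_{\lambda^{(l)}}$ in a way that does not immediately yield several well-separated critical points of a single action $S$, so the contour deformation you envision ``passing through every critical point simultaneously'' would first require re-deriving the very product decomposition the paper obtains algebraically. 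Your approach may be workable, but it would be a substantially different paper, and the step you flag as ``the hard part'' is exactly where the paper's Schur-generating-function machinery gives a clean shortcut that your kernel route does not.
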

See Propositions \ref{p71} and \ref{p62} for precise statements.

The major differences of this paper and previous works lie in the following
\begin{itemize}
\item Previous works focus on GUE and dimer coverings on either hexagonal lattice or square grid; while our graph is the general rail yard graph.
\item Previous works focus on uniform dimer coverings and study  the distribution of certain types of dimers near one tangent points; when multiple tangent points are indeed discussed (see for example \cite{AM13}), these tangent points are formed due to the structure of the graph; while multiple tangent points in our paper are formed only because of the edge weights. Moreover, while \cite{AM13} considers 2 GUE processes, our results applies to $n$ GUE processes for arbitrary finite $n$.
\item We develop new techniques of asymptotic analysis of a formula to compute Schur functions at general points discovered in \cite{ZL18}. We also introduce new difference operators acting on schur polynomials, which, instead of acting on all the variables in a symmetric way, acts on part of the variables in a symmetric way, which give us the marginal distribution of certain parts of the random partition.
\end{itemize}

The organization of the paper is as follows. In Section~\ref{wryg}, we review the definitions of weighted rail yard graphs, dimer coverings and known results to compute the partition function of dimer coverings and Schur generating functions on such graphs.  We also introduce piecewise boundary conditions and the corresponding edge weights.
In Section~\ref{sec:gue}, we express the Schur generating function with given piecewise boundary condition as the product of Schur generating functions with boundary conditions given by one piece of the original boundary condition, when the edge weights satisfy certain conditions. We also construct subgraphs of rail yard graphs on which the Schur generating function of dimer coverings is given by factors of the original Schur generating function. The proof is based on new analysis of Schur generating functions inspired by sampling algorithms in~\cite{bbb14}. In Section~\ref{sect:md}, we introduce new difference operators, and prove that its action on the Schur generating function gives the marginal distribution of certain parts of the random partitions corresponding to random dimer coverings on rail yard graphs. In Section~\ref{sect:idp}, we prove that different parts of random partitions are asymptotically independent.
In Section~\ref{sect:sgue}, we prove that near the right boundary, the distribution of each part of the random partition converges to the spectra of GUE matrices when the size fo the graph goes to infinity.
In Section~\ref{sect:A}, we include a combinatorial formula to compute Schur functions proved in~\cite{ZL18}.

\section{Background}\label{wryg}

In this section, we review the definitions and properties of the Gaussian Unitary Ensemble (GUE), weighted rail yard graphs, dimer coverings and known results to compute the partition function of dimer coverings and Schur generating functions on such graphs. We then introduce the piecewise boundary condition and corresponding edge weights.

\subsection{Spectra of GUE matrices}
A matrix of the GUE of size $k$ is diagonalizable with real eigenvalues $\epsilon_1\geq
\epsilon_2\geq\ldots\geq\epsilon_k$, whose distribution $\PP_{\GUE_k}$ on $\RR^k$
has a density with respect to the Lebesgue measure on $\RR^k$ proportional to:
\begin{equation*}
\prod_{1\leq i<j\leq
k}(\epsilon_i-\epsilon_j)^2\exp\left(-\sum_{i=1}^{k}\epsilon_i^2\right),
\end{equation*}
See~\cite[Theorem~3.3.1]{MM04}.
Here is the main theorem we prove in this section.

The following lemma represents the Schur function as a matrix integral over the
unitary group, the so-called Harish--Chandra--Itzykson--Zuber integral:
\begin{lemma}[\cite{hc,iz}]
  \label{hciz}
  Let $\lambda\in \YY_N$ be a partition, and let $B$ be an
  $N\times N$ diagonal matrix given by
\begin{equation*}
B=\operatorname{diag}[\lambda_1+N-1,\ldots,\lambda_j+N-j,\ldots,\lambda_N+N-N]
\end{equation*}
Let $(a_1,a_2,\ldots,a_N)\in \CC^N$, and let $A$ be an $N\times N$ diagonal matrix given by 
\begin{equation*}
A=\operatorname{diag}[a_1,\ldots,a_N].
\end{equation*}
Then,
\begin{equation}
\frac{s_{\lambda}(e^{a_1},\ldots,e^{a_N})}{s_{\lambda}(1,\ldots,1)}=\prod_{1\leq i<j\leq N}\frac{a_i-a_j}{e^{a_i}-e^{a_j}}\int_{U(N)}e^{\mathrm{Tr}(U^*AUB)}dU,
\label{shc}
\end{equation}
where $dU$ is the Haar probability measure on the unitary group $U(N)$.
\end{lemma}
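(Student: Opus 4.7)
The plan is to reduce the stated identity to the standard Harish--Chandra--Itzykson--Zuber integral and then sketch the radial-part argument for the latter. First I would rewrite the right-hand side of \eqref{shc}. Setting $b_j := \lambda_j + N - j$, the Weyl character formula gives
\begin{equation*}
  s_\lambda(e^{a_1},\ldots,e^{a_N}) = \frac{\det\bigl[e^{a_i b_j}\bigr]_{i,j=1}^N}{\prod_{i<j}(e^{a_i} - e^{a_j})},
\end{equation*}
and the Weyl dimension formula (obtained as a L'H\^opital limit of the character) gives $s_\lambda(1,\ldots,1) = \prod_{i<j}(b_i - b_j)/\prod_{i<j}(j-i)$. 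Substituting these into \eqref{shc} converts the claim into the standard form
\begin{equation*}
  \int_{U(N)} e^{\operatorname{Tr}(U^*AUB)}\, dU = \Bigl(\prod_{k=0}^{N-1} k!\Bigr)\cdot \frac{\det\bigl[e^{a_i b_j}\bigr]}{\prod_{i<j}(a_i - a_j)\prod_{i<j}(b_i - b_j)}.
\end{equation*}

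To prove this reformulated identity, denote the left-hand side by $I(A,B)$ and set $V(a) := \prod_{i<j}(a_i - a_j)$. Since the integrand $e^{\operatorname{Tr}(AUBU^*)}$ is an eigenfunction of the Euclidean Laplacian on $N\times N$ Hermitian matrices with eigenvalue $\operatorname{Tr}\bigl((UBU^*)^2\bigr) = \operatorname{Tr}(B^2)$, and since $I(A,B)$ depends only on the eigenvalues of $A$, the standard identification of the radial part of that Laplacian as $V(a)^{-1}\sum_i \partial_{a_i}^2 V(a)$ on symmetric functions yields, after differentiating under the integral,
\begin{equation*}
  \sum_i \partial_{a_i}^2 \bigl[V(a)\, I(A,B)\bigr] = \operatorname{Tr}(B^2)\, V(a)\, I(A,B).
\end{equation*}
A direct multilinear expansion shows that $\det[e^{a_i b_j}]$ is also an antisymmetric (in $a$) eigenfunction of $\sum_i \partial_{a_i}^2$ with eigenvalue $\sum_j b_j^2 = \operatorname{Tr}(B^2)$; the analogous computation works in the $b$ variables. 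Both expressions are therefore entire, jointly antisymmetric in $(a_i)$ and in $(b_j)$, and simultaneous eigenfunctions of the $a$- and $b$-Laplacians with matching eigenvalues, so they must be proportional by a standard uniqueness argument (expand in the plane-wave basis $\{e^{\langle a,\sigma b\rangle} : \sigma \in S_N\}$ and antisymmetrize). The constant of proportionality $\prod_{k=0}^{N-1} k!$ is then pinned down by asymptotic matching as $a \to \infty$ in the dominant Weyl chamber: only the identity-permutation term of the determinant survives, and on the integral side Laplace asymptotics around the saddle $U = \mathrm{Id}$ produce exactly the claimed factorials after evaluating the Gaussian fluctuation determinant on $U(N)$.

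The main obstacle is identifying the conjugated operator $V(a)^{-1}\sum_i \partial_{a_i}^2 V(a)$ with the radial part of the Hermitian matrix Laplacian; this relies on the harmonicity of the Vandermonde together with careful use of the chain rule on eigenvalues. The accompanying uniqueness statement for antisymmetric exponential-polynomial eigenfunctions of constant-coefficient operators is classical but also requires restricting to a precise function class of controlled exponential type. Since Lemma \ref{hciz} is a classical identity with references \cite{hc,iz}, I would ultimately defer the technical verification to those sources and use the outline above only to explain why the formula takes exactly the stated shape.
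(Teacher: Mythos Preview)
The paper does not supply its own proof of Lemma~\ref{hciz}: it is stated as a classical result and attributed to the references \cite{hc,iz}, with no argument given. So there is nothing in the paper to compare your proposal against line by line.

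That said, your outline is a correct reduction and a faithful sketch of the classical Harish--Chandra argument. The algebraic reduction to the standard HCIZ form is right: with $b_j=\lambda_j+N-j$, the Weyl character and dimension formulas give exactly the claimed rewriting, and the constant $\prod_{k=0}^{N-1}k!$ is indeed $\prod_{i<j}(j-i)$. The eigenfunction/radial-part strategy you describe is precisely the mechanism in \cite{hc}: the conjugation identity $V(a)^{-1}\sum_i\partial_{a_i}^2 V(a)$ for the radial Laplacian on Hermitian matrices, together with the observation that both $V(a)I(A,B)$ and $\det[e^{a_ib_j}]$ are antisymmetric joint eigenfunctions, forces proportionality. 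Your caveats about the two delicate points (the radial-part identification and the uniqueness statement in the right function class) are well placed; these are exactly where the real analytic work lives, and deferring them to \cite{hc,iz} is what the paper itself does. The alternative approach via Duistermaat--Heckman localization (closer to \cite{iz}) would fix the normalizing constant more directly than the Laplace asymptotics you invoke, but either route is acceptable here.
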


\begin{remark}
The standard definition of the Harish--Chandra–Itzykson–Zuber (HCIZ) integral on the right-hand side of \eqref{shc} allows arbitrary diagonal matrices \(A,B\in \mathbb{R}^{N\times N}\). In Lemma~\ref{hciz}, we specialize \(B\) by taking its diagonal entries to be positive integers, while we broaden \(A\) by permitting complex diagonal entries. The identity \eqref{shc} remains valid for \((a_1,\ldots,a_N)\in \mathbb{C}^N\) by analytic continuation. 

Moreover, if \(a_i = a_j\) for some \(i \neq j\), we take the limit \(a_i - a_j \to 0\) on both sides; then \eqref{shc} still holds.
\end{remark}

Recall the following lemma giving a necessary and sufficient condition for the distribution of eigenvalues of $k\times k$ GUE random matrix; see Lemma 6.2 of \cite{BL17}; see also \cite{GP15}.

\begin{lemma}
  \label{lgue}
  Let $(q_1,\dotsc,q_k)\in\RR^k$ be a random vector with distribution $\PP$ and
  $Q=\operatorname{diag}[q_1,\ldots,q_k]$ the $k\times k$ diagonal matrix
  obtained by putting the $q_i$ on the diagonal.

  Then $\PP$ is $\PP_{\GUE_k}$ if and only if for any matrix $P$,
\begin{equation*}
\EE \int_{U(k)}
\exp[\mathrm{Tr}(PUQU^*)]dU=
\exp\left(\frac{1}{2}\mathrm{Tr}P^2\right).
\end{equation*}
It is in fact enough to check the case when $P$ is diagonal, with real
coefficients.
\end{lemma}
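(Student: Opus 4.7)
The plan is to interpret both sides of the identity as moment generating functions of random Hermitian matrices and then invoke the classical fact that a finite, entire MGF determines the distribution.

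First, I would introduce the auxiliary random Hermitian matrix $H:=UQU^*$, where $U\sim\mathrm{Haar}(U(k))$ is taken independent of $Q$. By construction the eigenvalues of $H$ are exactly $(q_1,\dotsc,q_k)$. Crucially, $H$ is automatically unitarily invariant in distribution: for any fixed $V\in U(k)$, $VHV^*=(VU)Q(VU)^*$ has the same law as $H$, since Haar measure is left-invariant. With this setup the left-hand side of the claimed identity is precisely $\EE\exp(\mathrm{Tr}(PH))$, the MGF of $H$ at $P$. I would then identify the right-hand side $\exp(\tfrac12\mathrm{Tr}(P^2))$ as the MGF of a $\GUE_k$ matrix: for diagonal real $P=\mathrm{diag}(p_1,\dotsc,p_k)$ this is a direct Gaussian computation using that the diagonal entries of $\GUE_k$ are iid $\mathcal{N}(0,1)$, and for general Hermitian $P=VDV^*$ it follows from the unitary invariance of $\GUE_k$ via the change of variables $H^{\GUE}\mapsto V^*H^{\GUE}V$.

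For the ``only if'' direction I would appeal to the standard fact that in the spectral decomposition of a $\GUE_k$ matrix the eigenvectors are Haar-distributed and independent of the eigenvalues; so if $Q\sim\PP_{\GUE_k}$, the matrix $H=UQU^*$ is itself $\GUE_k$-distributed, and its MGF matches the RHS by the previous step.

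For the ``if'' direction I would first restrict to diagonal real $P$: the identity then says the joint MGF of $(H_{11},\dotsc,H_{kk})$ is $\prod_i e^{p_i^2/2}$, so the diagonal entries of $H$ are iid $\mathcal{N}(0,1)$. Next, for an arbitrary Hermitian $P=WDW^*$, I would use the automatic unitary invariance of $H$ to write
\begin{equation*}
\EE\exp(\mathrm{Tr}(PH))=\EE\exp(\mathrm{Tr}(DW^*HW))=\EE\exp(\mathrm{Tr}(DH))=\exp(\tfrac12\mathrm{Tr}(D^2))=\exp(\tfrac12\mathrm{Tr}(P^2)),
\end{equation*}
promoting the identity to all Hermitian $P$. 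By MGF uniqueness $H\stackrel{d}{=}\GUE_k$, so its eigenvalues $(q_1,\dotsc,q_k)$ have law $\PP_{\GUE_k}$. Extension of the identity from Hermitian to arbitrary $P$ is then automatic by analytic continuation, since both sides are entire in the entries of $P$ and the Hermitian matrices form a totally real slice of full real dimension in $\mathrm{Mat}_k(\CC)$.

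The main, albeit mild, obstacle I anticipate is precisely the reduction to diagonal real $P$ in the ``if'' direction: the argument rests on the automatic unitary invariance of $H$, which is built in by the very definition $H=UQU^*$ and would not hold for a generic random Hermitian matrix. Once this observation is in place, the remaining work is classical Gaussian-MGF bookkeeping.
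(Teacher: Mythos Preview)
The paper does not prove this lemma; it is recalled in Section~\ref{wryg} as a known characterization, without a proof environment, and is invoked later in Section~\ref{sect:sgue} with diagonal real $P$ only. Your argument is correct and is the standard route: passing to $H=UQU^*$ builds unitary invariance into the problem for free, the Gaussian MGF computation identifies the right-hand side with the MGF of a $\GUE_k$ matrix, and uniqueness of entire MGFs closes the ``if'' direction. The observation you flag at the end---that the reduction to diagonal real $P$ works precisely because $H=UQU^*$ is automatically unitarily invariant---is exactly the content of the lemma's last sentence, and it is what the paper relies on in its application. One small caveat worth recording: since $\int_{U(k)}\exp[\mathrm{Tr}(PUQU^*)]\,dU$ is a symmetric function of $(q_1,\dotsc,q_k)$, the ``if'' direction can pin down $\PP$ only up to permutation of coordinates; the conclusion $\PP=\PP_{\GUE_k}$ should be read at the level of the unordered spectrum (or under an a priori ordering convention on the $q_i$), which is indeed how the paper uses it.
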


\subsection{Weighted rail yard graphs and dimer covers}

Dimer coverings on weighted rail-yard graphs were first introduced and studied in \cite{bbccr}; we briefly review the basic definitions below.

Let $l,r\in\ZZ$ such that $l\leq r$. Let
\begin{eqnarray*}
[l..r]:=[l,r]\cap\ZZ,
\end{eqnarray*}
i.e., $[l..r]$ is the set of integers between $l$ and $r$. For a positive integer $m$, we use the notation $[m]$ to denote the set of all the integers from 1 to $m$, i.e.
\begin{eqnarray*}
[m]:=\{1,2,\ldots,m\}.
\end{eqnarray*}
Consider two binary sequences indexed by integers in $[l..r]$ 
\begin{itemize}
\item the $LR$ sequence $\underline{a}=\{a_l,a_{l+1},\ldots,a_r\}\in\{L,R\}^{[l..r]}$;
\item the sign sequence $\underline{b}=(b_l,b_{l+1},\ldots,b_l)\in\{+,-\}^{[l..r]}$.
\end{itemize}
The rail yard graph RYG$(l,r,\underline{a},\underline{b})$ with respect to integers $l$ and $r$, the $LR$ sequence $\underline{a}$ and the sign sequence $\underline{b}$, is the bipartite graph with vertex set $[2l-1..2r+1]\times \left\{\ZZ+\frac{1}{2}\right\}$. A vertex is called even (resp.\ odd) if its abscissa is an even (resp.\ odd) integer. Each even vertex $(2m,y)$, $m\in[l..r]$ is incident to 3 edges, two horizontal edges joining it to the odd vertices $(2m-1,y)$ and $(2m+1,y)$ and one diagonal edge joining it to
\begin{itemize}
\item the odd vertex $(2m-1,y+1)$ if $(a_m,b_m)=(L,+)$;
\item the odd vertex $(2m-1,y-1)$ if $(a_m,b_m)=(L,-)$;
\item the odd vertex $(2m+1,y+1)$ if $(a_m,b_m)=(R,+)$;
\item the odd vertex $(2m+1,y-1)$ if $(a_m,b_m)=(R,-)$.
\end{itemize} 

See Figure \ref{fig:rye} for an example of a rail yard graph.
\begin{figure}
\includegraphics[width=.8\textwidth]{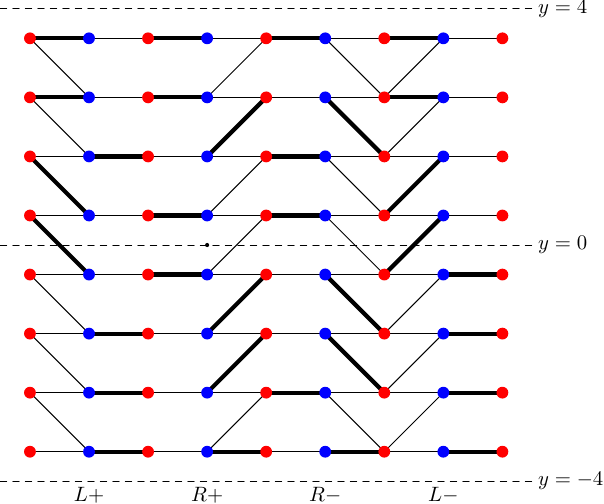}
\caption{A rail yard graph with LR sequence $\underline{a}=\{L,R,R,L\}$and sign sequence $\underline{b}=\{+,+,-,-\}$. Odd vertices are represented by red points, and even vertices are represented by blue points. Dark lines represent a pure dimer covering. Assume that above the horizontal line $y=4$, only horizontal edges with an odd vertex on the left are present in the dimer configuration; and below the horizontal line $y=-4$, only horizontal edges with an even vertex on the left are present in the dimer configuration. $l=0$ and $r=3$; red vertices have abscissas $x=-1,x=1,x=3,x=5,x=7$; blue vertices have abscissas $x=0,x=2,x=4,x=6$.Parameters atre given by $(a_0,b_0)=(L,+)$, $(a_1,b_1)=(R,+)$,$(a_2,b_2)=(R,-)$ and $(a_3,b_3)=(L,-)$.}\label{fig:rye}
\end{figure}

The left boundary (resp.\ right boundary) of RYG$ (l,r,\underline{a},\underline{b})$ consists of all odd vertices with abscissa $2l-1$ (resp.\ $2r+1$). Vertices which do not belong to the boundaries are called inner. A face of RYG$(l,r,\underline{a},\underline{b})$ is called an inner face if it contains only inner vertices.

We assign edge weights to a rail yard graph RYG$(l,r,\underline{a},\underline{b})$ as follows:
\begin{itemize}
    \item all the horizontal edges have weight 1; and
    \item each diagonal edge adjacent to a vertex with abscissa $2i$ has weight $x_i$.
\end{itemize}

\begin{definition}
A dimer covering is a subset of edges of RYG$(l,r,\underline{a},\underline{b})$ such that
\begin{enumerate}
\item each inner vertex of RYG$(l,r,\underline{a},\underline{b})$ is incident to exactly one edge in the subset;
\item each left boundary vertex or right boundary vertex is incident to at most one edge in the subset;
\item only a finite number of diagonal edges are present in the subset.
\end{enumerate}
\end{definition}

See Figure \ref{fig:rye} for an example of a dimer covering on a rail yard graph.

\subsection{Partitions}

Let $N$ be a non-negative integer. A length-$N$ partition is a non-increasing sequence $\lambda=(\lambda_i)_{i\geq 1}^{N}$ of non-negative integers. Let $\mathbb{Y}_N$ be the set of all the length-$N$ partitions. The size of a partition $\lambda\in \YY_N$ is defined by
\begin{eqnarray*}
|\lambda|=\sum_{i=1}^{N}\lambda_i.
\end{eqnarray*}
We denote the length of a partition by $l(\lambda)$. In particular, if $\lambda\in \YY_N$, then $l(\lambda)=N$. 

Given a partition $\lambda=(\lambda_1,\ldots,\lambda_\ell)$, its
\emph{Young diagram} $Y(\lambda)$ is the left-justified array of unit boxes with
$\lambda_i$ boxes in row $i$ (rows indexed from top to bottom).
Equivalently, if $D$ is a left-justified array of boxes with row-lengths
$r_1\ge r_2\ge\cdots\ge r_\ell$, then the associated partition is
$\lambda=(r_1,\ldots,r_\ell)$. See Figure \ref{fig:yd} for the Young diagram corresponding to the partition $\lambda=(5,3,3,1)$.

\begin{figure}
\includegraphics[width=.25\textwidth]{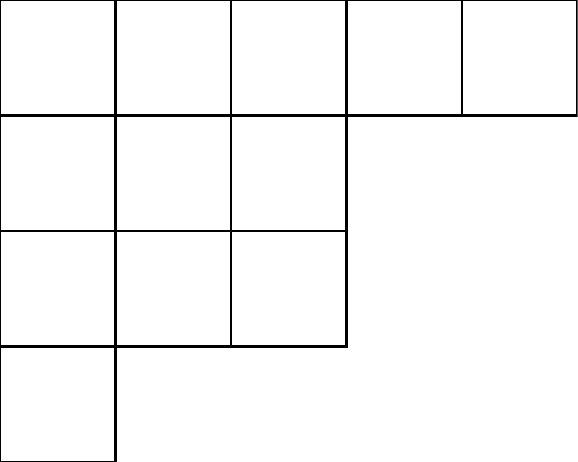}
\caption{Young diagram corresponding to the partition $\lambda=(5,3,3,1)$.}\label{fig:yd}
\end{figure}

Two partitions $\lambda\in \YY_N$ and $\mu\in\YY_{N-1}$ are called interlaced, and written by $\lambda\succ\mu$ or $\mu\prec \lambda$ if
\begin{eqnarray*}
\lambda_1\geq \mu_1\geq\lambda_2\geq \mu_2\geq \lambda_3\geq \ldots\geq \mu_{N-1}\geq \lambda_N.
\end{eqnarray*}

A length-$\infty$ partition is a non-increasing sequence $\lambda=(\lambda_i)_{i\geq 1}^{\infty}$ of non-negative integers which vanishes eventually. Each length-$N$ partition can be naturally extended to an length-$\infty$ partition by adding infinitely many $0$'s at the end of the sequence.

When representing partitions by Young diagrams, $\mu\prec \lambda$ means $\lambda/\mu$ is a horizontal strip. The conjugate partition $\lambda'$ of $\lambda$ is a partition whose Young diagram $Y_{\lambda'}$ is the image of the Young diagram $Y_{\lambda}$ of $\lambda$ by the reflection along the main diagonal. More precisely
\begin{eqnarray*}
\lambda_i':=\left|\{j\geq 1: \lambda_j\geq i\}\right|,\qquad \forall i\geq 1.
\end{eqnarray*}

The skew Schur functions are defined in Section I.5 of \cite{IGM15}.

\begin{definition}\label{dss}Let $\lambda$, $\mu$ be partitions of finite length. Define the skew Schur functions as follows
\begin{eqnarray*}
&&s_{\lambda/\mu}=\det\left(h_{\lambda_i-\mu_j-i+j}\right)_{i,j=1}^{l(\lambda)}\\
\end{eqnarray*} 
Here for each $r\geq 0$, $h_r$ is the $r$th complete symmetric function defined by the sum of all monomials of total degree $r$ in the variables $x_1,x_2,\ldots$. More precisely,
\begin{eqnarray*}
h_r=\sum_{1\leq i_1\leq i_2\leq \ldots\leq i_r} x_{i_1}x_{i_2}\cdots x_{i_r}
\end{eqnarray*}
If $r<0$, $h_r=0$.

Define the Schur function as follows
\begin{eqnarray*}
s_{\lambda}=s_{\lambda/\emptyset}.
\end{eqnarray*}
\end{definition}

For a dimer covering $M$ of RYG$(l,r,\underline{a},\underline{b})$, we associate a particle-hole configuration to each odd vertex of RYG$(l,r,\underline{a},\underline{b})$ as follows: let $m\in[l..(r+1)]$ and $k\in\ZZ$: if the odd endpoint $\left(2m-1,k+\frac{1}{2}\right)$ is incident to a present edge in $M$ on its right (resp.\ left), then associate a hole (resp.\ particle) to the odd endpoint $\left(2m-1,k+\frac{1}{2}\right)$. When $M$ is a pure dimer covering, it is not hard to check that there exists $N>0$, such that when $y>N$, only holes exist and when $y<-N$, only particles exist.

We associate a partition $\lambda^{(M,m)}$ to the column indexed by $m$ of particle-hole configurations, which corresponds to a pure dimer covering $M$ adjacent to odd vertices with abscissa $(2m-1)$ as follows. Assume
\begin{eqnarray*}
\lambda^{(M,m)}=(\lambda^{(M,m)}_1,\lambda^{(M,m)}_2,\ldots),
\end{eqnarray*}
Then for $i\geq 1$, $\lambda^{(M,m)}_i$ is the total number of holes in $M$ along the vertical line $x=2m-1$ below the $i$th highest particles. Let $l(\lambda^{(M,m)})$ be the total number of nonzero parts in the partition $\lambda^{(M,m)}$.

\begin{figure}
\includegraphics[width=.6\textwidth]{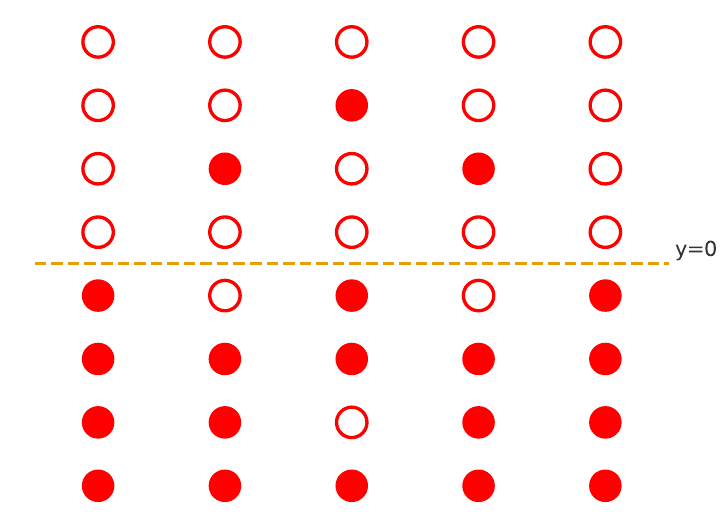}
\caption{Particle-hole configuration corresponding to the dimer covering in Figure \ref{fig:rye} at red vertices. Particles are represented by red dots, while holes are represented by red circles. Each particle/hole represents the configuration of an odd vertex (represented by red dots)  in the same location of Figure \ref{fig:rye}. The corresponding sequence of partitions (from the left to the right) is given by $\emptyset\prec(2,0,\ldots)\prec' (3,1,1,0\ldots)\succ'(2,0,\ldots)\succ \emptyset$.}\label{fig:rye}
\end{figure}

We define the charge $c^{(M,m)}$ on column $(2m-1)$ for the configuration $M$ as follows:
\begin{eqnarray}
c^{(M,m)}&=&\mathrm{number\ of\ particles\ on\ column\ }(2m-1)\ \mathrm{in\ the\ upper\ half\ plane}\label{dcg}\\
&&-\mathrm{number\ of\ holes\ on\ column\ }(2m-1)\ \mathrm{in\ the\ lower\ half\ plane}\notag
\end{eqnarray}

Recall the following lemma proved in \cite{ZL22}.

\begin{lemma}Let $M$ be a dimer covering of RYG$(l,r,\underline{a},\underline{b})$. Then for each $m\in [l..r]$, the charge $c^{(M,m)}$ is a constant independent of $m$.
\end{lemma}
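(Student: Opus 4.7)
The plan is to prove $c^{(M, m+1)} = c^{(M, m)}$ for each $m \in [l..r-1]$; the lemma then follows by induction on $m$. Writing $c^{(M, m)} = A_m - B_m$ with $A_m$ the number of particles on column $2m-1$ with $y > 0$ and $B_m$ the number of holes on column $2m-1$ with $y < 0$, it suffices to show
\begin{equation*}
A_{m+1} - A_m = B_{m+1} - B_m.
\end{equation*}
Both sides will be exhibited as equal to a common local quantity $\Delta_m$ attached to the even column $2m$, so they cancel in the subtraction.

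First I would use that $M$ contains only finitely many diagonal edges to fix $N$ such that above $y = N$ every dimer of $M$ is horizontal with its odd endpoint on the left (hence all vertices of column $2m-1$ with $y > N$ are holes) and below $y = -N$ every dimer is horizontal with its even endpoint on the left (hence all such vertices with $y < -N$ are particles). This guarantees $A_m$ and $B_m$ are finite and reduces the argument to a finite counting problem.

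The core step is a double count in the upper strip $\{y \in (0, K]\}$ for some integer $K > N+1$. Each of the $K$ inner even vertices on column $2m$ carries exactly one edge of $M$, reaching either an odd vertex on column $2m-1$ (making that vertex a hole) or an odd vertex on column $2m+1$ (making it a particle). Counting these $K$ edges in two ways---once from the even side, and once by sorting them according to their odd endpoints (producing $K - A_m$ holes on column $2m-1$ and $A_{m+1}$ particles on column $2m+1$, corrected for the diagonal edges that cross the axis $y = 0$)---yields
\begin{equation*}
(K - A_m) + A_{m+1} = K + \Delta_m,
\end{equation*}
so $A_{m+1} - A_m = \Delta_m$. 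Here $\Delta_m \in \{-1, 0, 1\}$ is a signed count of the (at most one) diagonal edge incident to column $2m$ at height $\pm 1/2$ that joins the two half-planes, with sign prescribed by $(a_m, b_m)$. The identical bookkeeping in the lower strip $\{y \in [-K, 0)\}$ gives $B_{m+1} - B_m = \Delta_m$ with the \emph{same} $\Delta_m$, because the relevant crossing edges at column $2m$ are shared between the upper- and lower-strip computations. Subtracting gives the desired equality.

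The main obstacle is the case-by-case verification, for each of the four choices $(a_m, b_m) \in \{L, R\} \times \{+, -\}$, that the contribution of the relevant diagonal edge at column $2m$ appears with the same sign in the upper- and lower-strip equations; this is a short but fiddly bookkeeping. The boundary index $m = l$ requires only a minor variant: the leftmost odd vertices can be matched only by edges going to the right (if matched at all), so $A_l = 0$ automatically, and the conservation of edges at the inner even column $2l$ still produces the identity $A_{l+1} - A_l = B_{l+1} - B_l$ under the boundary conventions of the paper.
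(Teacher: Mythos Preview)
Your approach is correct and gives a self-contained combinatorial argument, whereas the paper does not prove the lemma at all here---it simply cites Lemma~3.2 of \cite{ZL23}. So there is no ``paper's proof'' to compare with in any substantive sense; you are supplying the content that the paper outsources.

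The double-counting argument is sound. For clarity, the equality $(K-A_m)+A_{m+1}=K+\Delta_m$ holds because the set of even vertices on column $2m$ matched to an odd vertex with $y>0$ is exactly the set of even vertices matched to a hole on column $2m-1$ with $y>0$ or a particle on column $2m+1$ with $y>0$ (every odd vertex on $2m-1$ reached from $2m$ is automatically a hole, and every odd vertex on $2m+1$ reached from $2m$ is automatically a particle). The correction $\Delta_m=\delta_m^{\text{up}}-\delta_m^{\text{down}}$ records whether the unique even vertex at $y=\pm 1/2$ crosses the axis; since all diagonals in column $2m$ slope the same way (fixed by $b_m$), at most one of $\delta_m^{\text{up}},\delta_m^{\text{down}}$ is nonzero, and no case split over $(a_m,b_m)$ is really required---the argument is uniform.

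One small correction: your final paragraph is both unnecessary and contains an error. The induction step from $m$ to $m+1$ uses only that every even vertex on column $2m$ is inner (hence matched exactly once), which holds for all $m\in[l..r]$; the odd columns $2m-1$ and $2m+1$ may be boundary columns without any change to the argument, since we only use their particle/hole labels, not that they are inner. In particular $A_l$ need \emph{not} be zero: left-boundary vertices that are unmatched are particles, and the left boundary partition $\lambda^{(l)}$ may well place particles in the upper half-plane. Simply drop that paragraph.
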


\begin{proof}See Lemma 3.2 of \cite{ZL22}; see equation (26) in Proposition 8 of \cite{bbccr}.
\end{proof}

The weight of a dimer covering $M$ of $RYG(l,r,\underline{a},\underline{b})$ is defined as follows
\begin{eqnarray*}
w(M):=\prod_{i=l}^{r}x_i^{d_i(M)},
\end{eqnarray*}
where $d_i(M)$ is the total number of present diagonal edges of $M$ incident to an even vertex with abscissa $2i$. 

Let $\lambda^{(l)},\lambda^{(r+1)}$ be two partitions. The partition function $Z_{\lambda^{(l)},\lambda^{(r+1)}}(G,\underline{x})$ of dimer coverings on RYG$(l,r,\underline{a},\underline{b})$ whose configurations on the left (resp.\ right) boundary correspond to partition $\lambda^{(l)}$ (resp.\ $\lambda^{(r+1)}$) is the sum of weights of all such dimer coverings on the graph. Given the left and right boundary conditions $\lambda^{(l)}$ and $\lambda^{(r+1)}$, respectively, the probability of a dimer covering $M$ is then defined by
\begin{eqnarray}
\mathrm{Pr}(M|\lambda^{(l)},\lambda^{(r+1)}):=\frac{w(M)}{Z_{\lambda^{(l)},\lambda^{(r+1)}}(G,\underline{x})}.\label{ppd}
\end{eqnarray}
Note that pure dimer coverings have left and right boundary conditions given by
\begin{eqnarray}
\lambda^{(l)}=\lambda^{(r+1)}=\emptyset;\label{pbc}
\end{eqnarray}
respectively.

The bosonic Fock space $\mathcal{B}$ is the infinite dimensional Hilbert space spanned by the orthonormal basis vectors $|\lambda\rangle$, where $\lambda$ runs over all the partitions. Let $\langle \lambda |$ denote the dual basis vector. Let $x$ be a formal or a complex variable. Introduce the operators $\Gamma_{L+}(x)$, $\Gamma_{L-}(x)$, $\Gamma_{R+}(x)$, $\Gamma_{R-}(x)$ from $\mathcal{B}$  to $\mathcal{B}$ as follows
\begin{eqnarray*}
\Gamma_{L+}(x)|\lambda\rangle=\sum_{\mu\prec \lambda}x^{|\lambda|-|\mu|}|\mu\rangle;\qquad \Gamma_{R+}(x)|\lambda\rangle=\sum_{\mu'\prec \lambda'}x^{|\lambda|-|\mu|}|\mu\rangle;\\
\Gamma_{L-}(x)|\lambda\rangle=\sum_{\mu\succ \lambda}x^{|\mu|-|\lambda|}|\mu\rangle;\qquad \Gamma_{R-}(x)|\lambda\rangle=\sum_{\mu'\succ \lambda'}x^{|\mu|-|\lambda|}|\mu\rangle;
\end{eqnarray*}
Such operators were used in \cite{oko01} to study random partitions.

\begin{lemma}\label{l12}Let $a_1,a_2\in \{L,R\}$. We have the following commutation relations for the operators $\Gamma_{a_1,\pm}$, $\Gamma_{a_2,\pm}$.
\begin{eqnarray*}
\Gamma_{a_1,+}(x_1)\Gamma_{a_2,-}(x_2)=\begin{cases}\frac{\Gamma_{a_2,-}(x_2)\Gamma_{a_1,+}(x_1)}{1-x_1x_2}&\mathrm{if}\ a_1=a_2\\(1+x_1x_2)\Gamma_{a_2,-}(x_2)\Gamma_{a_1,+}(x_1)&\mathrm{if}\ a_1\neq a_2.\end{cases}.
\end{eqnarray*}
Moreover,
\begin{eqnarray*}
\Gamma_{a_1,b}(x_1)\Gamma_{a_2,b}(x_2)=\Gamma_{a_2,b}(x_2)\Gamma_{a_1,b}(x_1);
\end{eqnarray*}
for all $a_1,a_2\in\{L,R\}$ and $b\in\{+,-\}$.
\end{lemma}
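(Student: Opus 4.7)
The plan is to recognise the four operators $\Gamma_{a,\pm}(x)$ as the standard vertex operators on the bosonic Fock space and derive both identities from a one-line commutator computation in the Heisenberg algebra. Identifying $|\lambda\rangle$ with the Schur function $s_\lambda$ in the symmetric function ring, let $p_k$ denote multiplication by the $k$th power sum and $p_k^\perp$ its Hall-inner-product adjoint, so that $[p_k^\perp, p_j]=k\,\delta_{k,j}$. Pieri's rule applied through the generating series $H(x)=\sum h_r x^r$ and $E(x)=\sum e_r x^r$ gives
\begin{align*}
\Gamma_{L-}(x) &= H(x) = \exp\!\Big(\sum_{k\ge1}\tfrac{x^k}{k}p_k\Big), &
\Gamma_{L+}(x) &= H(x)^\perp = \exp\!\Big(\sum_{k\ge1}\tfrac{x^k}{k}p_k^\perp\Big),\\
\Gamma_{R-}(x) &= E(x) = \exp\!\Big(\sum_{k\ge1}\tfrac{(-1)^{k-1}x^k}{k}p_k\Big), &
\Gamma_{R+}(x) &= E(x)^\perp = \exp\!\Big(\sum_{k\ge1}\tfrac{(-1)^{k-1}x^k}{k}p_k^\perp\Big),
\end{align*}
the alternating sign in the $R$ operators reflecting the identity $\omega(p_k)=(-1)^{k-1}p_k$ for the conjugating involution $\omega$.

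Given this, the mixed-sign relations fall out of Baker--Campbell--Hausdorff. Writing $\Gamma_{a_1,+}(x_1)=e^A$ and $\Gamma_{a_2,-}(x_2)=e^B$, the bracket $[A,B]$ is a scalar since $A$ is linear in the $p_k^\perp$ and $B$ in the $p_j$; a direct evaluation using $[p_k^\perp,p_j]=k\,\delta_{k,j}$ yields
$$[A,B]=\sum_{k\ge1}\frac{(\sigma_{a_1}\sigma_{a_2})^{k-1}(x_1x_2)^k}{k},\qquad \sigma_L=+1,\ \sigma_R=-1,$$
so $[A,B]=-\log(1-x_1x_2)$ when $a_1=a_2$ and $[A,B]=\log(1+x_1x_2)$ when $a_1\neq a_2$. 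Since $[A,B]$ is central, $e^Ae^B=e^{[A,B]}e^Be^A$, reproducing exactly the factors $\tfrac{1}{1-x_1x_2}$ and $(1+x_1x_2)$ demanded by the lemma. The same-sign commutation $\Gamma_{a_1,b}(x_1)\Gamma_{a_2,b}(x_2)=\Gamma_{a_2,b}(x_2)\Gamma_{a_1,b}(x_1)$ is then immediate, since the two exponents involve only $p_k$'s (if $b=-$) or only $p_k^\perp$'s (if $b=+$), and these mutually commute.

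The only step needing some care is the identification of the four operators with the stated vertex-operator exponentials; this boils down to the one-variable specialisations $s_{\lambda/\mu}(x)=x^{|\lambda|-|\mu|}\mathbf{1}[\mu\prec\lambda]$ and $s_{\lambda'/\mu'}(x)=x^{|\lambda|-|\mu|}\mathbf{1}[\mu'\prec\lambda']$, together with Pieri's rule in the $h_r$ and $e_r$ forms, and for the adjoints the equality $\langle\mu|H(x)|\lambda\rangle=\langle\lambda|H(x)^\perp|\mu\rangle$. An alternative, purely combinatorial route is to compute $\langle\nu|\Gamma_{a_1,+}(x_1)\Gamma_{a_2,-}(x_2)|\mu\rangle$ as a sum over intermediate partitions $\tau$ with the appropriate interlacing and apply the single-variable skew Cauchy identity $\sum_\tau s_{\tau/\nu}(x_1)s_{\tau/\mu}(x_2)=(1-x_1x_2)^{-1}\sum_\kappa s_{\mu/\kappa}(x_1)s_{\nu/\kappa}(x_2)$ (or its conjugated $(1+x_1x_2)$ analogue). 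The main obstacle on this elementary route is managing the interlacing bookkeeping in the mixed $LR$ case, which mixes horizontal and vertical strips; this is exactly what the Heisenberg-algebra computation sidesteps, so I would present the proof via the bosonic realisation and relegate the skew-Cauchy route to a remark.
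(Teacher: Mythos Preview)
Your argument is correct: the identification of the four $\Gamma$ operators with the exponentials $\exp\bigl(\sum_{k\ge1}\tfrac{(\pm1)^{k-1}x^k}{k}p_k^{(\perp)}\bigr)$ is exactly the Pieri/skewing characterisation, and once that is granted the Heisenberg relation $[p_k^\perp,p_j]=k\delta_{k,j}$ together with the scalar-commutator form of BCH yields both identities in one stroke.

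As for comparison with the paper: there is essentially nothing to compare, since the paper does not give a proof but simply cites Proposition~7 of \cite{bbccr} (and \cite{you10,bbb14}). Those references prove the result by precisely the bosonic realisation you describe, so your proposal is in fact a faithful unpacking of what the paper defers to the literature. The alternative skew-Cauchy route you mention is also standard and equivalent; your instinct to lead with the Heisenberg computation and relegate the combinatorial version to a remark matches common practice.
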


\begin{proof}See Proposition 7 of \cite{bbccr}; see also \cite{you10,bbb14}.
\end{proof}

Given the definitions of the operators $\Gamma_{a,b}(x)$ with $a\in\{L,R\}$, $b\in\{+,-\}$, it is straightforward to check the following lemma.

\begin{lemma}\label{l13}The partition function of dimer coverings on a rail yard graph $G=RYG(l,r,\underline{a},\underline{b})$ with left and right boundary conditions given by $\lambda^{(l)},\lambda^{(r+1)}$, respectively,  is 
\begin{eqnarray}
Z_{\lambda^{(l)},\lambda^{(r+1)}}(G;\mathbf{x})=\langle\lambda^{(l)}| \Gamma_{a_lb_l}(x_l)\Gamma_{a_{l+1}b_{l+1}}(x_{l+1})\cdots \Gamma_{a_rb_r}(x_r)|\lambda^{(r+1)} \rangle;\label{bf}
\end{eqnarray}
where
\begin{align}
\mathbf{x}:=(x_{l},\ldots,x_{r})
\end{align}
denote the edge weights.
\end{lemma}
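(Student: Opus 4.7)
The plan is to establish the identity by expanding both sides column by column and matching them term by term. First, I would insert a resolution of the identity $\sum_\mu |\mu\rangle\langle\mu|$ between each pair of consecutive $\Gamma$ operators on the right-hand side of \eqref{bf}, rewriting it as
\begin{equation*}
\sum_{\lambda^{(l+1)},\ldots,\lambda^{(r)}}\prod_{m=l}^{r}\langle \lambda^{(m)}|\Gamma_{a_m b_m}(x_m)|\lambda^{(m+1)}\rangle,
\end{equation*}
where the sum runs over all tuples of intermediate partitions with $\lambda^{(l)}$ and $\lambda^{(r+1)}$ fixed by the boundary conditions.

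Next, I would exploit the particle--hole correspondence already introduced in the paper: each dimer covering $M$ of $RYG(l,r,\underline{a},\underline{b})$ with the prescribed boundary conditions determines a sequence of partitions $\lambda^{(m)}=\lambda^{(M,m)}$ obtained by reading off column $2m-1$ for $m\in[l..r+1]$, and conversely the covering is recovered from the sequence. Since the weight factors as $w(M)=\prod_{m=l}^{r}x_m^{d_m(M)}$ over the even columns, it suffices to show, for each $m\in[l..r]$ and each ordered pair $(\lambda^{(m)},\lambda^{(m+1)})$, that the sum over local dimer configurations at the even column $2m$ compatible with these two boundary partitions, weighted by $x_m^{d_m(M)}$, equals the single matrix element $\langle\lambda^{(m)}|\Gamma_{a_m b_m}(x_m)|\lambda^{(m+1)}\rangle$.

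I would then verify this local identity by case analysis on $(a_m,b_m)\in\{L,R\}\times\{+,-\}$. Fixing the particle--hole patterns on the odd columns $2m-1$ and $2m+1$, the constraint that each even vertex $(2m,y)$ is incident to exactly one edge uniquely determines which horizontal and diagonal edges of column $2m$ are present (if any valid choice exists). Tracing these choices shows that the pair $(\lambda^{(m)},\lambda^{(m+1)})$ must satisfy a specific interlacing relation: when $a_m=L$ the diagonal edge attaches on the left and the relation is interlacing of $\lambda^{(m)}$ and $\lambda^{(m+1)}$ themselves, while when $a_m=R$ the diagonal attaches on the right and the relation is interlacing of the conjugates $(\lambda^{(m)})'$ and $(\lambda^{(m+1)})'$; the sign $b_m$ controls the direction of interlacing. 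In every case the number of diagonal edges at column $2m$ equals $\bigl||\lambda^{(m)}|-|\lambda^{(m+1)}|\bigr|$, producing a weight $x_m$ to this power, which is exactly the coefficient appearing in the defining sum of $\Gamma_{a_m b_m}(x_m)$.

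The main obstacle is the careful combinatorial verification of the four local cases, in particular checking that the horizontal-strip condition on the (possibly conjugated) partitions arises precisely from the matching constraint at the even vertices of column $2m$, and that the count of diagonal edges translates into the correct power of $x_m$. Once this local identity is established, multiplying the four-type local identities over $m\in[l..r]$ and summing over the intermediate partitions recovers both the partition function on the left-hand side (as a sum over dimer coverings grouped by their induced partition sequence) and the expanded matrix element on the right-hand side, yielding \eqref{bf}.
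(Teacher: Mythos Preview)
Your proposal is correct and follows exactly the approach the paper has in mind: the paper offers no proof beyond the sentence ``Given the definitions of the operators $\Gamma_{a,b}(x)$ \ldots\ it is straightforward to check the following lemma,'' and your argument---inserting resolutions of the identity, invoking the particle--hole bijection between dimer coverings and partition sequences, and verifying the four local interlacing/weight identities column by column---is precisely the straightforward check being alluded to.
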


\begin{assumption}\label{ap14}For any $i,j\in[l..r]$, $i<j$, $a_i=a_j$ and $b_i=+$, $b_j=-$ we have
\begin{eqnarray*}
x_ix_j<1.
\end{eqnarray*}
\end{assumption}

\begin{corollary}\label{lc13}Suppose Assumption \ref{ap14} holds.
Assume for all
$i\in[l..r]$,
\begin{eqnarray}
(a_i,b_i)\neq (R,-).\label{c151}
\end{eqnarray}
Let $g_l$ be the number of nonzero entries in $\lambda^{(l)}$, then
\begin{eqnarray}
g_l\leq \left|\{i\in[l..r]:a_i=L,b_i=-\right\}|.\label{glu}
\end{eqnarray}
The partition function of dimer coverings on a rail yard graph $G=RYG(l,r,\underline{a},\underline{b})$ with left and right boundary conditions given by $\lambda^{(l)},\emptyset$, respectively,  can be computed as follows:
\begin{eqnarray}
Z_{\lambda^{(l)},\emptyset}(G;\mathbf{x})=s_{\lambda^{(l)}}\left(\mathbf{x}^{(L,-)}\right)\prod_{l\leq i<j\leq r;b_i=+,b_j=-}z_{i,j}\label{fp}
\end{eqnarray}
where
\begin{eqnarray}
z_{ij}=\begin{cases}1+x_ix_j&\mathrm{if}\ a_i\neq a_j\\\frac{1}{1-x_ix_j}&\mathrm{if}\ a_i=a_j\end{cases}\label{dzij}
\end{eqnarray}
and
\begin{eqnarray*}
\mathbf{x}^{(L,-)}:=\{x_i: (a_i,b_i)=(L,-)\}.
\end{eqnarray*}
\end{corollary}

\begin{proof}See Corollary 2.5 of \cite{Li21}.
\end{proof}

\begin{remark}The condition~\ref{c151} restricts the four a priori possible parameter pairs
attached to even columns—\((L,-),\,(R,+),\,(R,-),\,(L,+)\)—to the three
admissible pairs \((L,-),\,(R,+),\,(L,+)\). In particular, if \(b_i=-\) then
necessarily \(a_i=L\).

The condition~\ref{c151} ensures that the partition function can be written as
the product of a Schur polynomial and a rational factor; see~\eqref{fp}.
Consequently, the scaling limit of dimer coverings can be studied by analyzing
the asymptotics of Schur polynomials. In the absence of~\ref{c151}, the
factorization~\eqref{fp} no longer holds.
\end{remark}

\subsection{Schur generating functions}
\begin{definition}
  \label{df21}
  Let $\mathbf{X}=(x_1,\ldots,x_N)\in\CC^{N}$. Let $\rho$ be a
  probability measure on $\YY_N$. The \emph{Schur generating function}
  $\mathcal{S}_{\rho,\mathbf{X}}(u_1,\ldots,u_N)$ with respect to parameters
  $\mathbf{X}$ is the symmetric Laurent series in $(u_1,\ldots,u_N)$ given by
  \begin{equation*}
    \mathcal{S}_{\rho,\mathbf{X}}(u_1,\ldots,u_N)=
    \sum_{\lambda\in \YY} \rho(\lambda)
    \frac{s_{\lambda}(u_1,\ldots,u_N)}{s_{\lambda}(x_1,\ldots,x_N)}.
  \end{equation*}
\end{definition}

\begin{lemma}
  \label{lm212}
  Let $t\in[l..r]$. Assume (\ref{c151}) holds. Let
\begin{eqnarray*}
\mathbf{x}^{(L,-,> t)}&=&\{x_i:i\in[t+1...r],a_i=L,b_i=-\}.\\
\mathbf{x}^{(L,-,\leq t)}&=&\{x_i:i\in[l...t],a_i=L,b_i=-\}
\end{eqnarray*}
Let
\begin{eqnarray*}
\mathbf{u}^{(L,-,> t)}&=&\{u_i:i\in[t+1...r],a_i=L,b_i=-\}.
\end{eqnarray*}
Let $\rho^t$ be the probability measure for the partitions corresponding to dimer configurations adjacent to the column of odd vertices labeled by $2t-1$ in $RYG(l,r,\underline{a},\underline{b})$, conditional on the left and right boundary condition $\lambda^{(l)}$ and $\emptyset$, respectively.

For $i\in[l..r]$, let
  \begin{eqnarray*}
  w_i=\begin{cases}u_i&\mathrm{if}\ i\in[t+1..r],a_i=L,b_i=-\\ x_i&\mathrm{otherwise}\end{cases}
  \end{eqnarray*}
  and 
  \begin{eqnarray*}
\xi_{ij}=\begin{cases}1+w_iw_j&\mathrm{if}\ a_i\neq a_j\\\frac{1}{1-w_iw_j}&\mathrm{if}\ a_i=a_j\end{cases}.
\end{eqnarray*}

  Then the generating Schur function $\mathcal{S}_{\rho^t,\mathbf{x}^{(L,-,>t)}}$ is given by:
\begin{equation}
\mathcal{S}_{\rho^t,\mathbf{x}^{(L,-,> t)}}(\mathbf{u}^{(L,-,> t)})=
\frac{s_{\lambda^{(l)}}\left(\mathbf{u}^{(L,-,> t)},\mathbf{x}^{(L,-,\leq t)}\right)}{s_{\lambda^{(l)}}(\mathbf{x}^{(L,-)})}
\prod_{i\in[l..t],b_i=+;}
\prod_{j\in[t+1..r],a_j=L,b_j=-.}\frac{\xi_{ij}}{z_{ij}}.\label{fsg}
\end{equation}
\end{lemma}

\begin{proof}See Lemma 2.8 of \cite{Li21}.
\end{proof}

\begin{remark}The formula \eqref{fsg} for the Schur generating function follows from the Schur
branching rule; see the proof of Lemma~2.8 in \cite{Li21}. Note that \emph{both
sides} of \eqref{fsg} depend on \(\mathbf{x}^{(L,-,\le t)}\). On the
\emph{right–hand side}, the Schur polynomial
\(s_{\lambda^{(l)}}\!\big(\mathbf{x}^{(L,-)}\big)\) depends on
\(\mathbf{x}^{(L,-,\le t)}\). By Definition~\ref{df21}, the Schur generating
function is the expectation (under a probability measure) of a ratio of two
Schur polynomials. Consequently, the \emph{left–hand side} of \eqref{fsg} also
depends on \(\mathbf{x}^{(L,-,\le t)}\), because the measure \(\rho^t\) itself
is a function of \(\mathbf{x}^{(L,-,\le t)}\).
\end{remark}

\subsection{Piecewise Boundary Conditions}\label{pwbd}

\begin{assumption}\label{ap41}Let $\epsilon>0$ be a small positive parameter. 
\begin{enumerate}
\item Let $l^{(\epsilon)}<r^{(\epsilon)}$ be the  integers representing the left and right boundary of the rail yard graph depending on $\epsilon$ such that
\begin{eqnarray*}
\lim_{\epsilon\rightarrow 0}\epsilon l^{(\epsilon)}=l^{(0)}<r^{(0)}=\lim_{\epsilon\rightarrow 0}\epsilon r^{(\epsilon)}
\end{eqnarray*}
so that the scaling limit of the sequence of rail yard graphs $\{\epsilon RYG(l^{(\epsilon)},r^{(\epsilon)},\underline{a}^{(\epsilon)},\underline{b}^{(\epsilon)})\}_{\epsilon>0}$, as $\epsilon\rightarrow 0$, has left boundary given by $x=2l^{(0)}$ and right boundary given by $x=2r^{(0)}$.
\item Assume for each $\epsilon>0$, the weight of each diagonal edge incident to an even vertex with abscissa $2i$, for $i\in[l^{(\epsilon)}..r^{(\epsilon)}]$ is $x_{i,\epsilon}$.
\item Assume for all $i^{(\epsilon)}\in[l^{(\epsilon)}..r^{(\epsilon)}]$, (\ref{c151}) holds.
\end{enumerate}$\hfill\Box$
\end{assumption}

Suppose that Assumption \ref{ap41} holds. For each $\epsilon>0$, define
\begin{eqnarray*}
N_{L,-}^{(\epsilon)}:=\left|\left\{j\in\left[l^{(\epsilon)}..r^{(\epsilon)}\right]:a_j^{(\epsilon)}=L,\ b_j^{(\epsilon)}=-\right\}\right|.
\end{eqnarray*}
In other words, $N_{L,-}^{(\epsilon)}$ is the number of even columns labeled $(L,-)$.

Let $\pi$ be the bijection from $\left[N_{L,-}^{(\epsilon)}\right]$ to $\{j\in[l^{(\epsilon)}..r^{(\epsilon)}]:a_j^{(\epsilon)}=L,b_j^{(\epsilon)}=-\}$, such that
\begin{itemize}
    \item for any $i,j\in\left[N_{L,-}^{(\epsilon)}\right]$ and $i<j$, $\pi(i)<\pi(j)$.
\end{itemize}

Let
\begin{align}
N^{(\epsilon)}:=r^{(\epsilon)}-l^{(\epsilon)}+1;\label{dne}
\end{align}
for all $\epsilon>0$; i.e.~$N^{(\epsilon)}$ is the total number of even columns in the rail yard graph.

Let $\Sigma_{L,-,\epsilon}$ be the group consisting of all the bijections (permutations) from $\{j\in[l^{(\epsilon)}..r^{(\epsilon)}]:a_j^{(\epsilon)}=L,b_j^{(\epsilon)}=-\}$ to itself.

Let $\Sigma_{L,-,\epsilon}^X$ be a subgroup of $\Sigma_{L,-,\epsilon}$ preserving the values of the diagonal edge weights $x_{i,\epsilon}$'s, i.e.
\begin{eqnarray*}
\Sigma_{L,-,\epsilon}^X:=\{\si\in \Sigma_{L,-,\epsilon}:x_{\si(i),\epsilon}=x_{i,\epsilon},\ \forall\ i\in[l^{(\epsilon)}..  r^{(\epsilon)}],\ \mathrm{s.t.}\ a_i^{(\epsilon)}=L,b_i^{(\epsilon)}=-\}.
\end{eqnarray*}

Note that $\pi\Sigma_{L,-,\epsilon}$, defined by
\begin{eqnarray}
\pi\Sigma_{L,-,\epsilon}:=\{\pi\xi:\xi\in \Sigma_{L,-,\epsilon}\},\label{dps}
\end{eqnarray}
consists of all the bijections from $\left[N_{L,-}^{(\epsilon)}\right]$ to $\{j\in[l^{(\epsilon)}..r^{(\epsilon)}]:a_j^{(\epsilon)}=L,b_j^{(\epsilon)}=-\}$.
Let $\sigma_0\in\pi\Sigma_{L,-,\epsilon}$ such that
\begin{eqnarray}
x_{\si_0(1),\epsilon}\geq x_{\si_0(2),\epsilon}\geq\ldots\geq x_{\si_0\left(N_{L,-}^{(\epsilon)}\right),\epsilon}.\label{sz}
\end{eqnarray}

\begin{example}\label{ex213}Let $[l^{(\epsilon)}..r^{(\epsilon)}]=\{5,6,7,8\}$. Assume 
\begin{itemize}
\item $(a_5^{(\epsilon)},b_5^{(\epsilon)}) 
=(a_6^{(\epsilon)},b_6^{(\epsilon)})=(a_8^{(\epsilon)},b_8^{(\epsilon)})=(L,-)$, and $(a_7^{(\epsilon)},b_7^{(\epsilon)})=(R,+)$. 
\item $x_5^{(\epsilon)}=x_6^{(\epsilon)}=x_7^{(\epsilon)}=1$ and $x_8^{(\epsilon)}=\frac{1}{2}$
\end{itemize}
Then $N^{(\epsilon)}_{L,-}=3$, $N^{(\epsilon)}=4$. 
$\pi$ is the bijection from $\{1,2,3\}$ to $\{5,6,8\}$ satisfying
\begin{align*}
\pi(1)=5;\ \pi(2)=6;\ \pi(3)=8.
\end{align*}

\begin{small}
\begin{align*}
\Sigma_{L,-,\epsilon}=\left\{\left(\begin{array}{ccc}5&6&8\\5&6&8\end{array}\right),\left(\begin{array}{ccc}5&6&8\\5&8&6\end{array}\right),\left(\begin{array}{ccc}5&6&8\\6&5&8\end{array}\right),\left(\begin{array}{ccc}5&6&8\\6&8&5\end{array}\right),\left(\begin{array}{ccc}5&6&8\\8&5&6\end{array}\right),\left(\begin{array}{ccc}5&6&8\\8&6&5\end{array}\right)\right\}.
\end{align*}
\end{small}
\begin{small}
\begin{align*}
\Sigma_{L,-,\epsilon}^{X}=\left\{\left(\begin{array}{ccc}5&6&8\\5&6&8\end{array}\right),\left(\begin{array}{ccc}5&6&8\\6&5&8\end{array}\right)\right\}.
\end{align*}
\end{small}
\begin{small}
\begin{align*}
\pi\Sigma_{L,-,\epsilon}=\left\{\left(\begin{array}{ccc}1&2&3\\5&6&8\end{array}\right),\left(\begin{array}{ccc}1&2&3\\5&8&6\end{array}\right),\left(\begin{array}{ccc}1&2&3\\6&5&8\end{array}\right),\left(\begin{array}{ccc}1&2&3\\6&8&5\end{array}\right),\left(\begin{array}{ccc}1&2&3\\8&5&6\end{array}\right),\left(\begin{array}{ccc}1&2&3\\8&6&5\end{array}\right)\right\}.
\end{align*}
\end{small}
\begin{align*}
\si_0=\left(\begin{array}{ccc}1&2&3\\5&6&8\end{array}\right)\ \mathrm{or}\si_0=\left(\begin{array}{ccc}1&2&3\\6&5&8\end{array}\right)
\end{align*}
\end{example}

\begin{assumption}\label{ap428}Suppose Assumption \ref{ap41} holds. Let $s\in\left[N_{L,-}^{(\epsilon)}\right]$ such that $s$ is independent of $\epsilon$ as $\epsilon\rightarrow 0$.
Assume there exist positive integers $K_1^{(\epsilon)},K_2^{(\epsilon)},\ldots K_s^{(\epsilon)}$, such that 
\begin{enumerate}
\item let $n>0$ be the total number of distinct elements in $\{x_{\si_0(1),\epsilon},\ldots, x_{\si_0(N_{L,-}^{(\epsilon)}),\epsilon}\}$; then $n$ is finite, fixed and independent of $\epsilon$.
\item $\sum_{t=1}^s K_t^{(\epsilon)}=N_{L,-}^{(\epsilon)}$;
\item Assume the partition corresponding to the left boundary condition is $\lambda^{(l,\epsilon)}$ is given by
\begin{eqnarray*}
\lambda^{(l,\epsilon)}=\left(\lambda_1,\ldots,\lambda_{N_{L,-}^{(\epsilon)}}\right).
\end{eqnarray*}
such that
\begin{eqnarray}
\mu_1^{(\epsilon)}>\ldots>\mu_s^{(\epsilon)}\label{mi}
\end{eqnarray}
are all the distinct elements in $\left\{\lambda_1,\lambda_2,\ldots,\lambda_{N_{L,-}^{(\epsilon)}}\right\}$.
\item
\begin{eqnarray*}
&&\lambda_1=\lambda_2=\ldots=\lambda_{K_s^{(\epsilon)}}=\mu_1^{(\epsilon)};\\
&&\lambda_{K_s^{(\epsilon)}+1}=\lambda_{K_s^{(\epsilon)}+2}=\ldots=\lambda_{K_s^{(\epsilon)}+K_{s-1}^{(\epsilon)}}=\mu_2^{(\epsilon)};\\
&&\ldots\\
&&\lambda_{1+\sum_{t=2}^{s}K_t^{(\epsilon)}}=\lambda_{2+\sum_{t=2}^{s}K_t^{(\epsilon)}}=\ldots=\lambda_{\sum_{t=1}^{s}K_t^{(\epsilon)}}=\mu_s^{(\epsilon)};
\end{eqnarray*}
 \item For $i\in[n]$, let 
\begin{eqnarray}
\label{ji}J_{i}=\{t\in[s]:\exists q\in\left[N_{L,-}^{(\epsilon)}\right],\ \mathrm{s.t.}\ x_{\sigma_0(q)}\ \mathrm{is\ the}\ i\mathrm{th\ largest\ elements\ in}\\\{x_{\si_0(1),\epsilon},\ldots,x_{\si_0(N_{L,-}^{(\epsilon)}),\epsilon}\},
\mathrm{and}\ \lambda_q=\mu_t^{(\epsilon)}\}\notag
\end{eqnarray}
such that
\begin{enumerate}
\item For any $i_1,i_2\in[n]$ with $i_1<i_2$, 
if $\ell\in J_{i_1}$, and $t\in J_{i_2}$, then $\ell<t$.
\item For any $p,q$ satisfying $1\leq p\leq s$ and $1\leq q\leq s$, and $q>p$
\begin{eqnarray*}
C_1N_{L,-}^{(\epsilon)} \leq \mu_p^{(\epsilon)}-\mu_q^{(\epsilon)}\leq C_2N_{L,-}^{(\epsilon)}
\end{eqnarray*}
where $C_1$, $C_2$ are sufficiently large constants independent of $\epsilon$ as $\epsilon\rightarrow 0$.
\end{enumerate}
\end{enumerate}$\hfill\Box$
\end{assumption}

\begin{example}In Example \ref{ex213}, $n=2$ since the total number of distinct numbers in 
$\{x_5^{(\epsilon)},x_6^{(\epsilon)},x_8^{(\epsilon)}\}=\{1,1,\frac{1}{2}\}$ is 2. 
\begin{enumerate}
\item If the partition corresponding to the left boundary condition is given by
\begin{align*}
\lambda^{(l,\epsilon)}=(3,2,1)
\end{align*}
then $s=3$, 
\begin{align*}
\mu_1^{(\epsilon)}=3;\ \mu_2^{(\epsilon)}=2;\ \mu_3^{(\epsilon)}=1,
\end{align*}
\begin{align*}
K_3^{(\epsilon)}=K_2^{(\epsilon)}=K_1^{(\epsilon)}=1.
\end{align*}
\begin{align*}
J_1=\{1,2\};\ J_2=\{3\}.
\end{align*}
One can verify that Assumption \ref{ap428}(5a) holds.
\item If the partition corresponding to the left boundary condition is given by
\begin{align*}
\lambda^{(l,\epsilon)}=(3,2,2)
\end{align*}
then $s=2$, 
\begin{align*}
\mu_1^{(\epsilon)}=3;\ \mu_2^{(\epsilon)}=2.
\end{align*}
\begin{align*}
K_2^{(\epsilon)}=1;\ K_1^{(\epsilon)}=2.
\end{align*}
\begin{align*}
J_1=\{1,2\};\ J_2=\{2\}.
\end{align*}
One can verify that Assumption \ref{ap428}(5a) does not hold.
\end{enumerate}
\end{example}

\begin{assumption}\label{ap32}Suppose that Assumption \ref{ap41} holds. For $i\in[l^{(\epsilon)}..r^{(\epsilon)}]$, suppose the weights $x_i$ change with respect to $\epsilon$, and denote it by $x_{i,\epsilon}$. Let $\sigma_0$ be defined as in (\ref{sz}). More precisely, we order elements in the set
\begin{eqnarray}
\{x_{i,\epsilon}\}_{i\in[l^{(\epsilon)},r^{\epsilon}],a_{i,p}=L,b_{i,p}=-}\label{wl-}
\end{eqnarray}
by (\ref{sz}).
Assume 
\begin{eqnarray*}
x_{\sigma_0(1),\epsilon}=x_{\sigma_0(1)}>0
\end{eqnarray*}
i.e., the maximal weight in (\ref{wl-}) is a positive constant independent of $\epsilon$.

Suppose that Assumption~\ref{ap428} holds. Moreover, assume that
 \begin{eqnarray*}
\liminf_{\epsilon\rightarrow 0} \epsilon\log\left(\min_{x_{i,\epsilon}>x_{j,\epsilon}}\frac{x_{i,\epsilon}}{x_{j,\epsilon}}\right)\geq \alpha>0,
\end{eqnarray*}
where $\alpha$ is a sufficiently large positive constant independent of $\epsilon$.$\hfill\Box$
\end{assumption}

See Figure~\ref{fig:ryepw} for an example of a dimer configuration on a rail yard graph with the left boundary configuration satisfying by a piecewise boundary condition and the right boundary configuration given by an empty partition.

\begin{figure}
\includegraphics[width=1.05\textwidth]{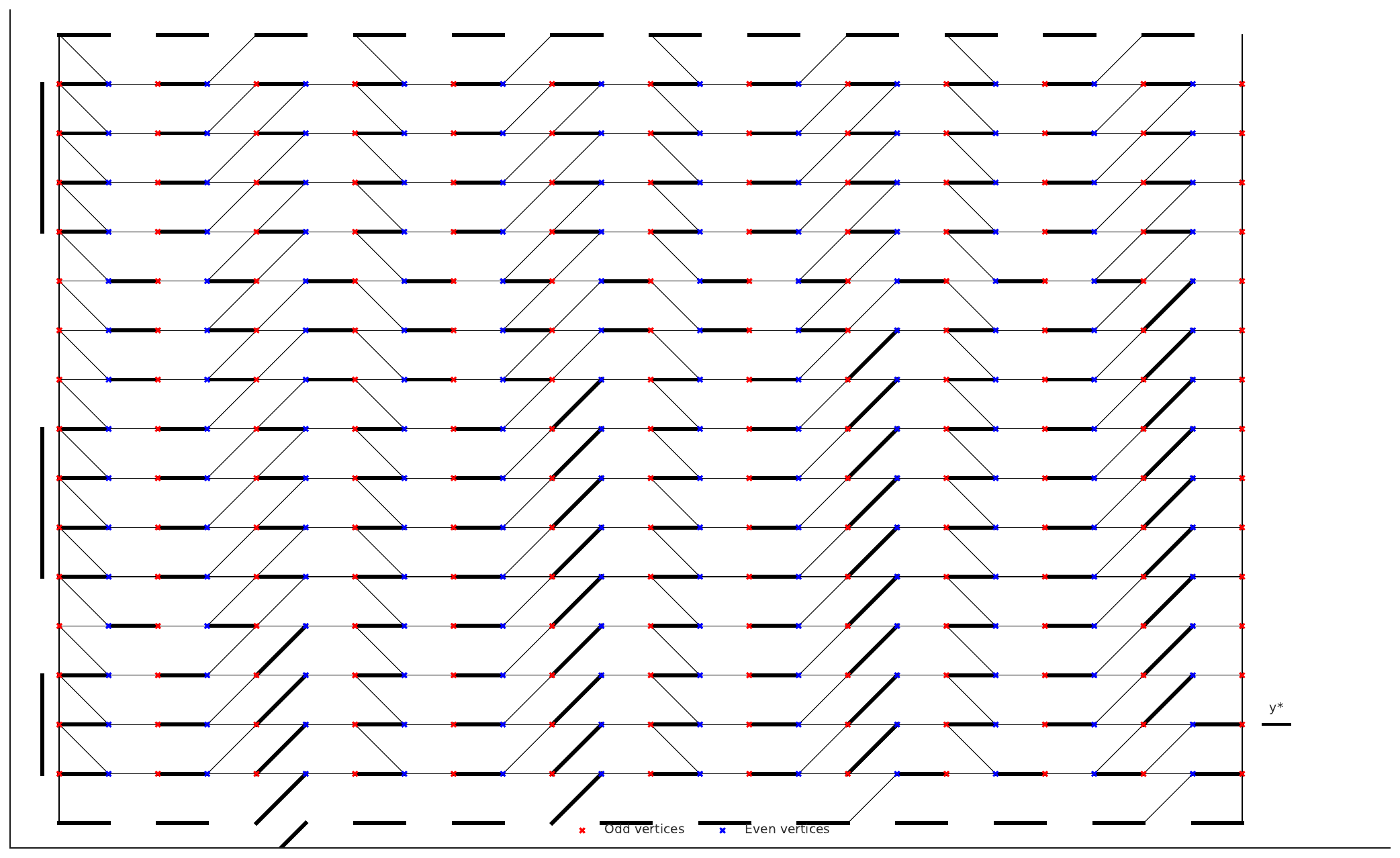}
\caption{A rail yard graph with LR sequence $\underline{a}=\{L,R,L\}$and sign sequence $\underline{b}=\{+,+,-\}$. Odd vertices are represented by red points, and even vertices are represented by blue points. Dark lines represent a dimer covering with left boundary configuration given by a piecewise boundary condition and right boundary configuration given by an empty boundary condition.}\label{fig:ryepw}
\end{figure}

For $N_{L,-}^{(\epsilon)}\geq 1$, let $\lambda^{(l)}\in \YY_{N_{L,-}^{(\epsilon)}}$ be the left boundary partition satisfying Assumption~\ref{ap428}.  Let
\begin{eqnarray*}
\Omega=\left(\Omega_1<\Omega_2<\ldots<\Omega_{N_{L,-}^{(\epsilon)}}\right)=\left(\lambda_{N_{L,-}^{(\epsilon)}}^{(l)}-N_{L,-}^{(\epsilon)},\lambda_{{N_{L,-}^{(\epsilon)}}-1}^{(l)}-N_{L,-}^{(\epsilon)}+1,\ldots,\lambda_1^{(l)}-1\right)
\end{eqnarray*}
Indeed, $\Omega_1,\ldots,\Omega_{N_{L,-}^{(\epsilon)}}$ are the  locations of the $N_{L,-}^{(\epsilon)}$ topmost particles on the left boundary of the rail yard graph, assuming that the vertical coordinate of the $N_{L,-}^{(\epsilon)}$ highest particles is $-N_{L,-}^{(\epsilon)}$ when the left boundary condition is given by the empty partition.
Under Assumption~\ref{ap428}, we may assume
\begin{eqnarray}
\Omega&=&(A_1,A_1+1,\ldots, B_1-1,B_1,\label{abt}\\
&&A_2,A_2+1,\ldots,B_2-1,B_2,\ldots,A_s,A_s+1,\ldots,B_s-1,B_s).\notag
\end{eqnarray}
where 
\begin{eqnarray*}
B_i-A_i+1=K_i.
\end{eqnarray*}
and
\begin{eqnarray*}
\sum_{i=1}^{s}(B_i-A_i+1)=N_{L,-}^{(\epsilon)}.
\end{eqnarray*}
Suppose as $\epsilon\rightarrow 0$,
\begin{eqnarray}
A_i=a_iN_{L,-}^{(\epsilon)}+o(N_{L,-}^{(\epsilon)}),\qquad B_i=b_iN_{L,-}^{(\epsilon)}+o(N_{L,-}^{(\epsilon)}),\qquad \mathrm{for}\ i\in[s],\label{abi}
\end{eqnarray}
and $a_1<b_1<\ldots<a_s<b_s$ are fixed parameters independent of $\epsilon$ and satisfy $\sum_{i=1}^{s}(b_i-a_i)=1$. Under Assumption \ref{ap428}, it is straightforward to check that for $i\in[s]$
\begin{eqnarray*}
b_i&=&\lim_{N\rightarrow\infty}\frac{\mu_{s-i+1}+\sum_{t=1}^{i}K_t}{N_{L,-}^{(\epsilon)}}-1\\
a_i&=&\lim_{N\rightarrow\infty}\frac{\mu_{s-i+1}+\sum_{t=1}^{i-1}K_t}{N_{L,-}^{(\epsilon)}}-1
\end{eqnarray*}

\begin{assumption}\label{ap36}For each $i\in[n]$, write
\begin{align*}
N_{L,-,i}^{(\epsilon)}&=|\{j\in [l^{(\epsilon)}..r^{(\epsilon)}]:a_j=L,b_j=-,\\
&x_{j,\epsilon}\ \mathrm{is\ the}\ i\mathrm{th}\ \mathrm{greatest\ element\ in}\ \{x_{\si_0(1),\epsilon},x_{\si_0(2),\epsilon},\ldots,x_{\si_0(N_{L,-}^{(\epsilon)}),\epsilon}\}\}|
\end{align*}
Suppose that for each $i\in[n]$,
\begin{eqnarray*}
\lim_{\epsilon\rightarrow 0}\frac{N_{L,-,i}^{(\epsilon)}}{N_{L,-}^{(\epsilon)}}=\theta_i\in (0,1);\qquad\mathrm{and}\qquad 
\lim_{\epsilon\rightarrow 0}\frac{N_{L,-}^{(\epsilon)}}{N^{(\epsilon)}}=\rho\in (0,1];
\end{eqnarray*}
where $N^{(\epsilon)}$ is defined by (\ref{dne}); and $\sum_{i\in [n]}\theta_i=1$.$\hfill\Box$
\end{assumption}

\section{Factorization of Schur Generating Functions}
\label{sec:gue}

In this section, we prove the following results.
\begin{enumerate}
\item Under Assumptions \ref{ap428} and \ref{ap32} regarding edge weights, as $\epsilon\rightarrow 0$, we approximately express the Schur generating function with respect to the distribution of partitions near the right boundary into the product of Schur generating functions, each of which depend on one of the distinct weights in 
\begin{align*}
\{x_{j,\epsilon}:j\in [l^{(\epsilon)}..r^{(\epsilon)}]:a_j=L,b_j=-\}
\end{align*}
Let $\mathcal{S}$ denote the set of Schur generating functions that occur as factors of the original generating function. More precisely, $\mathcal{S}$ consists of the Schur generating functions appearing on the right-hand sides of \eqref{e461} and \eqref{e462}.

\item We construct the rail yard graphs with modified edge weights and left boundary conditions such that the distribution of the partitions corresponding to dimer configurations near its right boundary has a Schur generating function as a factor in $\mathcal{S}$.

\end{enumerate}

\subsection{Product of Schur generating functions}

For each $\sigma\in \pi\Sigma_{L,-,\epsilon}$ as defined by (\ref{dps}) and $j\in [N_{L,-}^{(\epsilon)}]$, let
\begin{eqnarray}
\eta_j^{\sigma}(N_{L,-}^{(\epsilon)})=|\{k\in [N_{L,-}^{(\epsilon)}]:k>j,x_{\sigma(k),\epsilon}\neq x_{\sigma(j),\epsilon}\}|.\label{et}
\end{eqnarray}
For $1\leq i\leq n$, let
\begin{align}
\Phi^{(i,\sigma)}(N_{L,-}^{(\epsilon)})=\{\lambda_j^{(l)}+\eta_j^{\sigma}:x_{\sigma(j)}\ \mathrm{is\ the\ }i\mathrm{th\ greatest\ element\ in}\ \{x_{\si_0(1)},x_{\si_0(2)},\ldots,x_{\si_0(N_{L,-}^{(\epsilon)}),\epsilon}\}\}
\label{pis}
\end{align}
and let $\phi^{(i,\sigma)}(N_{L,-}^{(\epsilon)})$ (sometimes we just write $\phi^{(i,\sigma)}$) be the partition with length $|\{1\leq j\leq N: x_j=x_i\}|$ obtained by decreasingly ordering all the elements in $\Phi^{(i,\sigma)}(N_{L,-}^{(\epsilon)})$.

\begin{proposition}\label{p436}Suppose Assumptions \ref{ap32}, \ref{ap428} and \ref{ap36} hold, and let $\alpha$ be given as in Assumption \ref{ap32}. Let 
$\ol{\si}_0$ be the equivalence class of $\si_0$ in $[\Sigma/\Sigma_N^X]^r$; see Section~\ref{sect:A}.
For each given $\{a_i,b_i\}_{i=1}^{n}$, when $\alpha$ is sufficiently large, for any $\sigma\notin \ol{\si}_0$ we have
\begin{eqnarray}
\label{gep}&&\left|\frac{\left(\prod_{i=1}^{n}x_{i,\epsilon}^{|\phi^{(i,\sigma_0)}(N_{L,-}^{(\epsilon)})|}\right)\left(\prod_{i=1}^{n}s_{\phi^{(i,\sigma_0)}(N_{L,-}^{(\epsilon)})}(1,\ldots,1)\right)}{\left(\prod_{i=1}^{n}x_{i,\epsilon}^{|\phi^{(i,\sigma)}(N_{L,-}^{(\epsilon)})|}\right)\left(\prod_{i=1}^{n}s_{\phi^{(i,\sigma)}(N_{L,-}^{(\epsilon)})}(1,\ldots,1)\right)
}\right|\\
&&\times\left|\frac{\left(\prod_{i<j,x_{\sigma_0(i),\epsilon}\neq x_{\sigma_0(j),\epsilon}}\frac{1}{x_{\sigma_0(i),\epsilon}-x_{\sigma_0(j),\epsilon}}\right)}{\left(\prod_{i<j,x_{\sigma(i),\epsilon}\neq x_{\sigma(j),\epsilon}}\frac{1}{x_{\sigma(i),\epsilon}-x_{\sigma(j),\epsilon}}\right)}\right|\notag\geq e^{C\epsilon^{-2}}\notag
\end{eqnarray}
where $C>0$ is a constant independent of $\epsilon$ and $\si$, and increases as $\alpha$ increases. Indeed, we have
\begin{eqnarray*}
\lim_{\alpha\rightarrow\infty} C=\infty.
\end{eqnarray*}
\end{proposition}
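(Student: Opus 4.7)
The plan is to take the logarithm of the left-hand side of (\ref{gep}) and analyze the three factors separately: the weight-power ratio $\prod_i x_{i,\epsilon}^{|\phi^{(i,\sigma_0)}|-|\phi^{(i,\sigma)}|}$, the Schur dimension ratio $\prod_i s_{\phi^{(i,\sigma_0)}}(1,\ldots,1)/s_{\phi^{(i,\sigma)}}(1,\ldots,1)$, and the Vandermonde ratio. I would first observe that the Vandermonde ratio has absolute value exactly $1$: since $\sigma$ is a bijection, the multiset $\{x_{\sigma(k),\epsilon}\}_{k=1}^{N_{L,-}^{(\epsilon)}}$ is identical for every $\sigma$, so $\prod_{i<j,\, x_{\sigma(i),\epsilon}\neq x_{\sigma(j),\epsilon}}|x_{\sigma(i),\epsilon}-x_{\sigma(j),\epsilon}|$ equals $\prod_{\{a,b\}:\,a\neq b}|a-b|^{m_a m_b}$, where $m_a$ is the multiplicity of $a$, which is $\sigma$-independent. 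This reduces the problem to bounding the ratio of the first two factors.

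For the weight-power ratio, the key input is Assumption \ref{ap428}(5a)--(5b): the sets $J_i$ form contiguous intervals with $J_1<J_2<\cdots<J_n$, so under $\sigma_0$ the $i$th largest weight class receives precisely the $\mu$-blocks indexed by $J_i$, pairing the $i$th largest weight with the $i$th largest batch of partition entries. Decomposing $|\phi^{(i,\sigma)}| = \sum_{j:\,x_{\sigma(j),\epsilon}=x_{i,\epsilon}}(\lambda_j^{(l)} + \eta_j^\sigma)$, both the $\lambda^{(l)}$-part and the $\eta$-part of the sum $\sum_i |\phi^{(i,\sigma)}|\log x_{i,\epsilon}$ are maximized at $\sigma_0$ by the discrete rearrangement inequality (for the $\eta$-part, this follows because $T_i(\sigma):=\sum_{j:\,x_{\sigma(j),\epsilon}=x_{i,\epsilon}}j$ is increasing in $i$ under $\sigma_0$ while $\log x_{i,\epsilon}$ is decreasing). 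For any $\sigma\notin\bar\sigma_0$, at least one pair of positions must lie in different weight classes than under $\sigma_0$; the $\lambda^{(l)}$-part alone then decreases by at least $(\mu_p^{(\epsilon)}-\mu_q^{(\epsilon)})\log(x_{i_1,\epsilon}/x_{i_2,\epsilon}) \geq C_1 \rho\alpha\epsilon^{-2}$, using the bound $\mu_p-\mu_q\geq C_1 N_{L,-}^{(\epsilon)}$ from Assumption \ref{ap428}(5b) and the weight separation $\log(x_{i_1,\epsilon}/x_{i_2,\epsilon})\geq \alpha\epsilon^{-1}$ (for $i_1<i_2$) from Assumption \ref{ap32}.

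For the Schur dimension ratio, I would use the identity $s_\lambda(1^K) = \prod_{1\le p<q\le K}(\lambda_p-\lambda_q+q-p)/(q-p)$. Since the entries of each $\phi^{(i,\sigma)}$ lie in an interval of length $O(\epsilon^{-1})$ and $N_{L,-,i}^{(\epsilon)}=O(\epsilon^{-1})$, each term satisfies $\log(\lambda_p-\lambda_q+q-p)=\log(q-p)+O(1)$, so the leading $\log(q-p)$ contributions cancel the denominator and leave $\log s_{\phi^{(i,\sigma)}}(1^{N_{L,-,i}^{(\epsilon)}})=O((N_{L,-,i}^{(\epsilon)})^2)=O(\epsilon^{-2})$. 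Consequently the logarithm of the Schur dimension ratio is bounded in absolute value by $C_S\epsilon^{-2}$ with $C_S$ independent of $\alpha$. Combining the three factors, the logarithm of the left-hand side of (\ref{gep}) is at least $(C_1\rho\alpha - C_S)\epsilon^{-2}$ plus lower-order terms, yielding the required bound with $C=C_1\rho\alpha - C_S\to\infty$ as $\alpha\to\infty$. The principal technical obstacle is obtaining the sharp $O(\epsilon^{-2})$ bound on the Schur dimension ratio rather than the naive $O(\epsilon^{-2}\log\epsilon^{-1})$ bound one gets by merely estimating each factor in modulus; achieving the sharp bound requires exploiting the block structure of the partitions $\phi^{(i,\sigma)}$ inherited from Assumption \ref{ap428} so that all the large $\log(q-p)$ factors in numerator and denominator cancel systematically, otherwise the $\epsilon^{-2}\log\epsilon^{-1}$ error would absorb the linear-in-$\alpha$ gain.
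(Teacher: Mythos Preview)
Your proposal is correct and follows the natural approach; the paper itself does not give an independent proof but refers to Proposition~4.5 of \cite{ZL18}, where the same three-factor decomposition (Vandermonde, weight-power, Schur dimension) and rearrangement argument appear.

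Two remarks on the details. First, the ``principal technical obstacle'' you flag is less delicate than you suggest. The bound $\log s_{\phi^{(i,\sigma)}}(1^{K})=O(K^{2})$ with $K=N_{L,-,i}^{(\epsilon)}$ follows from the elementary estimate
\[
\sum_{d=1}^{K-1}(K-d)\log\Bigl(1+\frac{M}{d}\Bigr)
= K^{2}\int_{0}^{1}(1-x)\log\Bigl(1+\frac{r}{x}\Bigr)\,dx + O(K\log K),
\qquad r=\frac{M}{K},
\]
where $M=O(\epsilon^{-1})$ is the range of entries of $\phi^{(i,\sigma)}$; since $r=O(1)$ the integral is finite (the singularity $\log(1/x)$ is integrable). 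No appeal to the block structure of $\phi^{(i,\sigma)}$ is needed, and the resulting constant $C_{S}$ depends only on $C_{2}$ and the $\theta_{i}$, hence not on $\alpha$, exactly as you require.

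Second, your one-line justification for the $\lambda^{(l)}$-part deficit (``at least one pair of positions must lie in different weight classes'') tacitly uses the consequence of Assumption~\ref{ap428}(5a) that under $\sigma_{0}$ each $\mu$-block lies entirely within a single weight class. This is what guarantees that any class reassignment must move mass between \emph{distinct} $\mu$-blocks, so that the minimal deficit is indeed $(\mu_{p}^{(\epsilon)}-\mu_{q}^{(\epsilon)})\log(x_{i_{1},\epsilon}/x_{i_{2},\epsilon})$ rather than zero. It is worth making this step explicit.
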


\begin{proof}Same arguments as the proof of Proposition 4.5 in \cite{ZL18}.
\end{proof}

\begin{remark}In Proposition~\ref{p437}, we express the Schur polynomial as a sum of products over equivalence classes in $[\Sigma/\Sigma_N^X]^r$. Proposition~\ref{p436} shows that, under suitable assumptions, a single term—the one corresponding to $\overline{\sigma}_0$—dominates, while all other terms are negligible. Consequently, we obtain the asymptotic product formula (\ref{pf}) for the Schur polynomial, which we then exploit to streamline the subsequent analysis.

The limit $\lim_{\alpha\to\infty} C=\infty$ indicates that, as the exponent $\alpha$ increases, imposing that a single edge weight grow exponentially relative to the others amplifies the dominance of one term in the sum \eqref{sws}; see Assumption~\ref{ap32} for the definition of $\alpha$.
\end{remark}

Recall that the Schur generating function was defined in Definition \ref{df21}. By Lemma \ref{lm212}, we obtain
\begin{align}\label{sbf}
\mathcal{S}_{\rho^t,\mathbf{x}^{(L,-,> t)}}(\mathbf{u}^{(L,-,> t)})=
\frac{s_{\lambda^{(l)}}\left(\mathbf{u}^{(L,-,> t)},\mathbf{x}^{(L,-,\leq t)}\right)}{s_{\lambda^{(l)}}(\mathbf{x}^{(L,-)})}
\prod_{i\in[l..t],b_i=+;}
\prod_{j\in[t+1..r],a_j=L,b_j=-.}\frac{\xi_{ij}}{z_{ij}}.
\end{align}

When the edge weights satisfy Assumption \ref{ap32} we have
\begin{align*}
\prod_{i\in[l..t],b_i=+;}
\prod_{j\in[t+1..r],a_j=L,b_j=-.}\frac{\xi_{ij}}{z_{ij}}=\left(1+O(e^{-C\epsilon^{-1}})\right)\prod_{i\in[l..t],b_i=+;}\prod_{j\in[t+1..r],a_j=L,b_j=-;x_{j,\epsilon}=x_{\si_0(1)},\epsilon}\frac{\xi_{ij}}{z_{ij}}.
\end{align*}
By Proposition \ref{p437} Corollary \ref{p438} and Proposition \ref{p436}.
\begin{align}
&\frac{s_{\lambda^{(l)}}\left(\mathbf{u}^{(L,-,> t)},\mathbf{x}^{(L,-,\leq t)}\right)}{s_{\lambda^{(l)}}(\mathbf{x}^{(L,-)})}\label{pf}\\
&=\left(1+e^{-C\epsilon^{-2}}\right)\prod_{i=1}^{n}
\frac{s_{\phi^{(i,\sigma_0)}}\left(\underline{u}^{(i)},1,\ldots,1\right)}{s_{\phi^{(i,\sigma_0)}}\left(1,\ldots,1\right)}\prod_{1\leq i<j\leq N_{L,-}^{(\epsilon)},x_{\si_0(i),\epsilon}\neq x_{\si_0(j),\epsilon}}\frac{x_{\si_0(i),\epsilon}-x_{\si_0(j),\epsilon}}{w_{\si_0(i),\epsilon}-w_{\si_0(j)},\epsilon}\notag,
\end{align}
where
\begin{align*}
\underline{u}^{(i)}=&\left\{\frac{u_j}{x_{j,\epsilon}}:x_{j,\epsilon}\ \mathrm{is\ the\ }i\mathrm{th\ greatest\ element\ in\ } \{x_{\si_0(1),\epsilon},x_{\si_0(1),\epsilon},\ldots,x_{\si_0(N_{L,-}^{(\epsilon),\epsilon}),\epsilon}\},\right.\\
&\left.a_j=L,b_j=-,t+1\leq j\leq r\right\}
\end{align*}

Under Assumption \ref{ap32} we have
\begin{align}
&\prod_{i<j,a_i=a_j=L,b_i=b_j=-,x_{\si_0(i),\epsilon}\neq x_{\si_0(j),\epsilon}}\frac{x_{\si_0(i),\epsilon}-x_{\si_0(j),\epsilon}}{w_{\si_0(i),\epsilon}-w_{\si_0(j),\epsilon}}\label{pf1}\\
&=\left(1+e^{-C{\epsilon^{-1}}}\right)\prod_{i\in [N_{L,-}^{(\epsilon)}]:\sigma_0(i)\in[t+1..r]}\left(\frac{x_{\si_0(i),\epsilon}}{u_{\si_0(i)}}\right)^{\left|\{j\in [N_{L,-}^{(\epsilon)}]:j>i;x_{\si_0(i)}\neq x_{\si_0(j)}\}\right|}\notag
\end{align}

Recall the following lemma proved in \cite{zl202}.
\begin{lemma}\label{l212}
\begin{enumerate}
\item Assume that $x_1,\ldots,x_N\in \CC$ such that
\begin{eqnarray}
|\{i\in [N]: x_i=0\}|=b\in [N]\label{bz}
\end{eqnarray}
Let $\lambda\in \YY_N$ such that there are exactly $a \in[N]$ components of $\lambda$ taking value 0. If $a<b$
 Then
\begin{eqnarray}
s_{\lambda}(x_1,\ldots,x_N)=0.\label{shz}
\end{eqnarray}
\item Let $N$ be a positive integer. Let $b\in[N]$. Assume that 
\begin{eqnarray*}
(u_1,\ldots,u_{N-b},0^{b})\in \CC^N
\end{eqnarray*}
where 
\begin{eqnarray*}
0^{b}=(0,\ldots,0)\in \RR^{b}
\end{eqnarray*}

Let $\tilde{\lambda}\in \YY_N$ such that there are exactly $a \in[N]$ components of $\tilde{\lambda}$ taking value 0.
If $a\geq b$, let 
\begin{eqnarray}
\lambda:&=&(\tilde{\lambda}_1,\tilde{\lambda}_2,\ldots,\tilde{\lambda}_{N-b})\in \GT_{N-b}\label{dtl}
\end{eqnarray}
 Then
\begin{eqnarray*}
s_{\tilde{\lambda}}(u_1,\ldots,u_{N-b},0^b)=s_{\lambda}(u_1,\ldots,u_{N-b}).
\end{eqnarray*}
\end{enumerate}
\end{lemma}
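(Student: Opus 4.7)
The plan is to use the combinatorial (semistandard Young tableaux) definition of Schur polynomials throughout, since it handles vanishing at zero arguments much more transparently than the bialternant formula. Recall that
\[
s_{\mu}(y_1,\ldots,y_M)=\sum_T \prod_{i=1}^{M} y_i^{m_i(T)},
\]
where $T$ ranges over semistandard Young tableaux (SSYT) of shape $\mu$ with entries in $\{1,\ldots,M\}$ and $m_i(T)$ counts the $i$'s in $T$. In particular, $s_\mu(y_1,\ldots,y_M)\equiv 0$ whenever $\mu$ has more than $M$ positive parts, because no such tableau exists.

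For part~(1) I would first invoke the symmetry of $s_\lambda$ in its arguments to assume without loss of generality that $x_{N-b+1}=\cdots=x_N=0$. Any SSYT using an entry in $\{N-b+1,\ldots,N\}$ then contributes $0$ to the sum, so $s_\lambda(x_1,\ldots,x_N)$ equals the sum of $\prod_{i=1}^{N-b}x_i^{m_i(T)}$ over SSYT $T$ of shape $\lambda$ with entries in $\{1,\ldots,N-b\}$. The hypothesis $a<b$ forces $\lambda$ to have $N-a>N-b$ strictly positive parts, so no such tableau can exist (the column strict condition would require a new symbol in row $N-b+1$), and the sum is empty. This gives $s_\lambda(x_1,\ldots,x_N)=0$.

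For part~(2), the same reduction applied to $s_{\tilde\lambda}(u_1,\ldots,u_{N-b},0^b)$ restricts the tableau sum to SSYT of shape $\tilde\lambda$ with entries in $\{1,\ldots,N-b\}$. Since $\tilde\lambda$ has $a\geq b$ zero parts, the nonzero rows of $\tilde\lambda$ all lie among the first $N-a\leq N-b$ rows, which coincide exactly with the nonzero rows of the truncation $\lambda=(\tilde\lambda_1,\ldots,\tilde\lambda_{N-b})$. Restriction to rows $1,\ldots,N-b$ is therefore a weight-preserving bijection from SSYT of shape $\tilde\lambda$ with entries in $\{1,\ldots,N-b\}$ onto SSYT of shape $\lambda$ with entries in $\{1,\ldots,N-b\}$, and summing across this bijection yields the claimed equality $s_{\tilde\lambda}(u_1,\ldots,u_{N-b},0^b)=s_\lambda(u_1,\ldots,u_{N-b})$.

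There is essentially no analytic obstacle here — both statements are routine consequences of the tableau formula. The only subtlety is bookkeeping: one must take care that the zero parts of the partition and the zero arguments of the Schur polynomial line up correctly under the bijection, and that the permutation of variables in part~(1) invokes the symmetry of $s_\lambda$ rather than requiring a specific ordering. A bialternant-based proof would also work but would require taking a delicate $x_i\to 0$ limit, since both numerator and denominator in Weyl's formula vanish when variables coincide; the SSYT route sidesteps this entirely and exhibits the vanishing directly from the shape of $\lambda$.
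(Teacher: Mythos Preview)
Your argument via the semistandard tableau expansion is correct and complete: the vanishing in part~(1) follows because a shape with $N-a>N-b$ nonzero rows admits no column-strict filling by $\{1,\dots,N-b\}$, and the identity in part~(2) follows because the trailing zero rows of $\tilde\lambda$ contribute no cells, so the tableau sums coincide. The paper itself does not prove this lemma but simply cites Lemmas~3.1 and~3.2 of \cite{zl202}, so there is no in-paper argument to compare against; your self-contained SSYT proof is the standard route and is arguably preferable to a citation for such an elementary fact.
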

\begin{proof}See Lemmas 3.1 and 3.2 in \cite{zl202}.
\end{proof}

For $i\in[n]$, let
\begin{align*}
\Lambda^{(i,\si)}:=\{\lambda_j^{(l)}:x_{\si(j),\epsilon}\ \mathrm{is\ the\ }i\mathrm{th\ greatest\ element\ in\ }\{x_{\si_0(1),\epsilon},x_{\si_0(2),\epsilon},\ldots,x_{\si_0(N_{L,-}^{(\epsilon)}),\epsilon}\}\},
\end{align*}
and let $\lambda^{(i,\sigma)}$ be the partition formed by elements in $\Lambda^{(i,\si)}$.

Let
\begin{align*}
\mathbf{u}^{(L,-,>t,i)}=&\left\{u_j:x_{j,\epsilon}\ \mathrm{is\ the\ }i\mathrm{th\ greatest\ element\ in\ } \{x_{\si_0(1),\epsilon},x_{\si_0(1),\epsilon},\ldots,x_{\si_0(N_{L,-}^{(\epsilon),\epsilon}),\epsilon}\},\right.\\
&\left.a_j=L,b_j=-,t+1\leq j\leq r\right\}\\
\mathbf{x}^{(L,-,\leq t,i)}=&\left\{x_{j,\epsilon}:x_{j,\epsilon}\ \mathrm{is\ the\ }i\mathrm{th\ greatest\ element\ in\ } \{x_{\si_0(1),\epsilon},x_{\si_0(1),\epsilon},\ldots,x_{\si_0(N_{L,-}^{(\epsilon),\epsilon}),\epsilon}\},\right.\\
&\left.a_j=L,b_j=-,l\leq j\leq t\right\}\\
\mathbf{x}^{(L,-,i)}=&\left\{x_{j,\epsilon}:x_{j,\epsilon}\ \mathrm{is\ the\ }i\mathrm{th\ greatest\ element\ in\ } \{x_{\si_0(1),\epsilon},x_{\si_0(1),\epsilon},\ldots,x_{\si_0(N_{L,-}^{(\epsilon),\epsilon}),\epsilon}\},\right.\\
&\left.a_j=L,b_j=-,l\leq j\leq r\right\}
\end{align*}

By (\ref{pf}),(\ref{pf1}), we obtain
\begin{align*}
\frac{s_{\lambda^{(l)}}\left(\mathbf{u}^{(L,-,> t)},\mathbf{x}^{(L,-,\leq t)}\right)}{s_{\lambda^{(l)}}(\mathbf{x}^{(L,-)})}
=&\left(1+e^{-C\epsilon^{-1}}\right)
\prod_{i=1}^{n}
\frac{s_{\phi^{(i,\sigma_0)}}\left(\mathbf{u}^{(L,-,>t,i)},\mathbf{x}^{(L,-,\leq t,i)}\right)}{s_{\phi^{(i,\sigma_0)}}\left(\mathbf{x}^{(L,-,i)}\right)}\\
&\times\prod_{j\in[N_{L,-}^{(\epsilon)}]:\sigma_0(j)\in[t+1..r]}\left(\frac{x_{\si_0(j),\epsilon}}{u_{\si_0(j)}}\right)^{\eta_j^{\si_0}(N_{L,-}^{(\epsilon)})}\notag,\\
=&\left(1+e^{-C\epsilon^{-1}}\right)
\prod_{i=1}^{n}
\frac{s_{\lambda^{(i,\sigma_0)}}\left(\mathbf{u}^{(L,-,>t,i)},\mathbf{x}^{(L,-,\leq t,i)}\right)}{s_{\lambda^{(i,\sigma_0)}}\left(\mathbf{x}^{(L,-,i)}\right)}
\end{align*}

 Then we can compute the Schur generating function as follows
\begin{align}
&\mathcal{S}_{\rho^t,\mathbf{x}^{(L,-,> t)}}(\mathbf{u}^{(L,-,> t)})\label{sg2}\\
&=(1+e^{-C\mathbf{\epsilon^{-1}}})
\prod_{i=1}^{n}
\frac{s_{\lambda^{(i,\sigma_0)}}\left(\mathbf{u}^{(L,-,>t,i)},\mathbf{x}^{(L,-,\leq t,i)}\right)}{s_{\lambda^{(i,\sigma_0)}}\left(\mathbf{x}^{(L,-,i)}\right)}
\prod_{i\in[l..t],b_i=+;}\prod_{j\in[t+1..r],a_j=L,b_j=-;x_{j,\epsilon}=x_{\si_0(1)},\epsilon}\frac{\xi_{ij}}{z_{ij}}.\notag
\end{align}

The equation (\ref{sg2}) expresses the leading term of the Schur generating function as a product of factors indexed by $i$ $(1\le i\le n)$. In Lemma \ref{l46} below, we show that each $i$-indexed factor can be written as the Schur generating function with respect to the probability measure on dimer coverings with modified left boundary conditions and edge weights.

\begin{lemma}\label{l46}
\begin{align}
&\frac{s_{\lambda^{(1,\sigma)}}\left(\mathbf{u}^{(L,-,>t,1)},\mathbf{x}^{(L,-,\leq t,1)}\right)}{s_{\lambda^{(1,\sigma)}}\left(\mathbf{x}^{(L,-,1)}\right)}
\prod_{i\in[l..t],b_i=+;}\prod_{j\in[t+1..r],a_j=L,b_j=-;x_{j,\epsilon}=x_{\si_0(1)},\epsilon}\frac{\xi_{ij}}{z_{ij}}\label{e461}\\
&=\sum_{\mu\in \YY_{N_{L,-}^{(>t,\epsilon)}}}\tilde{\rho}^t_{\lambda^{(1,\si)}}(\mu)
\frac{s_{\mu}(\underline{u}^{(1)},0^{N_{L,-}^{(>t,\epsilon)}-N_{L,-,1}^{(>t,\epsilon)}})}{s_{\mu}(1,\ldots,1,0^{N_{L,-}^{(>t,\epsilon)}-N_{L,-,1}^{(>t,\epsilon)}})}\notag
\end{align}

For each $i\in[2..n]$,
\begin{align}
&\frac{s_{\lambda^{(i,\sigma)}}\left(\mathbf{u}^{(L,-,>t,i)},\mathbf{x}^{(L,-,\leq t,i)}\right)}{s_{\lambda^{(i,\sigma)}}\left(\mathbf{x}^{(L,-,i)}\right)}=\sum_{\mu\in\YY_{N_{L,-}^{(>t,\epsilon)}}}\tilde{\rho}^{t}_{\lambda^{(i,\si)}}(\mu)\frac{s_{\mu}(\underline{u}^{(i)},0^{N_{L,-}^{(>t,\epsilon)}-N_{L,-,i}^{(>t,\epsilon)}})}{s_{\mu}(1,\ldots,1,0^{N_{L,-}^{(>t,\epsilon)}-N_{L,-,i}^{(>t,\epsilon)}})}\label{e462}
\end{align}
where  
\begin{enumerate}
\item 
\begin{align*}
N_{L,-}^{(>t,\epsilon)}&=|\{j\in [t+1..r^{(\epsilon)}]:a_j=L,b_j=-\}|\\
N_{L,-,i}^{(>t,\epsilon)}&=|\{j\in [t+1..r^{(\epsilon)}]:a_j=L,b_j=-,\\
&x_{j,\epsilon}\ \mathrm{is\ the}\ i\mathrm{th}\ \mathrm{greatest\ element\ in}\ \{x_{\si(1),\epsilon},x_{\si(2),\epsilon},\ldots,x_{\si(N_{L,-}^{(\epsilon)}),\epsilon}\}\}|
\end{align*}
\item $\tilde{\rho}^{t}_{\lambda^{(1,\si)}}$ is the probability measure for the random partition corresponding to dimer configurations incident to odd vertices with abscissa $2t-1$ on a rail yard graph RYG$(l,r,\underline{a},\underline{b})$, left boundary condition given by 
\begin{align*}
\tilde{\lambda}^{(1,\si)}=(\lambda^{(1,\si)},0,\ldots,0),\ \mathrm{s.t.}\ l(\tilde{\lambda}^{(1,\si)})=N_{L,-}^{(\epsilon
)}
\end{align*}
and right boundary condition given by the empty partition; and the weight $\tilde{x}_{j,\epsilon}^{(1)}$ of diagonal edges incident to the column of even vertices with abscissa $2j$ given by
\begin{enumerate}
\item $x_{j,\epsilon}$, if one of the following two conditions holds
\begin{enumerate}
\item $(a_j,b_j)=(L,-)$, and $x_{j,\epsilon}=x_{\si_0(1),\epsilon}$; or
\item $(a_j,b_j)\neq(L,-)$
\end{enumerate}
\item $0$, otherwise.
\end{enumerate}
\item If $i\in[2..n]$,
$\tilde{\rho}^{t}_{\lambda^{(i,\si)}}$  
is the probability measure for the random partition  corresponding to dimer configurations incident to the  column of odd vertices with abscissa $2t-1$ labeled by $(L,-)$, counting from the right on a rail yard graph RYG$(l,r,(L,L,L,\ldots,L),\underline{b})$, left boundary condition given by 
\begin{align*}
\tilde{\lambda}^{(i,\si)}=(\lambda^{(i,\si)},0,\ldots,0),\ \mathrm{s.t.}\ l(\tilde{\lambda}^{(i,\si)})=N_{L,-}^{(\epsilon
)}
\end{align*}
and right boundary condition given by the empty condition; and the weight $\tilde{x}_{j,\epsilon}^{(i)}$ of diagonal edges incident to the column of even vertices with abscissa $2j$ given by
\begin{enumerate}
\item $x_{j,\epsilon}$, if $a_j=L$, $b_j=-$; and $x_{j,\epsilon}$ is the $i$th greatest element in $\{x_{\si_0(1),\epsilon},x_{\si_0(2),\epsilon},x_{\si_0(N_{L,-}^{(\epsilon)}),\epsilon}\}$
\item $0$, otherwise.
\end{enumerate}
\end{enumerate}
\end{lemma}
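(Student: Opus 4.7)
The plan is to apply Lemma \ref{lm212} to each modified rail-yard graph carrying $\tilde{\rho}^t_{\lambda^{(i,\sigma)}}$, specialize the free Schur variables so that entries corresponding to zero-weight columns vanish, and then invoke Lemma \ref{l212} to collapse the resulting Schur functions with zero arguments down to Schur functions of $\lambda^{(i,\sigma)}$, thereby matching the left-hand sides of \eqref{e461} and \eqref{e462}.

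Concretely, for each $i\in[n]$ I would apply Lemma \ref{lm212} to the graph defining $\tilde{\rho}^t_{\lambda^{(i,\sigma)}}$ in the formal variables $\mathbf{v}=\{v_q\}_{q>t,\,a_q=L,\,b_q=-}$, then specialize by setting $v_q=0$ whenever $\tilde x_{q,\epsilon}^{(i)}=0$ and $v_q=u_q$ otherwise. Two simplifications follow. First, every factor $\tilde\xi_{pq}/\tilde z_{pq}$ in the Lemma \ref{lm212} product with $\tilde x_{q,\epsilon}^{(i)}=0$ trivializes, since $w_q=v_q=0$ forces $\tilde\xi_{pq}=1=\tilde z_{pq}$. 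For $i\geq 2$ every $\tilde x_{p,\epsilon}^{(i)}$ with $b_p=+$ is also zero, so the entire product is identically $1$; for $i=1$ it collapses to the product of $\xi_{pq}/z_{pq}$ over pairs with $x_{q,\epsilon}=x_{\sigma_0(1),\epsilon}$ that appears on the LHS of \eqref{e461}. Second, $\tilde{\lambda}^{(i,\sigma)}$ has $N_{L,-}^{(\epsilon)}-N_{L,-,i}^{(\epsilon)}$ trailing zeros, and a direct count shows that this equals the number of zero arguments in both numerator and denominator after specialization; hence Lemma \ref{l212}(2) reduces the Schur ratio to $s_{\lambda^{(i,\sigma)}}(\mathbf{u}^{(L,-,>t,i)},\mathbf{x}^{(L,-,\leq t,i)})/s_{\lambda^{(i,\sigma)}}(\mathbf{x}^{(L,-,i)})$. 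Finally, since every nonzero entry of $\tilde{\mathbf{x}}^{(i,L,-,>t)}$ equals a single constant $c$ (the $i$-th greatest weight), the homogeneity identity $s_\mu(cy_1,\ldots,cy_k,0,\ldots,0)=c^{|\mu|}s_\mu(y_1,\ldots,y_k,0,\ldots,0)$ cancels $c^{|\mu|}$ between numerator and denominator, rewriting the specialized Schur generating function as a sum over $\mu\in\YY_{N_{L,-}^{(>t,\epsilon)}}$ in the normalized variables $\underline u^{(i)}$ with reference point $(1,\ldots,1,0,\ldots,0)$---precisely the RHS of \eqref{e461}/\eqref{e462}. By Lemma \ref{l212}(1) only $\mu$ with at most $N_{L,-,i}^{(>t,\epsilon)}$ nonzero parts survive the sum, consistent with what the modified graph can produce.

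The main obstacle will be the case $i=1$: the $b_p=+$ weights are not zeroed out, so one must verify that the subfactors of $\prod\tilde\xi_{pq}/\tilde z_{pq}$ surviving the specialization---those with $x_{q,\epsilon}=x_{\sigma_0(1),\epsilon}$, whose modified weights coincide with the original weights so that $\tilde\xi_{pq}/\tilde z_{pq}=\xi_{pq}/z_{pq}$---reproduce exactly the product written on the LHS of \eqref{e461}. The remaining bookkeeping, namely tracking the zero counts in each Schur function to apply Lemma \ref{l212}, is routine once this matching is in place.
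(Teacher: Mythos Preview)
Your proposal is correct and follows essentially the same strategy as the paper. The paper proves the identity by writing $\tilde{\rho}^t_{\lambda^{(i,\sigma)}}(\mu)$ explicitly as a ratio of Fock-space inner products via Lemma~\ref{l13}, evaluating the full partition function and the factor $\langle\mu|\Gamma\cdots|\emptyset\rangle$ through Corollary~\ref{lc13}(1), and then summing over $\mu$ so that the resulting expression collapses to the Schur ratio on the left-hand side. You instead invoke Lemma~\ref{lm212} directly on the modified graph and then specialize the free variables; since Lemma~\ref{lm212} is itself obtained from Lemma~\ref{l13} and Corollary~\ref{lc13}, the two arguments unwind to the same computation, only packaged at different levels. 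Your identification of the $i=1$ product as the nontrivial bookkeeping step is accurate, and the use of Lemma~\ref{l212} to strip the zero arguments matches the paper's first step exactly.
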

\begin{proof}By Lemma \ref{l212}, 
\begin{align*}
\frac{s_{\lambda^{(i,\sigma)}}\left(\mathbf{u}^{(L,-,>t,i)},\mathbf{x}^{(L,-,\leq t,i)}\right)}{s_{\lambda^{(i,\sigma)}}\left(\mathbf{x}^{(L,-,i)}\right)}=\frac{s_{\lambda^{(i,\sigma)}}\left(\underline{u}^{(i)},1,\ldots,1\right)}{s_{\lambda^{(i,\sigma)}}\left(1,\ldots,1\right)}=\frac{s_{\tilde{\lambda}^{(i,\sigma)}}\left(\underline{u}^{(i)},1,\ldots,1,0,\ldots,0\right)}{s_{\tilde{\lambda}^{(i,\sigma)}}\left(1,\ldots,1,0,\ldots,0\right)}
\end{align*}
By Lemma \ref{l13} we obtain
\begin{align*}
\tilde{\rho}^t_{\lambda^{(1,\si)}}(\mu)&=\frac{\langle\tilde{\lambda}^{(1,\sigma)}| \Gamma_{a_lb_l}(\tilde{x}_{l,\epsilon}^{(1)})\cdots \Gamma_{a_{t}b_{t}}(\tilde{x}_{t,\epsilon}^{(1)})|\mu \rangle
\langle\mu| \Gamma_{a_{t+1}b_{t+1}}(\tilde{x}_{t+1,\epsilon}^{(1)})\cdots \Gamma_{a_{r}b_{r}}(\tilde{x}_{r,\epsilon}^{(1)})|\emptyset \rangle}{\langle\tilde{\lambda}^{(1,\sigma)}| \Gamma_{a_lb_l}(\tilde{x}_{l,\epsilon}^{(1)})\Gamma_{a_{l+1}b_{l+1}}(\tilde{x}_{l+1,\epsilon}^{(1)})\cdots \Gamma_{a_rb_r}(\tilde{x}_{r,\epsilon}^{(1)})|\emptyset \rangle}
\end{align*}
By Lemma \ref{l12} and Corollary \ref{lc13}(1), we have 
\begin{align*}
&\langle\tilde{\lambda}^{(1,\sigma)}| \Gamma_{a_lb_l}(\tilde{x}_{l,\epsilon}^{(1)})\Gamma_{a_{l+1}b_{l+1}}(\tilde{x}_{l+1,\epsilon}^{(1)})\cdots \Gamma_{a_rb_r}(\tilde{x}_{r,\epsilon}^{(1)})|\emptyset \rangle\\
&=s_{\tilde{\lambda}^{(1,\si)}}\left(\mathbf{x}^{(L,-,1)},0,\ldots,0\right)\prod_{l\leq i<j\leq r;b_i=+,b_j=-,x_{j,\epsilon}=x_{\sigma_0(1),\epsilon}}z_{i,j}\\
&=s_{\lambda^{(1,\si)}}\left(\mathbf{x}^{(L,-,1)}\right)\prod_{l\leq i<j\leq r;b_i=+,b_j=-,x_{j,\epsilon}=x_{\sigma_0(1),\epsilon}}z_{i,j}
\end{align*}
and
\begin{align*}
&\langle\mu| \Gamma_{a_{t+1}b_{t+1}}(\tilde{x}_{t+1,\epsilon}^{(1)})\cdots \Gamma_{a_{r}b_{r}}(\tilde{x}_{r,\epsilon}^{(1)})|\emptyset \rangle\\
&=s_{\mu}\left(\mathbf{x}^{(L,-,>t,1)},0,\ldots,0\right)\prod_{t+1\leq i<j\leq r;b_i=+,b_j=-,x_{j,\epsilon}=x_{\sigma_0(1),\epsilon}}z_{i,j}.
\end{align*}
Hence we have
\begin{align*}
&\sum_{\mu\in \YY_{N_{L,-}^{(>t,\epsilon)}}}\tilde{\rho}^t_{\lambda^{(1,\si)}}(\mu)
\frac{s_{\mu}(\underline{u}^{(1)},0^{N_{L,-}^{(>t,\epsilon)}-N_{L,-,1}^{(>t,\epsilon)}})}{s_{\mu}(1,\ldots,1,0^{N_{L,-}^{(>t,\epsilon)}-N_{L,-,1}^{(>t,\epsilon)}})}\\
&=\sum_{\mu\in \YY_{N_{L,-}^{(>t,\epsilon)}}}
\frac{s_{\mu}(\mathbf{u}^{(L,-,>t,1)},0^{N_{L,-}^{(>t,\epsilon)}-N_{L,-,1}^{(>t,\epsilon)}})}{s_{\mu}(\mathbf{x}^{(L,-,>t,1)},0^{N_{L,-}^{(>t,\epsilon)}-N_{L,-,1}^{(>t,\epsilon)}})}
\\
&\times\frac{\langle\tilde{\lambda}^{(1,\sigma)}| \Gamma_{a_lb_l}(\tilde{x}_{l,\epsilon}^{(1)})\cdots \Gamma_{a_{t}b_{t}}(\tilde{x}_{t,\epsilon}^{(1)})|\mu \rangle
s_{\mu}\left(\mathbf{x}^{(L,-,>t,1)},0,\ldots,0\right)\prod_{t+1\leq i<j\leq r;b_i=+,b_j=-,x_{j,\epsilon}=x_{\sigma_0(1),\epsilon}}z_{i,j}}{s_{\lambda^{(1,\si)}}\left(\mathbf{x}^{(L,-,1)}\right)\prod_{l\leq i<j\leq r;b_i=+,b_j=-,x_{j,\epsilon}=x_{\sigma_0(1),\epsilon}}z_{i,j}}\\ 
&=\sum_{\mu\in \YY_{N_{L,-}^{(>t,\epsilon)}}}
\\
&\times\frac{\langle\tilde{\lambda}^{(1,\sigma)}| \Gamma_{a_lb_l}(\tilde{x}_{l,\epsilon}^{(1)})\cdots \Gamma_{a_{t}b_{t}}(\tilde{x}_{t,\epsilon}^{(1)})|\mu \rangle
s_{\mu}\left(\mathbf{u}^{(L,-,>t,1)},0,\ldots,0\right)\prod_{t+1\leq i<j\leq r;b_i=+,b_j=-,x_{j,\epsilon}=x_{\sigma_0(1),\epsilon}}z_{i,j}}{s_{\lambda^{(1,\si)}}\left(\mathbf{x}^{(L,-,1)}\right)\prod_{l\leq i<j\leq r;b_i=+,b_j=-,x_{j,\epsilon}=x_{\sigma_0(1),\epsilon}}z_{i,j}}\\
&=\sum_{\mu\in \YY_{N_{L,-}^{(>t,\epsilon)}}}\frac{\langle\tilde{\lambda}^{(1,\sigma)}| \Gamma_{a_lb_l}(\tilde{x}_{l,\epsilon}^{(1)})\cdots \Gamma_{a_{t}b_{t}}(\tilde{x}_{t,\epsilon}^{(1)})|\mu \rangle
\langle\mu| \Gamma_{a_{t+1}b_{t+1}}(\tilde{u}_{t+1,\epsilon}^{(1)})\cdots \Gamma_{a_{r}b_{r}}(\tilde{u}_{r,\epsilon}^{(1)})|\emptyset \rangle}{\langle\tilde{\lambda}^{(1,\sigma)}| \Gamma_{a_lb_l}(\tilde{x}_{l,\epsilon}^{(1)})\Gamma_{a_{l+1}b_{l+1}}(\tilde{x}_{l+1,\epsilon}^{(1)})\cdots \Gamma_{a_rb_r}(\tilde{x}_{r,\epsilon}^{(1)})|\emptyset \rangle}\\
&\prod_{l\leq i<j\leq r;b_i=+,b_j=-,x_{j,\epsilon}=x_{\sigma_0(1),\epsilon}}\frac{z_{i,j}}{\xi_{i,j}}
\end{align*}
where for $i\in[t+1..r^{(\epsilon)}]$
\begin{align*}
\tilde{u}_{i,\epsilon}^{(1)}=\begin{cases}u_{i,\epsilon};&\mathrm{if}\ x_{i,\epsilon}=x_{\si_0(1,),\epsilon}\\0;&\mathrm{otherwise}.\end{cases}
\end{align*}

Then we have
\begin{align*}
&\sum_{\mu\in \YY_{N_{L,-}^{(>t,\epsilon)}}}\tilde{\rho}^t_{\lambda^{(1,\si)}}(\mu)
\frac{s_{\mu}(\underline{u}^{(1)},0^{N_{L,-}^{(>t,\epsilon)}-N_{L,-,1}^{(>t,\epsilon)}})}{s_{\mu}(1,\ldots,1,0^{N_{L,-}^{(>t,\epsilon)}-N_{L,-,1}^{(>t,\epsilon)}})}\\
&=\frac{\langle\tilde{\lambda}^{(1,\sigma)}| \Gamma_{a_lb_l}(\tilde{x}_{l,\epsilon}^{(1)})\cdots \Gamma_{a_tb_t}(\tilde{x}_{t,\epsilon}^{(1)})
\Gamma_{a_{t+1}b_{t+1}}(\tilde{u}_{t+1,\epsilon}^{(1)})\cdots \Gamma_{a_rb_r}(\tilde{u}_{r,\epsilon}^{(1)})
|\emptyset \rangle}{\langle\tilde{\lambda}^{(1,\sigma)}| \Gamma_{a_lb_l}(\tilde{x}_{l,\epsilon}^{(1)})\Gamma_{a_{l+1}b_{l+1}}(\tilde{x}_{l+1,\epsilon}^{(1)})\cdots \Gamma_{a_rb_r}(\tilde{x}_{r,\epsilon}^{(1)})|\emptyset \rangle}\prod_{l\leq i<j\leq r;b_i=+,b_j=-,x_{j,\epsilon}=x_{\sigma_0(1),\epsilon}}\frac{z_{i,j}}{\xi_{i,j}}
\end{align*}
Then (\ref{e461}) follows.

The identity (\ref{e462}) can be proved similarly.
\end{proof}

By (\ref{sg2}), (\ref{e461}) and (\ref{e462}), we have
\begin{align}
&\mathcal{S}_{\rho^t,\mathbf{x}^{(L,-,> t)}}(\mathbf{u}^{(L,-,> t)})\label{sg1}\\
&=(1+e^{-C\mathbf{\epsilon^{-1}}})
\prod_{i=1}^{n}\left(\sum_{\mu\in\YY_{N_{L,-}^{(>t,\epsilon)}}}\tilde{\rho}^{t}_{\lambda^{(i,\si_0)}}(\mu)\frac{s_{\mu}(\underline{u}^{(i)},0^{N_{L,-}^{(>t,\epsilon)}-N_{L,-,i}^{(>t,\epsilon)}})}{s_{\mu}(1,\ldots,1,0^{N_{L,-}^{(>t,\epsilon)}-N_{L,-,i}^{(>t,\epsilon)}})}\right)\notag
\end{align}

By Corollaries \ref{lc13}(1), \ref{p438} and (\ref{gep}), we have
\begin{align}
\label{pt}Z_{\lambda^{(l)},\emptyset}(G;\mathbf{x})
&=(1+e^{-C\mathbf{\epsilon^{-1}}})
\left(s_{\lambda^{(1,\sigma_0)}}\left(\mathbf{x}^{(L,-,1)}\right)\prod_{l\leq i<j\leq r;b_i=+,b_j=-,x_{j,\epsilon}=x_{\si_0}(1)}z_{i,j}\right)\\
&\times\prod_{i=2}^{n}
s_{\lambda^{(i,\sigma_0)}}\left(\mathbf{x}^{(L,-,i)}\right)\notag
\end{align}

In Lemma \ref{l46}, we constructed rail-yard graphs RYG$(l,r,\underline{a},\underline{b})$ such that partition functions of perfect matchings on these subgraphs are 
\begin{equation}
s_{\lambda^{(1,\sigma)}}\left(\mathbf{x}^{(L,-,1)}\right)\prod_{l\leq i<j\leq r;b_i=+,b_j=-,x_{j,\epsilon}=x_{\si_0}(1)}z_{i,j}\label{si1}
\end{equation}
and 
\begin{align}
s_{\lambda^{(i,\sigma)}}\left(\mathbf{x}^{(L,-,i)}\right);\qquad \forall i\in[2..n]\label{si2n}
\end{align}
respectively.

Now we discuss (\ref{si1}); which is equal to
\begin{align}
s_{\tilde{\lambda}^{(1,\sigma)}}\left(\mathbf{x}^{(L,-,1)},0^{N_{L,-}^{(\epsilon)}-N_{L,-,i}^{(\epsilon)}}\right)\prod_{l\leq i<j\leq r;b_i=+,b_j=-,x_{j,\epsilon}=x_{\si_0}(1)}z_{i,j}\label{si11}
\end{align}
Consider the weighted rail yard graph given as in Lemma \ref{l46}(2). Then (\ref{si1}), or equivalently (\ref{si11}), gives the partition function of dimer configurations on such a graph with left boundary condition given by $\tilde{\lambda}^{(1,\si)}$ and right boundary condition given by the empty partition. Let $\mathcal{M}_{1,\si}$ be the set consisting of all the dimer configurations on such a graph with given boundary conditions. For each $M_{1,\si}\in \mathcal{M}_{1,\si}$, let
\begin{align*}
\tilde{\lambda}^{(1,\si)}(=\nu^{(M_{1,\si},l)}),\nu^{(M_{1,\si},l-1)},\ldots,\nu^{(M_{1,\si},r-1)}(=\emptyset)
\end{align*}
be the sequence of partitions corresponding to $M_{1,\si}$. For each $t\in [l..r]$, recall that $l_{+}(\nu^{M_{1,\si},t})$ is the total number of nonzero parts in $\nu^{(M_{1,\si},t)}$. 

Assume $2\leq i\leq n$, then (\ref{si2n}) is equal to
\begin{align}
s_{\tilde{\lambda}^{(i,\sigma)}}\left(\mathbf{x}^{(L,-,1)},0^{N_{L,-}^{(\epsilon)}-N_{L,-,i}^{(\epsilon)}}\right)\label{si12}
\end{align}
Consider the weighted rail yard graph given as in Lemma \ref{l46}(3). Then (\ref{si2n}), or equivalently (\ref{si12}), gives the partition function of dimer configurations on such a graph with left boundary condition given by $\tilde{\lambda}^{(i,\si)}$ and right boundary condition given by the empty partition. Let $\mathcal{M}_{i,\si}$ be the set consisting of all the dimer configurations on such a graph with given boundary conditions. For each $M\in \mathcal{M}_{i,\si}$, let
\begin{align*}
\tilde{\lambda}^{(i,\si)}(=\nu^{(M_{i,\si},l)}),\nu^{(M_{i,\si},l-1)},\ldots,\nu^{(M_{i,\si},r-1)}(=\emptyset)
\end{align*}
be the sequence of partitions corresponding to $M_{i,\si}$. 

Then it is straightforward to check the following lemma
\begin{lemma}\label{le409}Assume $2\leq i\leq n$, we have
\begin{align*}
\left|\nu^{(M_{i,\si},t)}_1\right|\leq \tilde{\lambda}^{(i,\si)}_1\leq \lambda^{(l)}_1
\end{align*}
\end{lemma}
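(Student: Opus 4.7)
The plan is to exploit the structure of the auxiliary subgraph built in Lemma \ref{l46}(3) together with the interlacing enforced by the $\Gamma_{L\pm}$ operators. In that subgraph the $LR$-sequence is constant equal to $L$, and the edge weight $\tilde{x}_{j,\epsilon}^{(i)}$ vanishes unless $(a_j,b_j)=(L,-)$ in the original data \emph{and} $x_{j,\epsilon}$ is the $i$-th greatest element in the relevant weight multiset. So first I would observe that whenever the weight is $0$, the operator $\Gamma_{L\pm}(0)$ collapses to the identity: in the sum $\sum_{\mu}\,0^{\,|\lambda|-|\mu|}|\mu\rangle$ only the term with $|\mu|=|\lambda|$ survives (using $0^{0}=1$), and interlacing then forces $\mu=\lambda$. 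Consequently, at every inactive column $t$ one has $\nu^{(M_{i,\si},t)}=\nu^{(M_{i,\si},t+1)}$, i.e.\ the partition simply does not change across such columns.

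Next, at every active column $t$ the weight is nonzero and by construction $(a_t,b_t)=(L,-)$, so the non-vanishing matrix element $\langle \nu^{(M_{i,\si},t)}|\Gamma_{L-}(\tilde{x}_{t,\epsilon}^{(i)})|\nu^{(M_{i,\si},t+1)}\rangle$ forces $\nu^{(M_{i,\si},t)}\succ \nu^{(M_{i,\si},t+1)}$; in particular $\nu^{(M_{i,\si},t)}_1\geq \nu^{(M_{i,\si},t+1)}_1$. Combining active and inactive columns shows that the sequence $\nu^{(M_{i,\si},l)}_1\geq \nu^{(M_{i,\si},l+1)}_1\geq\cdots\geq \nu^{(M_{i,\si},r+1)}_1=0$ is monotonically non-increasing. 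Since the left boundary condition imposes $\nu^{(M_{i,\si},l)}=\tilde{\lambda}^{(i,\si)}$, this immediately yields $\nu^{(M_{i,\si},t)}_1\leq \tilde{\lambda}^{(i,\si)}_1$ for every $t$, which is the first inequality in the statement.

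Finally, by definition $\tilde{\lambda}^{(i,\si)}$ is obtained from $\lambda^{(i,\si)}$ by padding with zeros, so $\tilde{\lambda}^{(i,\si)}_1=\lambda^{(i,\si)}_1$. But $\lambda^{(i,\si)}$ is formed by the subset $\Lambda^{(i,\si)}$ of the parts of $\lambda^{(l)}$ indexed by those $j$ for which $x_{\si(j),\epsilon}$ is the $i$-th greatest weight, so its largest part is bounded above by $\lambda^{(l)}_1$. This delivers the second inequality and closes the chain.

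I expect no substantive obstacle: the statement is essentially a bookkeeping consequence of the interlacing structure. The only detail that requires care is the reduction of $\Gamma_{L\pm}(0)$ to the identity, which depends on reading the $x^{|\lambda|-|\mu|}$ factor in the definition of the vertex operators with the convention $0^{0}=1$ — once this is granted, everything else is automatic.
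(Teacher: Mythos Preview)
Your argument is correct and is precisely the ``straightforward'' verification the paper alludes to without writing out: the paper gives no proof beyond the sentence introducing the lemma, and your interlacing argument via the $\Gamma_{L-}$ operators (with $\Gamma_{L\pm}(0)=\mathrm{id}$ on inactive columns) is exactly the intended check. The only cosmetic point is that the second inequality needs no work at all once one recalls that $\lambda^{(i,\si)}$ is a subsequence of the parts of $\lambda^{(l)}$, as you note.
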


Moreover, we have
\begin{lemma}Assume (\ref{c151}) holds.
\begin{align*}
l_{+}(\nu^{(M_{1,\si},t)})\leq N_{L,-,1}^{(>t,\epsilon)}
\end{align*}
\end{lemma}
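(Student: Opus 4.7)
I would view the restriction of $M_{1,\sigma}$ to the sub--rail-yard graph $G'=RYG(t+1,r,\underline{a}|_{[t+1..r]},\underline{b}|_{[t+1..r]})$ (equipped with the edge weights $\tilde{x}_{j,\epsilon}^{(1)}$ from Lemma \ref{l46}(2)) as a dimer configuration on $G'$ with left boundary partition $\nu^{(M_{1,\sigma},t)}$ and right boundary $\emptyset$, and then combine Corollary \ref{lc13}(1) with Lemma \ref{l212}(1). Since $M_{1,\sigma}$ has positive weight in the ambient graph, this restriction has positive weight in $G'$.

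\textbf{Counting positive-weight diagonal edges.} By the weight assignment of Lemma \ref{l46}(2), the multiset of $(L,-)$-diagonal edge weights on $G'$,
\begin{equation*}
\tilde{\mathbf{x}}^{(L,-,>t,1)}:=\{\tilde{x}_{j,\epsilon}^{(1)}:j\in[t+1..r],\,a_j=L,\,b_j=-\},
\end{equation*}
contains exactly $N_{L,-,1}^{(>t,\epsilon)}$ positive entries (each equal to $x_{\sigma_0(1),\epsilon}$) and $N_{L,-}^{(>t,\epsilon)}-N_{L,-,1}^{(>t,\epsilon)}$ zero entries. Applying Corollary \ref{lc13}(1) to $G'$ with boundary conditions $\nu^{(M_{1,\sigma},t)}$ on the left and $\emptyset$ on the right expresses the partition function of $G'$ as
\begin{equation*}
Z_{G'}\;=\;s_{\nu^{(M_{1,\sigma},t)}}\!\bigl(\tilde{\mathbf{x}}^{(L,-,>t,1)}\bigr)\,\prod_{\substack{t+1\le i<j\le r\\ b_i=+,\,b_j=-}}\!\!z_{ij},
\end{equation*}
and the product is strictly positive (each factor $z_{ij}$ equals $1$ if one of the two weights vanishes, and is at least $1$ under Assumption \ref{ap14}). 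Because $Z_{G'}$ is bounded below by the positive weight of the restricted dimer configuration, the Schur function on the right-hand side must be strictly positive.

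\textbf{Conclusion via the zero-parts criterion.} Pad $\nu^{(M_{1,\sigma},t)}$ with zero entries so that it becomes a length-$N_{L,-}^{(>t,\epsilon)}$ partition. Lemma \ref{l212}(1) states that a Schur polynomial $s_\lambda(x_1,\ldots,x_N)$ vanishes whenever the number of zero entries among $(x_1,\ldots,x_N)$ strictly exceeds the number of zero parts of $\lambda$ (viewed as a length-$N$ partition). Applied here with $N=N_{L,-}^{(>t,\epsilon)}$ and with $N_{L,-}^{(>t,\epsilon)}-N_{L,-,1}^{(>t,\epsilon)}$ zero entries among the arguments, the nonvanishing established in the previous step forces the padded $\nu^{(M_{1,\sigma},t)}$ to have at least $N_{L,-}^{(>t,\epsilon)}-N_{L,-,1}^{(>t,\epsilon)}$ zero parts, which is exactly $l_{+}(\nu^{(M_{1,\sigma},t)})\le N_{L,-,1}^{(>t,\epsilon)}$. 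The only subtlety, and the sole item I expect to require care, is the bookkeeping between $G$ and $G'$---in particular verifying that the restriction of $M_{1,\sigma}$ really is a valid dimer configuration on $G'$ with the advertised boundaries, which is standard for rail-yard graphs.
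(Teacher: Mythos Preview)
Your argument is correct and takes a genuinely different route from the paper's. The paper proceeds combinatorially: it first observes that zero-weight columns force $\nu^{(M_{1,\sigma},j)}=\nu^{(M_{1,\sigma},j+1)}$, then uses the commutation relations of Lemma~\ref{l12} to reorder the surviving $\Gamma$-operators so that all the $(L,-)$ ones (there are $N_{L,-,1}^{(>t,\epsilon)}$ of them) act first, producing a chain $\nu^{(M_{1,\sigma},t)}\succ\cdots\succ\emptyset$ of that length and thereby bounding the number of nonzero parts. Your approach is algebraic: you feed the restricted configuration into the partition-function formula of Corollary~\ref{lc13}(1), deduce that the relevant Schur polynomial is nonzero, and apply the contrapositive of Lemma~\ref{l212}(1). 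Your route is shorter and avoids invoking the sampling Algorithms that the paper references; the paper's route, on the other hand, makes the combinatorial mechanism (interlacing steps) explicit.

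Two minor bookkeeping remarks. First, both arguments tacitly assume $M_{1,\sigma}$ has positive weight (equivalently, lies in the support of $\tilde{\rho}^t_{\lambda^{(1,\sigma)}}$); otherwise a zero-weight diagonal edge could appear and neither the paper's ``$\nu^{(M_{1,\sigma},j)}=\nu^{(M_{1,\sigma},j+1)}$'' step nor your positivity step would go through. This is harmless since only the support matters downstream, but you should say so explicitly. Second, with the paper's indexing conventions the left boundary of $RYG(t+1,r,\ldots)$ sits at abscissa $2t+1$ and carries $\nu^{(M_{1,\sigma},t+1)}$, not $\nu^{(M_{1,\sigma},t)}$; the subgraph whose left boundary is $\nu^{(M_{1,\sigma},t)}$ is $RYG(t,r,\ldots)$. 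The paper itself is not fully consistent here (compare the statement of Lemma~\ref{l46}(2) with the placement of $\mu$ in its proof), so after a one-line index shift your argument lands exactly on the stated bound.
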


\begin{proof}Consider dimer configurations on the graph $RYG(t,r,\underline{a},\underline{b})$ with left boundary condition given by $\nu^{(M_{1,\si},t)}$, right boundary condition given by the empty partition, and edge weights satisfies Lemma \ref{l46}(2). First of all, if the edge weights satisfy $\tilde{x}_{j,\epsilon}^{(1)}=0$ for some $j$, then we must have
\begin{align*}
\nu^{(M_{1,\si},j)}=\nu^{(M_{1,\si},j+1)},\ \forall M\in \mathcal{M}_{1,\si}
\end{align*}

We may reorder the operators $\Gamma_{a_i,b_i}$ for $i\in [t..r]\setminus \{j:\tilde{x}_{j,\epsilon}^{(1)}=0\}$, to make sure that all the operators with $b_i=-$ are before all the operators with $b_i=+$. Since (\ref{c151}) holds, whenever $b_i=-$ we have $a_i=L$. After reordering the operators, the corresponding sequence of partitions satisfy
\begin{align*}
\mu^{(M,t)}\succ \tilde{\nu}^{(M,t+1)}\succ\ldots\succ \tilde{\nu}^{(M,t+N_{L,-,1}^{(>t,\epsilon)})}=\emptyset \subseteq \ldots\subseteq \emptyset
\end{align*}
Therefore the total number of nonzero parts of $\nu^{(M_{1,\si},t)}$ is at most $N_{L,-,1}^{(>t,\epsilon)}$, and the lemma follows.
\end{proof}

\begin{lemma}\label{l412}Suppose that (\ref{c151}) and Assumption \ref{ap14} hold. Assume $c$ is a constant satisfying
\begin{align}
c>\left|\frac{\log 2}{\log [\max_{i\in[l..r],b_i=+,a_i=L} x_{i}x_{\si_0(1)}]}\right|\label{clb}
\end{align}
Then for any $\delta>0$,
\begin{align}
\mathbb{P}\left(\max_{t\in[l..r],M\in \mathcal{M}_{1,\si}}\nu_1^{(M,t)}\geq c\delta [N^{(\epsilon)}]^2+N^{(\epsilon)}+\lambda_1^{(1,\si)}\right)<N^{(\epsilon)}\left(\frac{1}{2}\right)^{\delta N^{(\epsilon)}}\label{cn411}
\end{align}
\end{lemma}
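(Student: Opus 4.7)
The plan is to bound $\mathbb{P}(\nu_1^{(M_{1,\si},t)} \geq K)$ for each fixed $t \in [l..r]$ by an exponentially small quantity, and then conclude (\ref{cn411}) via a union bound over the $N^{(\epsilon)}$ values of $t$. I would start from Lemma \ref{l13} to write, for each partition $\mu$,
\[
\mathbb{P}(\nu^{(M_{1,\si},t)} = \mu) = \frac{\langle \tilde\lambda^{(1,\si)} | \Gamma_{a_lb_l}(\tilde x_{l,\epsilon}^{(1)}) \cdots \Gamma_{a_tb_t}(\tilde x_{t,\epsilon}^{(1)}) | \mu \rangle \cdot \langle \mu | \Gamma_{a_{t+1}b_{t+1}}(\tilde x_{t+1,\epsilon}^{(1)}) \cdots \Gamma_{a_rb_r}(\tilde x_{r,\epsilon}^{(1)}) | \emptyset \rangle}{\langle \tilde\lambda^{(1,\si)} | \Gamma_{a_lb_l}(\tilde x_{l,\epsilon}^{(1)}) \cdots \Gamma_{a_rb_r}(\tilde x_{r,\epsilon}^{(1)}) | \emptyset \rangle},
\]
and then use the commutation relations of Lemma \ref{l12}, together with Corollary \ref{lc13}(1) and Lemma \ref{l212} (to discard the zero-weight slots arising from $\tilde x_{j,\epsilon}^{(1)} = 0$), to recast numerator and denominator as products of (skew) Schur functions evaluated on $\mathbf{x}^{(L,-,>t,1)}$ and $\mathbf{x}^{(L,-,\leq t,1)}$, multiplied by $z_{i,j}$ prefactors.

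The core of the argument is a Schur tail estimate: for each fixed $t$,
\[
\mathbb{P}(\nu_1^{(M_{1,\si},t)} \geq K) \;\leq\; q^{(K - \lambda_1^{(1,\si)} - N^{(\epsilon)})/N^{(\epsilon)}},
\]
where $q = \max_{i \in [l..r],\, a_i = L,\, b_i = +} x_{i,\epsilon}\, x_{\si_0(1),\epsilon}$, with $q < 1$ by Assumption \ref{ap14}. The quantity $\lambda_1^{(1,\si)} + N^{(\epsilon)}$ is the \emph{baseline} first part reachable by interlacing moves of size one through each of the $N^{(\epsilon)}$ columns starting from the initial partition $\tilde\lambda^{(1,\si)}$; any excess over this baseline must come from commutations of the type $\Gamma_{L,+}(x_i) \cdot \Gamma_{L,-}(x_{\si_0(1),\epsilon})$, which by Lemma \ref{l12} produce factors $(1 - x_i x_{\si_0(1),\epsilon})^{-1}$ in the numerator of the operator expansion. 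A pigeonhole argument distributes the $K - \lambda_1^{(1,\si)} - N^{(\epsilon)}$ excess boxes across the $N^{(\epsilon)}$ columns, forcing at least $(K - \lambda_1^{(1,\si)} - N^{(\epsilon)})/N^{(\epsilon)}$ such factors to appear, each contributing a bound $x_i x_{\si_0(1),\epsilon} \leq q$ after normalization.

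Substituting $K = c\delta[N^{(\epsilon)}]^2 + N^{(\epsilon)} + \lambda_1^{(1,\si)}$ yields exponent $c\delta N^{(\epsilon)}$, and the hypothesis (\ref{clb}) on $c$ gives $q^{c\delta N^{(\epsilon)}} < (1/2)^{\delta N^{(\epsilon)}}$. A union bound over $t \in [l..r]$ then produces the estimate (\ref{cn411}). The main obstacle is turning the ``baseline plus excess'' intuition into a rigorous Schur-function inequality with the correct exponent; this requires careful bookkeeping of the $\Gamma_+$--$\Gamma_-$ commutations using Lemma \ref{l12}, and attention must be paid to the $\Gamma_{R,+}$ operators, which modify the partition by vertical strips and affect the first part by at most one per step so they must be handled without double-counting. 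A possible cleaner route is to combine the preceding lemma's length bound $l_+(\nu^{(M_{1,\si},t)}) \leq N_{L,-,1}^{(>t,\epsilon)}$ with a Jacobi--Trudi expansion of $s_\mu(\mathbf{x}^{(L,-,>t,1)})$ to directly estimate the determinantal formula when $\mu_1 \geq K$.
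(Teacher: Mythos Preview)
Your proposal captures the same mechanism as the paper's proof: growth of $\nu_1$ beyond the baseline $\lambda_1^{(1,\si)}+N^{(\epsilon)}$ can only come from horizontal-strip moves at $(L,+)$ columns (the $\Gamma_{R,+}$ steps increase $\nu_1$ by at most $1$ each), and this excess forces a large total degree in the geometric expansion of the factors $z_{ij}=(1-x_ix_{\si_0(1)})^{-1}$; a pigeonhole over the $(L,+)$ indices then yields the exponential tail.

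Two differences are worth noting. First, the paper does \emph{not} perform a union bound over $t$: it works directly with $\mathcal{Z}_{1,+}=\sum_{M:\,\exists t}w(M)$, since for any such $M$ the total $(L,+)$ box count $A_2$ already satisfies $A_2\geq c\delta[N^{(\epsilon)}]^2$ regardless of which $t$ realizes the maximum. The prefactor $N^{(\epsilon)}$ in the final bound therefore arises from the pigeonhole step over $(L,+)$ column indices $i$ (some $i$ must carry degree $\geq c\delta N^{(\epsilon)}$), not from a union over $t$. In your scheme the pigeonhole step still requires a union over $i$, so combining it with a separate union over $t$ would produce $[N^{(\epsilon)}]^2$ rather than the stated $N^{(\epsilon)}$; you should drop the outer union over $t$. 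Second, the paper bypasses the operator commutations altogether by invoking the closed partition-function formula (\ref{si1}): it identifies $\mathcal{Z}_{1,+}$ with the partial sum of the monomial expansion of $\prod z_{ij}$ whose total $(L,+)$ degree is at least $c\delta[N^{(\epsilon)}]^2$, which makes the ratio $\mathcal{Z}_{1,+}/\mathcal{Z}_1$ immediate and removes the ``main obstacle'' you flag. Your Jacobi--Trudi alternative is not needed.
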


\begin{proof}Let $M\in \mathcal{M}_{1,\si}$ be arbitrary. For each $j\in [l..r-1]$, the case 
\begin{align*}
\nu^{(M,j+1)}_1>\nu^{(M,j)}_1
\end{align*}
can occur only when either $\nu^{(M,j)}\prec \nu^{(M,j+1)}$ or $\nu^{(M,j)}\prec'\nu^{(M,j+1)}$. Moreover, if $\nu^{(M,j)}\prec'\nu^{(M,j+1)}$, we have
\begin{align}
\nu^{(M,j+1)}_1\leq \nu^{(M,j)}_1+1.\label{s1}
\end{align}
Consider all the $M\in \mathcal{M}_{1,\si}$ such that  there exists $t\in[l..r]$ such that 
\begin{align}
\nu_1^{(M,t)}\geq c\delta[N^{(\epsilon)}]^2+N^{(\epsilon)}+\lambda_1^{(1,\si)}\label{css}
\end{align}
and let $\mathcal{Z}_{1,+}$ be the partition function (weighted sum) of all such $M$'s satisfying (\ref{css}). Let $\mathcal{Z}_1$ be the partition function of all dimer configurations in $\mathcal{M}_{1,\si}$. Note that $\mathcal{Z}_1$ is given by (\ref{si1}). Then
\begin{align*}
\mathbb{P}\left(\max_{t\in[l..r],M\in \mathcal{M}_{1,\si}}\nu_1^{(M,t)}\geq c\delta[N^{(\epsilon)}]^2+N^{(\epsilon)}+\lambda_1^{(1,\si)}\right)=\frac{\mathcal{Z}_{1,+}}{\mathcal{Z}_1}
\end{align*}
Note that when $a_i=L$, $b_i=+$, $a_j=L$, $b_j=-$ and $x_{j,\epsilon}=x_{\si_0(1)}$, we have
\begin{align*}
z_{i,j}=\frac{1}{1-x_{i}x_{\si_0(1)}}=\sum_{k=0}^{\infty}[x_ix_{\si_0(1)}]^k
\end{align*}
 Let $A_1$ be the contribution of $\nu_1^{(M,t)}-\lambda_1^{(1,\sigma)}$ from $\prec'$ and let $A_2$ be the contribution of $\nu_1^{(M,t)}-\lambda_1^{(1,\sigma_0)}$ from $\prec$. By (\ref{s1}) we see that
 \begin{align*}
 A_1\leq N^{(\epsilon)}-N_{L,-}^{(\epsilon)};
 \end{align*}

When (\ref{css}) holds, we have
\begin{align*}
A_2\geq c\delta [N^{(\epsilon)}]^2.
\end{align*}
Since $\mathcal{Z}_1$ can be computed by (\ref{si1}), $\mathcal{Z}_{1,+}$ corresponds to monomials in the expansion of $\prod_{a_i=a_j=L, b_i=+, b_j=-, x_{j,\epsilon}=x_{\si_0(1)}}z_{ij}$, in which the total degree of $x_ix_{\si_0(1)}$ for all $i\in[l..r],a_i=L,b_i=+$ is at least $c\delta [N^{(\epsilon)}]^2$. Then there exists an $i$ with $a_i=L$ and $b_i=+$ and the degree of $x_ix_{\si_0(1)}$ is at least $c\delta N^{(\epsilon)}$.

Therefore we have 
\begin{align}
\frac{\mathcal{Z}_{1,+}}{\mathcal{Z}_1}\leq N^{(\epsilon)}\left[\max_{i\in[l..r],b_i=+,a_i=L} x_{i}x_{\si_0(1)}\right]^{c\delta N^{(\epsilon)}}\label{pcl}
\end{align}
Then (\ref{cn411}) follows from (\ref{clb}) and (\ref{pcl}).
\end{proof}

\section{Differential Operators and Marginal Distribution}\label{sect:md}

In this section, we introduce new differential operators, and prove that its action on the Schur generating function gives the marginal distribution of certain parts of the random partitions corresponding to random dimer coverings on rail yard graphs; see Proposition \ref{p43}.

\begin{definition}For $1\leq i\leq n$, let  
\begin{align*}
&V_i(\mathbf{u}^{(L,-,>t,i)}):=\prod_{u_{j},u_r\in \mathbf{u}^{(L,-,>t,i)};j<r}(u_j-u_r)\\
&V_{i,*}(\mathbf{u}^{(L,-,>t)}):=\left[\prod_{u_{j}\notin \mathbf{u}^{(L,-,>t,i)},u_r\in \mathbf{u}^{(L,-,>t,i)};j<r}(u_j-u_r)\right]\left[\prod_{u_{j}\notin \mathbf{u}^{(L,-,>t,i)},u_r\in \mathbf{u}^{(L,-,>t,i)};j>r}(u_r-u_j)\right].
\end{align*}

Introduce the family $\mathcal{D}_{i,k}$ of differential operators acting on
symmetric functions $f$ with variables $\mathbf{u}^{(L,-,>,t,i)}$ as follows:
\begin{equation}
\mathcal{D}_{i,k} f=\frac{1}{V_iV_{i,*}}\left(\sum_{u_j\in\mathbf{u}^{(L,-,<t,i)}}\left(u_j\frac{\partial}{\partial
u_j}\right)^k\right)(V_iV_{i,*}\cdot f)\label{dki}.
\end{equation}
Let 
\begin{align*}
\ol{\mathcal{D}}_{i,k}f=\frac{1}{V_i}\left(\sum_{u_j\in\mathbf{u}^{(L,-,<t,i)}}\left(u_j\frac{\partial}{\partial
u_j}\right)^k\right)(V_i\cdot f)
\end{align*}
Define
\begin{equation*}
V(\mathbf{u}^{(L,-,>t)})=\prod_{u_{j},u_r\in \mathbf{u}^{(L,-,>t)};j<r}(u_j-u_r)
\end{equation*}
and 
\begin{equation}
\mathcal{D}_{k} f=\frac{1}{V}\left(\sum_{u_j\in\mathbf{u}^{(L,-,>t)}}\left(u_j\frac{\partial}{\partial
u_j}\right)^k\right)(V\cdot f)\label{dk}.
\end{equation}
\end{definition}
It is straightforward to check the following lemma
\begin{lemma}\label{le52}
\begin{align*}
\mathcal{D}_k=\sum_{i=1}^n\mathcal{D}_{i,k}.
\end{align*}
\end{lemma}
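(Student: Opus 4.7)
The plan is to prove this identity by showing, for each fixed $i$, that
\begin{equation*}
\frac{1}{V}\left(\sum_{u_j\in\mathbf{u}^{(L,-,>t,i)}}\left(u_j\frac{\partial}{\partial u_j}\right)^k\right)(V\cdot f) = \mathcal{D}_{i,k} f,
\end{equation*}
and then sum over $i$. Since the groups $\mathbf{u}^{(L,-,>t,i)}$, $1\le i\le n$, partition $\mathbf{u}^{(L,-,>t)}$, summing the left-hand side over $i$ recovers exactly $\mathcal{D}_k f$.

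To establish the per-$i$ identity, I will factor the full Vandermonde $V$ by separating variables according to whether they belong to group $i$ or not. Every unordered pair of variables in $\mathbf{u}^{(L,-,>t)}$ falls into exactly one of three categories: both variables in group $i$ (contributing to $V_i$), both variables outside group $i$ (contributing to the Vandermonde $V_{\bar i}$ of the complementary block), or one inside and one outside (contributing to $V_{i,*}$). A bookkeeping check of the sign conventions in the definitions of $V_i$, $V_{i,*}$ and $V$ shows that these three products agree with the corresponding factors of $V$ with no extra sign, so
\begin{equation*}
V = V_i\cdot V_{i,*}\cdot V_{\bar i},
\end{equation*}
where $V_{\bar i}$ depends only on variables outside the $i$-th group.

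The key observation is then that the operator $\sum_{u_j\in\mathbf{u}^{(L,-,>t,i)}}(u_j\partial_{u_j})^k$ only differentiates variables in group $i$, and therefore commutes with multiplication by $V_{\bar i}$. Applying this operator to $V\cdot f = V_{\bar i}\cdot(V_i V_{i,*}\cdot f)$ pulls the factor $V_{\bar i}$ outside; dividing by $V = V_{\bar i}\cdot V_i V_{i,*}$ cancels this factor and leaves exactly $\mathcal{D}_{i,k} f$. Summing over $i$ and recognising that $\sum_i \sum_{u_j\in\mathbf{u}^{(L,-,>t,i)}} = \sum_{u_j\in\mathbf{u}^{(L,-,>t)}}$ yields the claimed identity.

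There is no real obstacle here beyond being careful about the sign conventions in $V_{i,*}$; the substance of the lemma is the compatibility of the two different "Vandermonde dressings" with an operator that acts on only part of the variables, and this compatibility is exactly the statement that the extra Vandermonde factor $V_{\bar i}$ involves no variables of group $i$.
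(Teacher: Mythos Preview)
Your argument is correct and is exactly the ``straightforward check'' the paper alludes to without writing out; the paper itself offers no proof beyond that remark. Your factorisation $V=V_i\,V_{i,*}\,V_{\bar i}$ with the sign bookkeeping matches the paper's conventions (each of $V$, $V_i$, $V_{i,*}$ is a product of differences $u_{\text{smaller index}}-u_{\text{larger index}}$), and the commutation of $V_{\bar i}$ with the group-$i$ differential operator is precisely the point that makes the identity immediate.
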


By the Schur branching rule (\ref{sbf}), Proposition \ref{p437} and Corollary \ref{p438}, we have for each $1\leq h\leq n$
\begin{small}
\begin{align*}
\mathcal{D}_{h,k}\mathcal{S}_{\rho^t,\mathbf{x}^{(L,-,> t)}}(\mathbf{u}^{(L,-,> t)})
&=\mathcal{D}_{h,k} \left[\frac{s_{\lambda^{(l)}}\left(\mathbf{u}^{(L,-,> t)},\mathbf{x}^{(L,-,\leq t)}\right)}{s_{\lambda^{(l)}}(\mathbf{x}^{(L,-)})}\prod_{i\in[l..t],b_i=+;}
\prod_{j\in[t+1..r],a_j=L,b_j=-.}\frac{\xi_{ij}}{z_{ij}}\right]\\
&=g(\ol{\si}_0)+
\sum_{\ol{\si}\in\left[\Sigma_{N_{L,-}^{(\epsilon)}}/\Sigma_{N_{L,-}^{(\epsilon)}}^X\right]^r,\ol{\si}\neq\ol{\si}_0}g(\ol{\si})
\end{align*}
\end{small}
where for each $\ol{\si}\in\left[\Sigma_{N_{L,-}^{(\epsilon)}}/\Sigma_{N_{L,-}^{(\epsilon)}}^X\right]^r$, we have
\begin{small}
\begin{align*}
g(\ol{\si}):&=\frac{1}{s_{\lambda^{(l)}}(\mathbf{x}^{(L,-)})}
\mathcal{D}_{h,k} \left[\left(\prod_{i\in[l..t],b_i=+;}
\prod_{j\in[t+1..r],a_j=L,b_j=-.}\frac{\xi_{ij}}{z_{ij}}\right)\right.\\
\notag&\left[\prod_{i<j,a_i=a_j=L,b_i=b_j=-,x_{\si_0(i),\epsilon}\neq x_{\si_0(j),\epsilon}}\frac{1}{w_{\si_0(i),\epsilon}-w_{\si_0(j),\epsilon}}\right]\left.\left((-1)^{f(\ol{\si})}\left(\prod_{i=1}^{n}s_{\phi^{(i,\sigma)}}(\mathbf{u}^{(L,-,>t,i)},\mathbf{x}^{(L,-,\leq t,i)})\right)
\right)
\right]
\end{align*}
\end{small}
and
\begin{small}
\begin{align*}
(-1)^{f(\ol{\si})}=\frac{\prod_{i<j,a_i=a_j=L,b_i=b_j=-,x_{\si(i),\epsilon}\neq x_{\si(j),\epsilon}}\left(w_{\si(i),\epsilon}-w_{\si(j),\epsilon}\right)}{\prod_{i<j,a_i=a_j=L,b_i=b_j=-,x_{\si_0(i),\epsilon}\neq x_{\si_0(j),\epsilon}}\left(w_{\si_0(i),\epsilon}-w_{\si_0(j),\epsilon}\right)}.
\end{align*}
\end{small}
Then we have
\begin{small}
\begin{align*}
g(\ol{\si})&=\frac{(-1)^{f(\ol{\si})}\left(\prod_{i=[1..n],i\neq h}s_{\phi^{(i,\sigma)}}(\mathbf{u}^{(L,-,>t,i)},\mathbf{x}^{(L,-,\leq t,i)})\right)}{s_{\lambda^{(l)}}(\mathbf{x}^{(L,-)})}\left(\prod_{i\in[l..t],b_i=+;}
\prod_{j\in[t+1..r],a_j=L,b_j=-,x_j\notin \mathbf{x}^{(L,-,>t,h)}}\frac{\xi_{ij}}{z_{ij}}\right)\\
&\times\left[\prod_{i<j,a_i=a_j=L,b_i=b_j=-,x_{\si_0(i),\epsilon}\neq x_{\si_0(j),\epsilon},x_{\si_0(i),\epsilon}\notin \mathbf{x}^{(L,-,>t,h)},x_{\si_0(j),\epsilon}\notin \mathbf{x}^{(L,-,>t,h)}}\frac{1}{w_{\si_0(i),\epsilon}-w_{\si_0(j),\epsilon}}\right]A(\ol{\si})
\end{align*}
\end{small}
where
\begin{small}
\begin{align*}
A(\ol{\si})&:=\mathcal{D}_{h,k} \left[\left(\prod_{i\in[l..t],b_i=+}
\prod_{j\in[t+1..r],a_j=L,b_j=-,x_j\in \mathbf{x}^{(L,-,>t,h)}}\frac{\xi_{ij}}{z_{ij}}\right)\left(s_{\phi^{(h,\sigma)}}(\mathbf{u}^{(L,-,>t,i)},\mathbf{x}^{(L,-,\leq t,i)})
\right)\right.\\
\notag&\left[\prod_{i<j,a_i=a_j=L,b_i=b_j=-,x_{\si_0(i),\epsilon}\neq x_{\si_0(j),\epsilon},[x_{\si_0(i)}\in \mathbf{x}^{(L,-,>t,h)} \ \mathrm{or}\ x_{\si_0(i)}\in \mathbf{x}^{(L,-,>t,h)}]}\frac{1}{w_{\si_0(i),\epsilon}-w_{\si_0(j),\epsilon}}\right]
\end{align*}
\end{small}
Let 
\begin{align*}
\underline{u}_j:=\frac{u_j}{x_j};\qquad\forall\ u\in \mathbf{u}^{(L,-)},
\end{align*}
then $\underline{u}_j$ is in a neighborhood of 1, and we can write
\begin{align*}
\mathcal{D}_{i,k} =\frac{1}{V_iV_{i,*}}\left(\sum_{u_j\in\mathbf{u}^{(L,-,<t,i)}}\left(\underline{u}_j\frac{\partial}{\partial
\underline{u}_j}\right)^k\right)V_iV_{i,*}.
\end{align*}
Assume $i<j$, $a_i=a_j=L$, $b_i=b_j=-$, $x_{\si_0(i),\epsilon}\neq x_{\si_0(j),\epsilon},[\si_0(i)=d \ \mathrm{or}\ \si_0(j)=d]$, $x_d\in \mathbf{x}^{(L,-,>t,h)}$.
Under Assumption \ref{ap32} we have
\begin{align}
\left(\frac{\partial}{\partial\underline{u}_d}\right)^{g}\frac{1}{w_{\si_0(i),\epsilon}-w_{\si_0(j),\epsilon}}=\left(-\frac{x_{d,\epsilon}}{w_{\si_0(i),\epsilon}}\right)^g\frac{1}{w_{\si_0(i),\epsilon}}\left(1+O(e^{-cN})\right).\label{gdd}
\end{align}
Note that under Assumption \ref{ap32}, (\ref{gdd}) is exponentially small when $\si_0(j)=d$ compared to when 
$\si_0(i)=d$.

Assume $u_{j}\notin \mathbf{u}^{(L,-,>t,h)},u_d\in \mathbf{u}^{(L,-,>t,i)};j\neq d$, then
\begin{align}
\frac{1}{u_j-u_d}\left(\underline{u}_d\frac{\partial}{\partial\underline{u}_d}\right)^g (u_j-u_d)=-\frac{x_d}{u_j-u_d}.\label{gdd1}
\end{align}
Under Assumption \ref{ap32}, (\ref{gdd1}) is exponentially small when $u_j>u_d$ compared to when $u_j<u_d$.

Moreover, assume $i\in[l..t],b_i=+$, $j\in[t+1..r],a_d=L,b_d=-$, $x_d\in \mathbf{x}^{(L,-,>t,h)}$, we have

\begin{align*}
\left(\frac{\partial}{\partial\underline{u}_d}\right)^{g}\frac{\xi_{id}}{z_{id}}=
\begin{cases}
 \frac{1}{z_{id}}\left(\frac{x_{i,\epsilon}x_{d,\epsilon}}{1-x_{i,\epsilon}u_{d,\epsilon}}\right)^g\frac{1}{1-x_{i,\epsilon}u_{d\epsilon}}, &\mathrm{if}\ b_i=L;\\
\frac{x_{i,\epsilon}x_{d,\epsilon}}{z_{id}},&\mathrm{if}\ b_i=R\ \mathrm{and}\ g=1;\\
0&\mathrm{if}\ b_i=R\ \mathrm{and}\ g>1.
\end{cases}
\end{align*}

In each case above, we have
\begin{align*}
\left(\frac{\partial}{\partial\underline{u}_d}\right)^{g}\frac{\xi_{id}}{z_{id}}=O(e^{-cN^{(\epsilon)}});\ \mathrm{If}\ x_{d,\epsilon}\neq x_{\xi_0(1),\epsilon}.
\end{align*}
Therefore we have
\begin{small}
\begin{align*}
&A(\ol{\si})=\left(1+O(e^{-cN^{(\epsilon)}})\right)\mathcal{D}_{h,k} \left[\left(\prod_{i\in[l..t],b_i=+}
\prod_{j\in[t+1..r],a_j=L,b_j=-,x_j\in \mathbf{x}^{(L,-,>t,h)}}\frac{\xi_{ij}}{z_{ij}}\right)\left(s_{\phi^{(h,\sigma)}}(\mathbf{u}^{(L,-,>t,i)},\mathbf{x}^{(L,-,\leq t,i)})
\right)\right.\\
\notag&\left[\prod_{i<j,a_i=a_j=L,b_i=b_j=-,x_{\si_0(i),\epsilon}\neq x_{\si_0(j),\epsilon},[x_{\si_0(i)}\in \mathbf{x}^{(L,-,>t,h)} \ \mathrm{or}\ x_{\si_0(i)}\in \mathbf{x}^{(L,-,>t,h)}]}\frac{1}{w_{\si_0(i),\epsilon}}\right]\\
&
=\left(1+O(e^{-cN^{(\epsilon)}})\right)\mathcal{D}_{h,k} \left[\left(\prod_{i\in[l..t],b_i=+}
\prod_{j\in[t+1..r],a_j=L,b_j=-,x_j\in \mathbf{x}^{(L,-,>t,h)}}\frac{\xi_{ij}}{z_{ij}}\right)\left(s_{\lambda^{(h,\sigma)}}(\mathbf{u}^{(L,-,>t,h)},\mathbf{x}^{(L,-,\leq t,h)})
\right)\right]\\
&
=\frac{\left(1+O(e^{-cN^{(\epsilon)}})\right)}{V_{h,*}}\ol{\mathcal{D}}_{h,k} \left[\left(\prod_{i\in[l..t],b_i=+}
\prod_{j\in[t+1..r],a_j=L,b_j=-,x_j\in \mathbf{x}^{(L,-,>t,h)}}\frac{\xi_{ij}}{z_{ij}}\right)\right.\\
&\left.\left(s_{\lambda^{(h,\sigma)}}(\mathbf{u}^{(L,-,>t,h)},\mathbf{x}^{(L,-,\leq t,h)})
\right)V_{h,*}\right]\\
&
=\frac{\left(1+O(e^{-cN^{(\epsilon)}})\right)}{Q_h}\ol{\mathcal{D}}_{h,k} \left[\left(\prod_{i\in[l..t],b_i=+}
\prod_{j\in[t+1..r],a_j=L,b_j=-,x_j\in \mathbf{x}^{(L,-,>t,h)}}\frac{\xi_{ij}}{z_{ij}}\right)\right.\\
&\left.\left(s_{\lambda^{(h,\sigma)}}(\mathbf{u}^{(L,-,>t,h)},\mathbf{x}^{(L,-,\leq t,h)})
\right)Q_h\right],
\end{align*}
\end{small}
where
\begin{small}
\begin{align*}
Q_h=\left[\prod_{x_j\in\mathbf{x}^{(L,-,>t,h)}}\underline{u}_j\right]^{\sum_{i=h+1}^n N_{L,-,i}^{(>t,\epsilon)}}.
\end{align*}
\end{small}
Moreover, if $h>1$,
\begin{small}
\begin{align*}
A(\ol{\si})=\frac{(1+O(e^{-cN^{(\epsilon)}}))}{Q_h}\ol{\mathcal{D}}_{h,k} \left(Q_hs_{\lambda^{(h,\sigma)}}(\mathbf{u}^{(L,-,>t,h)},\mathbf{x}^{(L,-,\leq t,h)})
\right).
\end{align*}
\end{small}

When $h=1$, we construct a rail yard graph as in Lemma \ref{l46}(2) with left boundary condition given by
\begin{align*}
\tilde{\lambda}^{(1,\si)}=(\lambda^{(1,\si)},0,\ldots,0),\ \mathrm{s.t.}\ l(\tilde{\lambda}^{(1,\si)})=N_{L,-}^{(\epsilon)}.
\end{align*}
Let $\tilde{\rho}^{t}_{\lambda^{(1,\si)}}$ be the probability measure for the random partition corresponding to dimer configurations incident to odd vertices with abscissa $2t-1$.

When $h>1$, we construct a rail yard graph as in Lemma \ref{l46}(3) with left boundary condition given by 
\begin{small}
\begin{align}
\tilde{\lambda}^{(h,\si)}=(\lambda^{(h,\si)},0,\ldots,0),\ \mathrm{s.t.}\ l(\tilde{\lambda}^{(h,\si)})=N_{L,-}^{(\epsilon
)}.\label{dlh}
\end{align}
\end{small}
Let $\tilde{\rho}^{t}_{\lambda^{(i,\si_0)}}$  
be the probability measure for the random partition  corresponding to dimer configurations incident to the  column of odd vertices with abscissa $2t-1$.

Following the same arguments as in Lemma \ref{l46} with $\si_0$ replaced by $\si$, we obtain for $h=1$
\begin{small}
\begin{align*}
&A(\ol{\si})=(1+O(e^{-cN^{(\epsilon)}}))
s_{\lambda^{(1,\sigma)}}(\mathbf{x}^{(L,-,1)}))
\\
&\times\mathcal{D}_{1,k} \left[\left(\prod_{i\in[l..t],b_i=+}
\prod_{j\in[t+1..r],a_j=L,b_j=-,x_j\in \mathbf{x}^{(L,-,>t,1)}}\frac{\xi_{ij}}{z_{ij}}\right)\frac{s_{\lambda^{(1,\sigma)}}(\mathbf{u}^{(L,-,>t,1)},\mathbf{x}^{(L,-,\leq t,1)}}{s_{\lambda^{(1,\sigma)}}(\mathbf{x}^{(L,-,1)})}
\right]\\
&=(1+O(e^{-cN^{(\epsilon)}}))
s_{\lambda^{(1,\sigma)}}(\mathbf{x}^{(L,-,1)})
\mathcal{D}_{1,k}\left[\sum_{\mu\in \YY_{N_{L,-}^{(>t,\epsilon)}}}\tilde{\rho}^t_{\lambda^{(1,\si)}}(\mu)
\frac{s_{\mu}(\underline{u}^{(1)},0^{N_{L,-}^{(>t,\epsilon)}-N_{L,-,1}^{(>t,\epsilon)}})}{s_{\mu}(1,\ldots,1,0^{N_{L,-}^{(>t,\epsilon)}-N_{L,-,1}^{(>t,\epsilon)}})}\right]\\
&=(1+O(e^{-cN^{(\epsilon)}}))
s_{\lambda^{(1,\sigma)}}(\mathbf{x}^{(L,-,1)})
\mathcal{D}_{1,k}\left[\sum_{\mu\in \YY_{N_{L,-}^{(>t,\epsilon)}}}\tilde{\rho}^t_{\lambda^{(1,\si)}}(\mu)
\frac{s_{\mu}(\underline{u}^{(1)},0^{N_{L,-}^{(>t,\epsilon)}-N_{L,-,1}^{(>t,\epsilon)}})}{s_{\mu}(1,\ldots,1,0^{N_{L,-}^{(>t,\epsilon)}-N_{L,-,1}^{(>t,\epsilon)}})}\right]\\
&=\frac{(1+O(e^{-cN^{(\epsilon)}}))}{Q_h}
s_{\lambda^{(1,\sigma)}}(\mathbf{x}^{(L,-,1)})
\ol{\mathcal{D}}_{1,k}\left[Q_h\sum_{\mu\in \YY_{N_{L,-}^{(>t,\epsilon)}}}\tilde{\rho}^t_{\lambda^{(1,\si)}}(\mu)
\frac{s_{\mu}(\underline{u}^{(1)},0^{N_{L,-}^{(>t,\epsilon)}-N_{L,-,1}^{(>t,\epsilon)}})}{s_{\mu}(1,\ldots,1,0^{N_{L,-}^{(>t,\epsilon)}-N_{L,-,1}^{(>t,\epsilon)}})}\right].
\end{align*}
\end{small}
By Lemma \ref{l212}, we obtain when $h=1$
\begin{small}
\begin{align*}
A(\ol{\si})=\frac{(1+O(e^{-cN^{(\epsilon)}}))}{Q_1}
s_{\lambda^{(1,\sigma)}}(\mathbf{x}^{(L,-,1)})
\ol{\mathcal{D}}_{1,k}\left[Q_1\sum_{\mu\in \YY_{N_{L,-}^{(>t,\epsilon,1)}}}\tilde{\rho}^t_{\lambda^{(1,\si)}}(\mu)
\frac{s_{\mu}(\underline{u}^{(1)})}{s_{\mu}(1,\ldots,1)}\right].
\end{align*}
\end{small}
Similarly by Lemmas \ref{l212} and \ref{l46}, we obtain that for $h>1$
\begin{small}
\begin{align*}
A(\ol{\si})=\frac{(1+O(e^{-cN^{(\epsilon)}}))}{Q_h}
s_{\lambda^{(h,\sigma)}}(\mathbf{x}^{(L,-,h)})
\ol{\mathcal{D}}_{h,k}\left[Q_h\sum_{\mu\in \YY_{N_{L,-}^{(>t,\epsilon,h)}}}\tilde{\rho}^t_{\lambda^{(h,\si)}}(\mu)
\frac{s_{\mu}(\underline{u}^{(1)})}{s_{\mu}(1,\ldots,1)}\right].
\end{align*}
Note that for all $1\leq h\leq n$, we have
\begin{align*}
&\left.\ol{\mathcal{D}}_{h,k}\left[Q_h\sum_{\mu\in \YY_{N_{L,-}^{(>t,\epsilon,h)}}}\tilde{\rho}^t_{\lambda^{(h,\si)}}(\mu)
\frac{s_{\mu}(\underline{u}^{(h)})}{s_{\mu}(1,\ldots,1)}\right]\right|_{\underline{u}^{(h)}=(1,\ldots,1)}\\
&=\mathbb{E}_{\tilde{\rho}^t_{\lambda^{(h,\sigma)}}}\left[\sum_{i=1}^{N_{L,-}^{(>t,\epsilon,h)}}\left(\mu_i+\sum_{j=h}^{n}N_{L,-}^{(>t,\epsilon,j)}-i\right)^k\right].
\end{align*}
\end{small}

Hence we obtain when $1\leq h\leq n$,
\begin{small}
\begin{align*}
&\left.g(\ol{\si})\right|_{\underline{u}^{(h)}=(1,\ldots,1)}=\frac{(-1)^{f(\ol{\si})}\left(\prod_{i=[1..n],i\neq h}s_{\phi^{(i,\sigma)}}(\mathbf{u}^{(L,-,>t,i)},\mathbf{x}^{(L,-,\leq t,i)})\right)}{Q_h s_{\lambda^{(l)}}(\mathbf{x}^{(L,-)})}\\
&\times\left(\prod_{i\in[l..t],b_i=+;}
\prod_{j\in[t+1..r],a_j=L,b_j=-,x_j\notin \mathbf{x}^{(L,-,>t,h)}}\frac{\xi_{ij}}{z_{ij}}\right)\\
&\times\left[\prod_{i<j,a_i=a_j=L,b_i=b_j=-,x_{\si_0(i),\epsilon}\neq x_{\si_0(j),\epsilon},x_{\si_0(i),\epsilon}\notin \mathbf{x}^{(L,-,>t,h)},x_{\si_0(j),\epsilon}\notin \mathbf{x}^{(L,-,>t,h)}}\frac{1}{w_{\si_0(i),\epsilon}-w_{\si_0(j),\epsilon}}\right]\\
&\times(1+O(e^{-cN^{(\epsilon)}}))
s_{\lambda^{(h,\sigma)}}(\mathbf{x}^{(L,-,h)})
\mathbb{E}_{\tilde{\rho}^t_{\lambda^{(h,\sigma)}}}\left[\sum_{i=1}^{N_{L,-}^{(>t,\epsilon,h)}}\left(\mu_i+\sum_{j=h}^n N_{L,-}^{(>t,\epsilon,j)}-i\right)^k\right].
\end{align*}
\end{small}
By Lemmas \ref{l412} and \ref{le409}, we have
\begin{small}
\begin{align*}
\sum_{i=1}^{N_{L,-}^{(>t,\epsilon,h)}}\left(\mu_i+\sum_{j=h}^n N_{L,-}^{(>t,\epsilon,j)}-i\right)^k=O(|N^{(\epsilon)}|^{2k+1}).
\end{align*}
\end{small}
Together with Proposition \ref{p436}, one sees that for any $\ol{\si}\neq \ol{\si}_0$, we have
$|g(\ol{\si})|$ is exponentially small compared to $|g(\ol{\si}_0)|$. Hence we have
\begin{small}
\begin{align*}
&\left.\mathcal{D}_{h,k}\mathcal{S}_{\rho^t,\mathbf{x}^{(L,-,> t)}}(\mathbf{u}^{(L,-,> t)})\right|_{\underline{u}=(1,\ldots,1)}
=\left(1+O(e^{-cN^{(\epsilon)}})\right)g(\ol{\si}_0)\\
&=(1+O(e^{-cN^{(\epsilon)}}))
\mathbb{E}_{\tilde{\rho}^t_{\lambda^{(h,\sigma_0)}}}\left[\sum_{i=1}^{N_{L,-}^{(>t,\epsilon,h)}}\left(\mu_i+\sum_{j=h}^n N_{L,-}^{(>t,\epsilon,j)}-i\right)^k\right].
\end{align*}
\end{small}
Similarly, one can show that 
\begin{small}
\begin{align*}
&\left.\mathcal{D}_{h,k}^l\mathcal{S}_{\rho^t,\mathbf{x}^{(L,-,> t)}}(\mathbf{u}^{(L,-,> t)})\right|_{\underline{u}=(1,\ldots,1)}
=(1+O(e^{-cN^{(\epsilon)}}))
\mathbb{E}_{\tilde{\rho}^t_{\lambda^{(h,\sigma_0)}}}\left[\sum_{i=1}^{N_{L,-}^{(>t,\epsilon,h)}}\left(\mu_i+\sum_{j=h}^n N_{L,-}^{(>t,\epsilon,h)}-i\right)^k\right]^l.
\end{align*}
\end{small}
and for $h_1,h_2,\ldots,h_g$ distinct with $g<n$, we have
\begin{proposition}\label{p43}
\begin{small}
\begin{align}
\label{ee3}&\left.
\left[\prod_{j=1}^g\mathcal{D}_{h_j,k_j}^{l_j}\right]\mathcal{S}_{\rho^t,\mathbf{x}^{(L,-,> t)}}(\mathbf{u}^{(L,-,> t)})\right|_{\underline{u}
=(1,\ldots,1)}\\&=(1+O(e^{-cN^{(\epsilon)}}))
\prod_{j=1}^{g}\mathbb{E}_{\tilde{\rho}^t_{\lambda^{(h_j,\sigma_0)}}}\left[\sum_{i=1}^{N_{L,-}^{(>t,\epsilon,h_j)}}\left(\mu_i+\sum_{j=h}^n N_{L,-}^{(>t,\epsilon,h_j)}-i\right)^{k_j}\right]^{l_j}.\notag
\end{align}
\end{small}
\end{proposition}

In particular, $\mathbb{E}_{\tilde{\rho}^t_{\lambda^{(h_j,\sigma_0)}}}\left[\sum_{i=1}^{N_{L,-}^{(>t,\epsilon,h_j)}}\left(\mu_i+\sum_{j=h}^n N_{L,-}^{(>t,\epsilon,h_j)}-i\right)^{k_j}\right]^{l_j}$ is the expectation of  $\left[\sum_{i=1}^{N_{L,-}^{(>t,\epsilon,h_j)}}\left(\mu_i+\sum_{j=h}^n N_{L,-}^{(>t,\epsilon,h_j)}-i\right)^{k_j}\right]^{l_j}$ when the random partition
\begin{align*}
\left(\mu_1,\mu_2,\ldots, \mu_{N_{L,-}^{(>t,\epsilon,h_j)}}\right)
\end{align*}
has distribution $\tilde{\rho}^t_{\lambda^{(h_j,\sigma_0)}}$. Here $\tilde{\rho}^t_{\lambda^{(h_j,\sigma_0)}}$ is the probability measure for the random partition corresponding to dimer configurations incident to the column of odd vertices with abscissa $2t-1$ on the rail yard graph as in Lemma \ref{l46}(2)(3) with left boundary condition given by $\lambda^{(h_j,\sigma_0)}$, see (\ref{dlh}); and modified edge weights.

\section{Independence}\label{sect:idp}

In this section, we prove that different parts of random partitions are asymptotically independent.

It is straightforward to check the following
\begin{align*}
\mathcal{D}_k s_{\lambda}(u_1,\ldots,u_N)=\left[\sum_{i=1}^N(\lambda_i+N-i)^k\right]s_{\lambda}(u_1,\ldots,u_N)
\end{align*}
Recall the Schur generating function defined as in \ref{df21}. We have
\begin{align*}
\left|[\mathcal{D}_k]^l\mathcal{S}_{\rho,\mathbf{X}}(u_1,\ldots,u_N)\right|_{(u_1,\dots,u_N)=\mathbf{X}}
=\mathbb{E}_{\rho}\left[\sum_{i=1}^N (\lambda_i+N-i)^k\right]^{l}
\end{align*}
Applying the formula to $\mathcal{S}_{\rho^t,\mathbf{x}^{(L,-,> t)}}(\mathbf{u}^{(L,-,> t)})$, where $\rho^t$ is the distribution of partitions corresponding to the dimer configurations adjacent to the odd vertices in column $(2t-1)$, we obtain
\begin{align}
\mathbb{E}_{\rho^t}\prod_{j-1}^{r}\left[\sum_{i=1}^{N^{(>t,\epsilon)}_{L,-}}\left[\lambda_i+N_{L,-}^{(>t,\epsilon)}-i\right]^{k_j}\right]^{l_j}=
\left.\left(\prod_{j=1}^{r}[\mathcal{D}_{k_j}]^{l_j}\right)\mathcal{S}_{\rho^t,\mathbf{x}^{(L,-,> t)}}(\mathbf{u}^{(L,-,> t)})\right|_{\mathbf{u}^{(L,-,> t)}=\mathbf{x}^{(L,-,> t)}}.\label{ee1}
\end{align}
where $N^{(>t,\epsilon)}_{L,-}$ is given as in Lemma \ref{l46}(1). Recall that $n$ is the total number of distinct elements in $\mathbf{x}^{L,-}$. For $1\leq s\leq n$, let
\begin{align*}
I_s^{(>t)}=\sum_{d=1}^{s}N_{L,-,d}^{(>t,\epsilon)};
\end{align*}
and define
\begin{align*}
A_{s,k}^{(>t)}=\sum_{i=I_{s-1}^{(>t)}+1}^{I_s^{(>t)}}\left[\lambda_i+N_{L,-}^{(>t,\epsilon)}-i\right]^k.
\end{align*}
Then we obtain
\begin{small}
\begin{align}
\mathbb{E}_{\rho^t}\prod_{j=1}^{r}\left[\sum_{i=1}^{N^{(>t,\epsilon)}_{L,-}}\left[\lambda_i+N_{L,-}^{(>t,\epsilon)}-i\right]^{k_j}\right]^{l_j}=
\mathbb{E}_{\rho^t}\prod_{j=1}^{r}\left[\sum_{s=1}^{n}A_{s,k_j}^{(>t)}\right]^{l_j}.\label{ee2}
\end{align}
\end{small}

It is straightforward to verify the following lemma:
\begin{proposition}\label{le61}Consider perfect matchings on a rail yard graph RYG$(l,r,\underline{a},\underline{b})$ satisfying Assumptions \ref{ap41}, \ref{ap428}, \ref{ap32}. Let
\begin{align*}
\left(\lambda_1,\ldots,\lambda_{N_{L,-}^{(>t,\epsilon)}}\right)
\end{align*}
be the random partition corresponding to dimer configurations adjacent to a column of odd vertices with abscissa $2t-1$.

Conditional on an event with probability at least $1-e^{-cN^{(\epsilon)}}$. 
\begin{enumerate}
\item The random partition
\begin{align*}
((\lambda_i)_{i=1}^{I_1^{(>t)}},0,\ldots,0)\in \YY^{N_{L,-}^{(>t,\epsilon)}}
\end{align*}
has the distribution 
$\tilde{\rho}^t_{\lambda^{(1,\si_0)}}$, where $\tilde{\rho}^t_{\lambda^{(1,\si_0)}}$ is given as in Lemma \ref{l46}(2).
\item 

The random partition
\begin{align*}
((\lambda_i)_{i=I_{s-1}^{(>t)}+1}^{I_s^{(>t)}},0,\ldots,0)\in \YY^{N_{L,-}^{(>t,\epsilon)}}
\end{align*}
has distribution $\tilde{\rho}^t_{\lambda^{(s,\si_0)}}$, where $\tilde{\rho}^t_{\lambda^{(s,\si_0)}}$ is given as in Lemma \ref{l46}(3).
\end{enumerate}
\end{proposition}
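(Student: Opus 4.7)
The plan is to derive the conditional coupling statement of the proposition from the multiplicative factorizations~\eqref{sg1} of the Schur generating function and~\eqref{pt} of the partition function, which together show that $\rho^t$ decomposes into $n$ essentially independent components indexed by the distinct weight values, with a relative error of order $e^{-cN^{(\epsilon)}}$.

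First I would interpret~\eqref{sg1} as asserting that $\mathcal{S}_{\rho^t,\mathbf{x}^{(L,-,>t)}}(\mathbf{u}^{(L,-,>t)})$ equals, up to a factor $1+O(e^{-cN^{(\epsilon)}})$, the product over $i\in[1..n]$ of Schur generating functions of the measures $\tilde\rho^t_{\lambda^{(i,\sigma_0)}}$ evaluated on the restricted variable sets $\mathbf{u}^{(L,-,>t,i)}$. By Lemma~\ref{l212}, each factor is precisely the Schur generating function for the distribution of the $i$th padded sub-partition $((\lambda_j)_{j=I_{i-1}^{(>t)}+1}^{I_i^{(>t)}},0,\ldots,0)$ under $\tilde\rho^t_{\lambda^{(i,\sigma_0)}}$, as supplied by Lemma~\ref{l46}(2)--(3). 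Because Schur functions of distinct shapes are linearly independent in the ring of symmetric polynomials, equating coefficients on the two sides yields, for every admissible decomposition $(\mu^{(1)},\ldots,\mu^{(n)})$, the pointwise estimate
$$\rho^t_{\mathrm{decomp}}(\mu^{(1)},\ldots,\mu^{(n)}) = \bigl(1+O(e^{-cN^{(\epsilon)}})\bigr)\prod_{i=1}^n \tilde\rho^t_{\lambda^{(i,\sigma_0)}}(\mu^{(i)}),$$
where $\rho^t_{\mathrm{decomp}}$ denotes the push-forward of $\rho^t$ under the decomposition map, and the error absorbs the contributions of non-leading permutations $\ol\sigma\neq\ol\sigma_0$ that are exponentially suppressed by Proposition~\ref{p436}.

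This pointwise Radon--Nikodym estimate yields a total variation bound of order $e^{-cN^{(\epsilon)}}$ between the joint law of the decomposed column-$t$ partition and the product measure $\prod_{i=1}^n \tilde\rho^t_{\lambda^{(i,\sigma_0)}}$. A standard maximal coupling of the two measures then produces an event $\mathcal E$ of probability at least $1-e^{-cN^{(\epsilon)}}$ on which the decomposed pieces are distributed exactly as the product, which implies both conclusions~(1) and~(2) simultaneously.

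The main obstacle is not the coupling itself, which is routine once a total variation bound is in place, but rather the combinatorial identification of the decomposition map $\mu\mapsto(\mu^{(1)},\ldots,\mu^{(n)})$ so that~\eqref{sg1} matches the product of individual Schur generating functions term by term. This requires tracking how charges on intermediate columns split across the $n$ subgraphs of Lemma~\ref{l46}, and checking that the block structure~\eqref{abt} of the left boundary partition is compatible with the reordering of variables encoded in $\sigma_0$. Once this bookkeeping is complete, the linear independence of Schur functions gives the pointwise ratio bound, and the coupling argument completes the proof.
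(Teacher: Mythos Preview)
Your overall strategy is consistent with the paper's one-line proof, which simply invokes Corollary~\ref{p438}, Proposition~\ref{p436}, and Lemma~\ref{l46}; you are unpacking those citations through the derived factorization~\eqref{sg1}, and the maximal-coupling step you append is a sensible way to make precise the informal ``conditional on an event'' phrasing.

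There is, however, a real gap in your coefficient-matching step. On the left of~\eqref{sg1} the expansion is in the basis $\{s_\mu(\mathbf{u}^{(L,-,>t)})\}_\mu$ of symmetric functions in the \emph{full} variable set, whereas on the right the expanded terms are \emph{products} $\prod_i s_{\mu^{(i)}}(\underline{u}^{(i)},0,\ldots,0)$ in disjoint blocks of variables. These are two genuinely different linear bases; linear independence of Schur functions does not by itself let you read off $\rho^t(\mu)$ on the left as $\prod_i \tilde\rho^t_{\lambda^{(i,\sigma_0)}}(\mu^{(i)})$ on the right. Passing between the two bases is precisely the content of Proposition~\ref{p437} applied now to each $s_\mu(\mathbf{u}^{(L,-,>t)})/s_\mu(\mathbf{x}^{(L,-,>t)})$ individually, followed by a $\sigma_0$-dominance estimate. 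But Proposition~\ref{p436} is proved only for the specific boundary partition $\lambda^{(l)}$ obeying the block hypotheses of Assumption~\ref{ap428}, not for an arbitrary $\mu$ in the support of $\rho^t$; you would need to argue either that such $\mu$ inherit enough block structure for the same estimate to go through, or else to bypass~\eqref{sg1} entirely and factor $\rho^t(\mu)$ directly from its bosonic-Fock expression, which is what the paper's citation of Corollary~\ref{p438} (the point-evaluation formula, not the generating-function identity) appears to intend. Your final paragraph correctly flags the combinatorial identification as the crux, but the sentence ``linear independence of Schur functions gives the pointwise ratio bound'' is where the argument as written does not close.
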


\begin{proof}The lemma follows from Corollary \ref{p438} and Proposition \ref{p436} and Lemma \ref{l46}.
\end{proof}

\begin{proposition}\label{p62}The random partitions 
\begin{align*}
(\lambda_i)_{i=1}^{I_1^{(>t)}}, (\lambda_i)_{i=I_{1}^{(>t)}+1}^{I_2^{(>t)}},\ldots, (\lambda_i)_{i=I_{n-1}^{(>t)}+1}^{I_n^{(>t)}}
\end{align*}
are asymptotically independent in the sense that if 
\begin{align*}
\prod_{j=1}^{r}\left[\sum_{s=1}^{n}A_{s,k_j}^{(>t)}\right]^{l_j}=\sum_i \prod_{s=1}^n f_i(A_{s,k_1}^{(>t)},A_{s,k_2}^{(>t)},\ldots,A_{s,k_n}^{(>t)} )
\end{align*}
Then
\begin{small}
\begin{align*}
\mathbb{E}\prod_{j=1}^{r}\left[\sum_{s=1}^{n}A_{s,k_j}^{(>t)}\right]^{l_j}=\left(1+O(e^{-cN^{(\epsilon)}})\right)\left[\sum_i \prod_{s=1}^n \mathbb{E}f_i(A_{s,k_1}^{(>t)},A_{s,k_2}^{(>t)},\ldots,A_{s,k_n}^{(>t)} )\right]
\end{align*}
\end{small}
\end{proposition}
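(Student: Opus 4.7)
The plan is to combine the moment identity (\ref{ee1}) and the decomposition $\mathcal{D}_{k}=\sum_{s=1}^{n}\mathcal{D}_{s,k}$ from Lemma \ref{le52} with the near-factorization (\ref{ee3}). First I would verify that each block sum $A_{s,k}^{(>t)}$ is precisely the eigenvalue associated to the operator $\mathcal{D}_{s,k}$ acting on the marginal Schur generating function of the $s$-th block. For an index $i$ in the range $I_{s-1}^{(>t)}+1\leq i\leq I_{s}^{(>t)}$, the substitution $j=i-I_{s-1}^{(>t)}$ together with the identity $N_{L,-}^{(>t,\epsilon)}-I_{s-1}^{(>t)}=\sum_{d=s}^{n}N_{L,-,d}^{(>t,\epsilon)}$ gives
$$\lambda_i+N_{L,-}^{(>t,\epsilon)}-i=\mu_j+\sum_{d=s}^{n}N_{L,-,d}^{(>t,\epsilon)}-j,$$
which matches the power-sum appearing inside $\mathbb{E}_{\tilde\rho^{t}_{\lambda^{(s,\sigma_0)}}}$ on the right of (\ref{ee3}).

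Next I would multinomially expand
$$\prod_{j=1}^{r}\Big[\sum_{s=1}^{n}A_{s,k_j}^{(>t)}\Big]^{l_j}=\sum_{i}\prod_{s=1}^{n}f_{i}(A_{s,k_1}^{(>t)},\ldots,A_{s,k_n}^{(>t)})$$
together with its operator counterpart $\prod_{j=1}^{r}[\sum_{s=1}^{n}\mathcal{D}_{s,k_j}]^{l_j}$. Since $\mathcal{D}_{s,k}$ for different values of $s$ act on disjoint variable groups $\mathbf{u}^{(L,-,>t,s)}$ and hence commute, the operator expansion produces the parallel sum $\sum_{i}\prod_{s=1}^{n}F_{i}(\mathcal{D}_{s,k_1},\ldots,\mathcal{D}_{s,k_n})$, with $F_{i}$ the operator analogue of $f_{i}$. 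Applying (\ref{ee3}) term-by-term (absorbing repeated factors of a given $\mathcal{D}_{s,k}$ into a single power $\mathcal{D}_{s,k}^{l}$; Proposition \ref{p436} guarantees that the dominance of the $\sigma=\sigma_0$ contribution persists under any such iterated action) yields
$$\left.\prod_{s=1}^{n}F_{i}(\mathcal{D}_{s,\cdot})\,\mathcal{S}_{\rho^t,\mathbf{x}^{(L,-,>t)}}\right|_{\underline{u}=\mathbf{1}}=(1+O(e^{-cN^{(\epsilon)}}))\prod_{s=1}^{n}\mathbb{E}_{\tilde\rho^{t}_{\lambda^{(s,\sigma_0)}}}f_{i}(A_{s,\cdot}^{(>t)}).$$
Summing over $i$ and invoking (\ref{ee1})--(\ref{ee2}) identifies the left-hand sum with $\mathbb{E}_{\rho^t}\prod_{j}[\sum_{s}A_{s,k_j}^{(>t)}]^{l_j}$. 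Proposition \ref{le61} then converts each $\mathbb{E}_{\tilde\rho^{t}_{\lambda^{(s,\sigma_0)}}}f_{i}$ on the right into $\mathbb{E}_{\rho^t}f_{i}$, up to an additional exponentially small error, since the $s$-th block of $\lambda$ under $\rho^t$ agrees in distribution with $\tilde\rho^{t}_{\lambda^{(s,\sigma_0)}}$ outside an event of probability at most $e^{-cN^{(\epsilon)}}$. Note that circularity is avoided: step three delivers a factorization into marginals of the auxiliary measures $\tilde\rho^{t}_{\lambda^{(s,\sigma_0)}}$, and only afterwards does Proposition \ref{le61} swap these for the corresponding marginals of $\rho^t$.

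The main obstacle will be controlling the cumulative error after summation over all the monomials $i$. Each $f_{i}$ is a monomial in the $A_{s,k}^{(>t)}$, which by Lemmas \ref{l412} and \ref{le409} are bounded by polynomials in $N^{(\epsilon)}$, and the number of monomials in the expansion is likewise polynomial in $N^{(\epsilon)}$ (for fixed $r$ and $\{k_j,l_j\}$). Since all polynomial factors are dominated by the exponentially small corrections $e^{-cN^{(\epsilon)}}$, the aggregated error remains of order $e^{-c'N^{(\epsilon)}}$ for a slightly smaller constant $c'>0$, and can be absorbed into the constant $c$ appearing in the statement. A careful inspection of the derivation of (\ref{ee3}) moreover shows that it holds for any collection of operators $\mathcal{D}_{h_j,k_j}^{l_j}$ with $h_j\in[1..n]$, so the restriction $g<n$ written there creates no genuine gap in the argument.
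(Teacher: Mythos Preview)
Your proposal is correct and follows exactly the route the paper indicates: the paper's own proof is the single sentence ``The lemma follows from (\ref{ee1}), (\ref{ee2}), Lemma~\ref{le52}, (\ref{ee3}) and Lemma~\ref{le61},'' and you have simply unpacked how those pieces fit together. Your additional remarks on error aggregation and on the inessential restriction $g<n$ in (\ref{ee3}) are accurate refinements of what the paper leaves implicit.
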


\begin{proof}The proposition follows from (\ref{ee1}), (\ref{ee2}), Lemma (\ref{le52}), (\ref{ee3}) and Lemma \ref{le61}. 
\end{proof}

\section{Convergence to the spectra of GUE matrices}\label{sect:sgue}

The main aim of this section is to prove the following proposition:
\begin{proposition}\label{p71}Consider perfect matchings on a rail yard graph RYG$(l,r,\underline{a},\underline{b})$ satisfying Assumptions \ref{ap41}, \ref{ap428}, \ref{ap32}. Let
\begin{align*}
\left(\lambda_1,\ldots,\lambda_{N_{L,-}^{(>t,\epsilon)}}\right)
\end{align*}
be the random partition corresponding to dimer configurations adjacent to a column of odd vertices with abscissa $2t-1$.

Assume $t$ is fixed and is finite. 
Assume $1\leq h\leq n$. For random partition $(\lambda_i)_{i=I_{h-1}^{(>t)}+1}^{I_h^{(>t)}}$,
and $1\leq i\leq N_{L,-}^{(\epsilon,h)}$, define
\begin{align*}
b_{N_{L,-}^{(\epsilon,h)},i}
=\frac{\frac{\lambda_{i+I_{h-1}^{(>t)}}+N_{L,-}^{(\epsilon,h)}-i}{\sqrt{N_{L,-}^{(\epsilon,h)}}}-\sqrt{N_{L,-}^{(\epsilon,h)}}A_h}{B_h}
\end{align*}
where $A_h$, $B_h$ are given by (\ref{da1}) (\ref{db1}) (\ref{dah}), (\ref{dbh}).
Then the distribution of $\left\{b_{N_{L,-}^{(\epsilon,1)},i}\right\}_{i=1}^{N_{L,-}^{(\epsilon,h)}}$ converges to $\mathbb{GUE}_{N_{L,-}^{(\epsilon,h)}}$
\end{proposition}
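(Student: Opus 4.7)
The plan is to verify the GUE characterization of Lemma \ref{lgue} blockwise. By Proposition \ref{p62}, the $n$ blocks $(\lambda_i)_{i=I_{h-1}^{(>t)}+1}^{I_h^{(>t)}}$ are asymptotically independent, so it suffices to prove, for each fixed $h\in[n]$, that the rescaled block $(b_{N_{L,-}^{(\epsilon,h)},i})_{i=1}^{N_{L,-}^{(\epsilon,h)}}$ converges in distribution to the spectrum of an $N_{L,-}^{(\epsilon,h)}\times N_{L,-}^{(\epsilon,h)}$ GUE matrix, and then to assemble the blocks using the independence from Proposition \ref{p62}. By Proposition \ref{le61}, up to exponentially small error the $h$-th block has law $\tilde\rho^t_{\lambda^{(h,\sigma_0)}}$, whose Schur generating function is tractable via the machinery of Section \ref{sec:gue}.

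First, I would invoke Lemma \ref{lgue}: it suffices to show that for any fixed real diagonal $P=\operatorname{diag}(p_1,\dots,p_{N_{L,-}^{(\epsilon,h)}})$,
\[
\mathbb{E}_{\tilde\rho^t_{\lambda^{(h,\sigma_0)}}}\int_{U(N_{L,-}^{(\epsilon,h)})}\exp\bigl(\operatorname{Tr}(PUQU^*)\bigr)\,dU \ \longrightarrow\ \exp\!\left(\tfrac12\operatorname{Tr}(P^2)\right),
\]
where $Q$ is the diagonal matrix of the rescaled $b_{N_{L,-}^{(\epsilon,h)},i}$. Via the HCIZ identity of Lemma \ref{hciz}, with parameters $a_i$ chosen to encode both the leading shift $\sqrt{N_{L,-}^{(\epsilon,h)}}A_h$ and the fluctuation scale $p_iB_h/\sqrt{N_{L,-}^{(\epsilon,h)}}$, this integral equals $s_\mu(e^{a_1},\dots,e^{a_{N_{L,-}^{(\epsilon,h)}}})/s_\mu(1,\dots,1)$ times the Jacobian $\prod_{i<j}(a_i-a_j)/(e^{a_i}-e^{a_j})$. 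Taking the expectation converts the left-hand side into an evaluation of the Schur generating function $\mathcal{S}_{\tilde\rho^t_{\lambda^{(h,\sigma_0)}},\mathbf{x}^{(L,-,>t,h)}}$ at the point $(x_{j,\epsilon}e^{p_jB_h/\sqrt{N_{L,-}^{(\epsilon,h)}}})$.

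Second, by formula (\ref{sg1}) combined with Lemma \ref{l46}, the original Schur generating function factorizes and the factor indexed by $h$ involves $s_{\lambda^{(h,\sigma_0)}}(\underline{u}^{(h)},1,\dots,1)/s_{\lambda^{(h,\sigma_0)}}(1,\dots,1)$. Since $\lambda^{(h,\sigma_0)}$ has the interval structure (\ref{abt})--(\ref{abi})---namely a single solid block $[A_h,B_h]$ in the shifted coordinates---this ratio admits the contour-integral representation from \cite{ZL18} recalled in Section \ref{sect:A}. A steepest-descent analysis at $a_i=c_0+p_iB_h/\sqrt{N_{L,-}^{(\epsilon,h)}}$ identifies the constants $A_h,B_h$ as the location and curvature of the critical point, and the quadratic expansion around that point produces precisely $\exp(\tfrac12\sum p_i^2)$. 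The HCIZ Jacobian $\prod_{i<j}(a_i-a_j)/(e^{a_i}-e^{a_j})$ is designed to cancel the Vandermonde factor from the determinantal form of $s_\mu$, so the cancellations in the final limit are combinatorially natural.

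The main obstacle will be the uniform error control in the steepest-descent analysis: one must track the asymptotics of the Schur function at a block-structured partition uniformly in the spectral parameters $p_i$, verify the claimed cancellations between the HCIZ Jacobian and the Vandermondes, and rule out contributions from permutations $\sigma\neq\sigma_0$ (handled by Proposition \ref{p436}) as well as from parts of the partition that may wander outside $[\sqrt{N_{L,-}^{(\epsilon,h)}}A_h\pm O(\sqrt{N_{L,-}^{(\epsilon,h)}})]$. The moment identities (\ref{ee3}) of Section \ref{sect:md}, together with the decay estimates of Lemma \ref{l412} and Lemma \ref{le409}, provide both the probabilistic tail control and a useful cross-check: joint moments of power-sum symmetric functions in $\lambda_i+\mathrm{shift}$ can be computed exactly, and after the rescaling by $\sqrt{N_{L,-}^{(\epsilon,h)}}$ they must match the corresponding moments of the GUE spectrum. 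If the analytic control in the saddle-point argument proves delicate, the moment route (combined with independence from Proposition \ref{p62}) furnishes an alternative path to the same conclusion.
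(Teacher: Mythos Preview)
Your reduction via Proposition \ref{le61} to the auxiliary measures $\tilde\rho^t_{\lambda^{(h,\sigma_0)}}$, and your plan to verify the criterion of Lemma \ref{lgue} through the HCIZ identity of Lemma \ref{hciz}, matches the paper's outline. The core analytic step, however, is handled quite differently.

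The paper does not run a steepest-descent or contour-integral argument on the Schur ratio. After writing the expectation of the $U(k)$ HCIZ integral as $\mathcal{S}_{\ol\rho^t_{\lambda^{(h,\sigma_0)}}}$ evaluated at $(e^{v_i/\sqrt N})$, and using Lemma \ref{l46} to identify this with $s_{\lambda^{(h,\sigma_0)}}(\underline u^{(h)},1,\dots,1)/s_{\lambda^{(h,\sigma_0)}}(1,\dots,1)$ (times the $\xi_{ij}/z_{ij}$ factors when $h=1$), the paper applies HCIZ \emph{a second time}, now over $U(N_{L,-}^{(\epsilon,h)})$, to convert this Schur ratio back into a matrix integral
\[
\int_{U(N_{L,-}^{(\epsilon,h)})}\exp\Bigl[\tfrac{1}{\sqrt{N_{L,-}^{(\epsilon,h)}}}\operatorname{Tr}(R_{N_{L,-}^{(\epsilon,h)}}UQ_{N_{L,-}^{(\epsilon,h)}}U^*)\Bigr]dU.
\]
The asymptotics of the latter are then read off directly from a known black-box result, Lemma \ref{lem:asympexpiz}, which gives the expansion in terms of the first two free cumulants $\psi_{1,h},\psi_{2,h}$ of the limiting measure $\mathbf m_h$ from Lemma \ref{cm}. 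Elementary Taylor expansions of the remaining logarithmic factors (the $\xi/z$ product and the Vandermonde Jacobians) produce the constants $A_h,B_h$ in (\ref{da1})--(\ref{dbh}), and Lemma \ref{lgue} closes the argument. No saddle-point analysis, tail control from Lemmas \ref{le409}--\ref{l412}, or cross-check via the moment identities (\ref{ee3}) is needed.

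Two specific inaccuracies in your sketch are worth flagging. First, Section \ref{sect:A} does not contain a contour-integral formula; Proposition \ref{p437} is a combinatorial sum over cosets $[\Sigma_N/\Sigma_N^X]^r$, which is what drives Proposition \ref{p436} but is not directly amenable to steepest descent. Second, $\lambda^{(h,\sigma_0)}$ is in general not a single solid block: by Lemma \ref{cm} its limiting counting measure $\mathbf m_h$ is supported on $|J_h|=d_{h+1}-d_h$ intervals. Neither point is fatal to an analytic approach in principle, but both make your proposed route substantially harder than the paper's, which sidesteps them entirely by invoking Lemma \ref{lem:asympexpiz}.
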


Given Proposition \ref{le61}, it suffices to prove the corresponding result for random partitions with distributions $\tilde{\rho}_{\lambda^{(h,\si_0)}}^t$.

It is straightforward to verify the following lemma:
\begin{lemma}\label{cm}Let $\{J_i\}_{i\in[n]}$ be defined as in Assumption \ref{ap428}.
Under Assumptions \ref{ap428} \ref{ap36}, assume that 
\begin{eqnarray*}
J_i=\left\{\begin{array}{cc}\{d_i,d_i+1,\ldots,d_{i+1}-1\}& \mathrm{if}\ 1\leq i\leq n-1\\ \{d_n, d_n+1,\ldots,s\}& \mathrm{if}\ i=n \end{array}\right.
\end{eqnarray*}
where $d_1,\ldots,d_n$ are positive integers satisfying
\begin{align*}
1=d_1<d_2<\ldots <d_n\leq s
\end{align*}
Let
\begin{align*}
d_{n+1}:=s+1.
\end{align*}
For $j\in[s]$, let $a_j,b_j$ be given by (\ref{abi}). 
If $i\in[n]$, for $0\leq k\leq d_{n+1}-d_n-1$, let
\begin{equation}
\beta_{i,k}
=\sum_{j=1}^{s-d_i-k}\frac{a_{j+1}-b_j}{\theta_i}
+\sum_{r=d_i}^{d_i+k-1}\frac{b_r-a_r}{\theta_i}
\label{dbik}
\end{equation}
and let
\begin{equation}
\gamma_{i,k}=\beta_{i,k}+\frac{b_{s-d_i-k+1}-a_{s-d_i-k+1}}{\theta_i}
=\frac{b_{s-d_i-k+1}-a_1}{\theta_i}\label{dcik}
\end{equation}

Then for $i\in[n]$ the counting measures of $\lambda^{(i,\si_0)}$ converge weakly to a limit measure $\bm_i$ as $\epsilon\rightarrow 0$. Moreover,
if $i\in[n]$, for $0\leq k\leq d_{i+1}-d_i-1$, the limit counting measure $\mathbf{m}_i$ is a probability measure on $[\beta_{i,1},\gamma_{i,d_{i+1}-d_i-1}]$ with density given by
\begin{eqnarray*}
\frac{d\mathbf{m}_i}{dy}=\left\{\begin{array}{cc}1,& \mathrm{if}\ \beta_{i,k}< y< \gamma_{i,k};\\0,& \mathrm{if}\ \gamma_{i,k}\leq y\leq \beta_{i,k+1}. \end{array}\right.
\end{eqnarray*}
\end{lemma}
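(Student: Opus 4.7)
The key observation is that $\lambda^{(i,\sigma_0)}$ is a \emph{deterministic} partition, completely determined by the left boundary condition $\lambda^{(l,\epsilon)}$ and the ordering $\sigma_0$ of the weights: it records those parts $\lambda_q^{(l)}$ for which $x_{\sigma_0(q),\epsilon}$ is the $i$-th greatest weight. So the asserted convergence is a purely combinatorial statement about the empirical distribution of an explicitly described sequence, with no probabilistic input at this stage.

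I would first unpack the structure of $\lambda^{(i,\sigma_0)}$ by combining the block structure of $\lambda^{(l,\epsilon)}$ in Assumption \ref{ap428}(3)--(4) with the hypothesis $J_i=\{d_i,\ldots,d_{i+1}-1\}$ and condition (a) in Assumption \ref{ap428}(5). Since $\mu_t^{(\epsilon)}$ appears in $\lambda^{(l,\epsilon)}$ with multiplicity $K_{s-t+1}^{(\epsilon)}$, the values that enter $\lambda^{(i,\sigma_0)}$ are precisely $\mu_{d_i}^{(\epsilon)}>\mu_{d_i+1}^{(\epsilon)}>\ldots>\mu_{d_{i+1}-1}^{(\epsilon)}$, with those same multiplicities. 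So $\lambda^{(i,\sigma_0)}$ is explicitly a block-constant decreasing sequence of length $N_{L,-,i}^{(\epsilon)}$.

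Next I would pass to the particle coordinates $\Omega^{(i)}_k=\lambda^{(i,\sigma_0)}_{N_{L,-,i}^{(\epsilon)}-k+1}-N_{L,-,i}^{(\epsilon)}+k-1$, exactly as in (\ref{abt}) but for the restricted partition. Because $\lambda^{(i,\sigma_0)}$ is block-constant, the $\Omega^{(i)}_k$ split into $d_{i+1}-d_i$ runs of \emph{consecutive} integers, each of length equal to the corresponding multiplicity $K_{s-t+1}^{(\epsilon)}$, separated by deterministic gaps whose size is $(\mu_{t}^{(\epsilon)}-\mu_{t+1}^{(\epsilon)})$ minus the block length. The counting measure I would work with is $\bm_i^{(\epsilon)}:=\frac{1}{N_{L,-,i}^{(\epsilon)}}\sum_k\delta_{\Omega^{(i)}_k/N_{L,-,i}^{(\epsilon)}}$, which is supported on a disjoint union of $d_{i+1}-d_i$ arithmetic-progression-like blocks of spacing $1/N_{L,-,i}^{(\epsilon)}$ and density $1$ in the limit.

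Finally I would compute the limit block by block. Using (\ref{abi}) and Assumption \ref{ap36}, $K_r^{(\epsilon)}/N_{L,-}^{(\epsilon)}\to b_r-a_r$, $N_{L,-,i}^{(\epsilon)}/N_{L,-}^{(\epsilon)}\to \theta_i$, and the identities
\[
b_r=\lim_{\epsilon\to 0}\tfrac{\mu_{s-r+1}^{(\epsilon)}+\sum_{t=1}^{r}K_t^{(\epsilon)}}{N_{L,-}^{(\epsilon)}}-1,\qquad a_r=\lim_{\epsilon\to 0}\tfrac{\mu_{s-r+1}^{(\epsilon)}+\sum_{t=1}^{r-1}K_t^{(\epsilon)}}{N_{L,-}^{(\epsilon)}}-1
\]
stated immediately after (\ref{abi}), a telescoping computation yields $\mu_{s-r+1}^{(\epsilon)}/N_{L,-}^{(\epsilon)}\to a_r+1-\sum_{t<r}(b_t-a_t)$, and hence the left endpoint of the $k$-th block of $\Omega^{(i)}/N_{L,-,i}^{(\epsilon)}$ (with $k$ counted from the top block downward, matching the reversal introduced by passing from $\lambda$-coordinates to $\Omega$-coordinates) converges to $\beta_{i,k}$ as defined in (\ref{dbik}), while its right endpoint converges to $\gamma_{i,k}$ as in (\ref{dcik}). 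On each limit block the density is $1$, on the gaps it is $0$, giving the density claimed in Lemma \ref{cm}. Weak convergence of $\bm_i^{(\epsilon)}$ to $\bm_i$ follows immediately because within each block the points are $(1/N_{L,-,i}^{(\epsilon)})$-spaced in an interval whose endpoints converge.

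The only real obstacle is the bookkeeping around the index $k$: the map between "$t$-th distinct value $\mu_t^{(\epsilon)}$", "$r$-th block in the $\Omega$-coordinates (as in (\ref{abt}))", and the shift $s-d_i-k+1$ appearing in (\ref{dbik})--(\ref{dcik}) must be tracked carefully, because the passage $\lambda\rightsquigarrow\Omega$ reverses the order and the restriction to $J_i$ reverses it again within the sub-partition. There is no analytic content to worry about — Lemma \ref{cm} is the statement that a deterministic staircase converges to a piecewise-constant density — so once the indexing is pinned down the proof reduces to substituting the limits of $\mu_t^{(\epsilon)}/N_{L,-}^{(\epsilon)}$ and $K_t^{(\epsilon)}/N_{L,-}^{(\epsilon)}$ into the block-endpoint formulas.
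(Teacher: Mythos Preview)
Your proposal is correct and matches the paper's approach: the paper provides no proof beyond the sentence ``It is straightforward to verify the following lemma,'' and your sketch supplies exactly the direct combinatorial verification that sentence alludes to. The only content is indeed the bookkeeping you identify, and your outline handles it correctly.
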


As we have seen in Lemma \ref{cm}, under Assumption \ref{ap428}, as $\epsilon\rightarrow 0$, the counting measures of  $\phi^{(i,\si_0)}(N_{L,-}^{(\epsilon)})$ converges weakly to a limit measure $\bm_i$.

Let
\begin{equation*}
  P_{k}=\mathrm{diag}\left[v_1,\ldots,v_k\right],\quad
  Q_k=\mathrm{diag}\left[b_{k1},\ldots,b_{kk}\right].
\end{equation*}

By Lemma~\ref{hciz}, we have for any $k$:
\begin{equation}
  \label{lg0}
  \int_{U(k)}\exp\left[\mathrm{Tr}\left(P_k U Q_kU^*\right)\right]dU=
  \left[\prod_{1\leq i<j\leq k}\frac{e^{v_i}-e^{v_j}}{v_i-v_j}\right]
  \frac{s_{\lambda^{k}(N)}(e^{v_1},\ldots,e^{v_k})}{s_{\lambda^k(N)}(1,\ldots,1)}
\end{equation}
For $1\leq h\leq n$, let $\tilde{\rho}^t_{\lambda^{(h,\si_0)}}$ be given as in Lemma \ref{l46}; and let $\ol{\rho}^t_{\lambda^{(h,\si_0)}}$ be the restriction of $\tilde{\rho}^t_{\lambda^{(h,\si_0)}}$ to the first $N_{L,-}^{(>t,\epsilon,h)}=k$ parts.
Then
\begin{multline}
\label{lg1}
\sum_{\lambda\in \YY_{N_{L,-}^{(>t,\epsilon,h)}}}\ol{\rho}^t_{\lambda^{(h,\si_0)}}\int_{U(k)}\exp\left[\frac{1}{\sqrt{N}}\mathrm{Tr}\left(P_kUQ_kU^*\right)\right]dU\\
=\prod_{1\leq i<j\leq k}
\left[\sqrt{N}\frac{e^{v_i/\sqrt{N}}-e^{v_j/\sqrt{N}}}{v_i-v_j}\right] \times
\sum_{\lambda\in \YY_{N_{L,-}^{(>t,\epsilon,h)}}}\ol{\rho}^t_{\lambda^{(h,\si_0)}}
\frac{s_{\lambda}(e^{v_1/\sqrt{N}},\dotsc e^{v_k
  /\sqrt{N}})}{s_{\lambda}(1,\dotsc,1)}\\
  =
  \prod_{1\leq i<j\leq k}
\left[\sqrt{N}\frac{e^{v_i/\sqrt{N}}-e^{v_j/\sqrt{N}}}{v_i-v_j}\right] \times
  \mathcal{S}_{\ol{\rho}^t_{\lambda^{(h,\si_0)}}}(e^{v_1/\sqrt{N}},\dotsc,e^{v_k/\sqrt{N}})
\end{multline}
note that the denominator of the last fraction being exactly 
$\mathcal{S}_{\ol{\rho}^t_{\lambda^{(h,\si_0)}}}(e^{v_1/\sqrt{N}},\dotsc,e^{v_k/\sqrt{N}})$
by Definition~\ref{df21}.

We then use Lemma~\ref{l46} to re-express the Schur generating function:
\begin{align*}
\mathcal{S}_{\ol{\rho}^t_{\lambda^{(1,\si_0)}}}(\mathbf{u}^{(L,-,>t,1)})
=
\frac{s_{\lambda^{(1,\sigma)}}\left(\underline{u}^{(L,-,>t,1)},1^{N_{L,-}^{(\leq t,1)}}\right)}{s_{\lambda^{(1,\sigma)}}\left(1^{N_{L,-}^{(\leq t,1)}}\right)}
\prod_{i\in[l..t],b_i=+;}\prod_{j\in[t+1..r],a_j=L,b_j=-;x_{j,\epsilon}=x_{\si_0(1)},\epsilon}\frac{\xi_{ij}}{z_{ij}},
\end{align*}
and for $2\leq h\leq k$
\begin{align*}
\mathcal{S}_{\ol{\rho}^t_{\lambda^{(h,\si_0)}}}(\mathbf{u}^{(L,-,>t,h)})
=
\frac{s_{\lambda^{(h,\sigma_0)}}\left(\underline{u}^{(L,-,>t,h)},1^{N_{L,-}^{(\leq t,h)}}\right)}{s_{\lambda^{(h,\sigma_0)}}\left(1^{N_{L,-}^{(\leq t,h)}}\right)},
\end{align*}

This ratio of Schur functions
appears also when using again Lemma~\ref{hciz} to rewrite the matrix
integral over $U(N_{L,-}^{(h,\si_0)})$ this time.
More precisely, let
\begin{align*}
&R_{N_{L,-}^{(\epsilon,h)}}=\mathrm{diag}\left[v_1,\dotsc,v_k,0,\dotsc,0\right],
\\
&Q_{N_{L,-}^{(\epsilon,h)}}=\mathrm{diag}\left[\lambda^{(h,\si_0)}_1+N_{L,-}^{(\epsilon,h)}-1,\lambda^{(h,\si_0)}_2+N_{L,-}^{(\epsilon,h)}-2\ldots,\lambda^{(h,\si_0)}_{N_{L,-}^{(\epsilon,h)}}\right].
\end{align*}
For $1\leq i\leq k$, let $\underline{u}_i=e^{\frac{v_i}{\sqrt{N_{L,-}^{\epsilon,h}}}}$.
Then,
\begin{small}
\begin{align}
  \label{lg2b}
&\frac{%
s_{\lambda^{(h,\si_0)}}\left(1,\ldots,1,\underline{u}^{(L,-,>t,h)}\right)
}{%
s_{\lambda^{(h,\si_0)}}\left(1,\dotsc,1\right)}
=
\int_{U(N_{L,-}^{(\epsilon,h)})}\exp\left[\frac{1}{\sqrt{N_{L,-}^{(\epsilon,h)}}} \mathrm{Tr}\left(R_{N_{L,-}^{(\epsilon,h)}} U Q_{N_{L,-}^{(\epsilon,h)}} U^*\right)\right]dU \times\\
&\left[\prod_{1\leq i<j\leq k}\!\!\!
\sqrt{N_{L,-}^{(\epsilon,h)}}\frac{e^{\frac{v_i}{\sqrt{N_{L,-}^{(\epsilon,h)}}}}-e^{\frac{v_j}{\sqrt{N_{L,-}^{(\epsilon,h)}}}}}{v_i-v_j}\right]^{-1}\!\!\!
\left[\prod_{\substack{{1\leq i\leq k}\\{1\leq j\leq N_{L,-}^{(\epsilon,h)}-k}}}\!\!\!
  \frac{e^{\frac{v_i}{\sqrt{N_{L,-}^{(\epsilon,h)}}}}-1}{\frac{v_i}{\sqrt{N_{L,-}^{(\epsilon,h)}}}}\right]^{-1}\!\!\!\notag
\end{align}
\end{small}

Plugging \eqref{lg2b}
into~\eqref{lg1}, one gets if $h\geq 2$:
\begin{small}
\begin{align*}
&\log\sum_{\lambda^{(\epsilon,h)}\in \YY_{N_{L,-}^{(\epsilon,h)}}}
\ol{\rho}^t_{N_{L,-}^{(\epsilon,h)}}\int_{U(k)} 
\exp\left[\frac{1}{\sqrt{N_{L,-}^{(\epsilon,h)}}}\mathrm{Tr}\left(P_kUQ_kU^*\right)\right]dU\\
&=\log\int_{U(N_{L,-}^{(h,\si_0)})}\exp\left[\frac{1}{\sqrt{N_{L,-}^{(h,\si_0)}}}\mathrm{Tr}\left(R_{N_{L,-}^{(h,\si_0)}}UQ_{N_{L,-}^{(h,\si_0)}}U^*\right)\right]dU-\sum_{\substack{{1\leq i\leq k}\\{1\leq j\leq N_{L,-}^{(h,\si_0)}-k}}}
\log\left[
  \frac{e^{\frac{v_i}{\sqrt{N_{L,-}^{(h,\si_0)}}}}-1}{\frac{v_i}{\sqrt{N_{L,-}^{(h,\si_0)}}}}
\right]
\\
&-\sum_{1\leq i<j\leq k}
\log\left[
  \sqrt{N_{L,-}^{(h,\si_0)}}\frac{e^{\frac{v_i}{\sqrt{N_{L,-}^{(h,\si_0)}}}}-e^{\frac{v_j}{\sqrt{N_{L,-}^{(h,\si_0)}}}}}{v_i-v_j}
\right].
\end{align*}
\end{small}
If $h=1$,
\begin{small}
\begin{align*}
&\log\sum_{\lambda^{(1,\si_0)}\in \YY_{N_{L,-}^{(1,\si_0)}}}
\ol{\rho}^t_{N_{L,-}^{(1,\si_0)}}\int_{U(k)} 
\exp\left[\frac{1}{\sqrt{N_{L,-}^{(\epsilon,h)}}}\mathrm{Tr}\left(P_kUQ_kU^*\right)\right]dU\\
&=\log\int_{U(N_{L,-}^{(\epsilon,h)})}\exp\left[\frac{1}{\sqrt{N_{L,-}^{(\epsilon,h)}}}\mathrm{Tr}\left(R_{N_{L,-}^{(\epsilon,h)}}UQ_{N_{L,-}^{(\epsilon,h)}}U^*\right)\right]dU-\sum_{\substack{{1\leq i\leq k}\\{1\leq j\leq N_{L,-}^{(\epsilon,h)}-k}}}
\log\left[
  \frac{e^{\frac{v_i}{\sqrt{N_{L,-}^{(\epsilon,h)}}}}-1}{\frac{v_i}{\sqrt{N_{L,-}^{(\epsilon,h)}}}}
\right]
\\
&-\sum_{1\leq i<j\leq k}
\log\left[
  \sqrt{N_{L,-}^{(\epsilon,h)}}\frac{e^{\frac{v_i}{\sqrt{N_{L,-}^{(\epsilon,h)}}}}-e^{\frac{v_j}{\sqrt{N_{L,-}^{(\epsilon,h)}}}}}{v_i-v_j}
\right]+\sum_{i\in[l..t],b_i=+;}\sum_{j\in[t+1..r],a_j=L,b_j=-;x_{j,\epsilon}=x_{\si_0(1)},\epsilon}\log\frac{\xi_{ij}}{z_{ij}}
\end{align*}
\end{small}

Expand the sums using the  two expansions
$\log\frac{e^u-e^v}{u-v}=\frac{u+v}{2}+\frac{(u-v)^2}{24}+O(|u|^3+|v|^3)$, and
$\log(\frac{1+ye^v}{1+y})=\frac{y}{1+y}v+\frac{y}{2(1+y)^2}v^2+O(|v|^3)$ as $u$ and $v$
tend to 0, to get that:
\begin{small}
\begin{align*}
&\sum_{\substack{{1\leq i\leq k}\\{1\leq j\leq N_{L,-}^{(\epsilon,h)}-k}}}
\log\left[
  \frac{%
    e^{v_i/\sqrt{N_{L,-}^{(\epsilon,h)}}}-1
  }{%
    \frac{v_i}{\sqrt{N_{L,-}^{(\epsilon,h)}}}
  }
\right]=\left(N_{L,-}^{(\epsilon,h)}-k\right)\sum_{1\leq i\leq k}\left[\frac{v_i}{2\sqrt{N_{L,-1}^{(\epsilon,h)}}}+\frac{v_i^2}{24N_{L,-}^{(\epsilon,h)}}\right]+O([N_{L,-}^{(\epsilon,h
)}]^{-\frac{1}{2}}),\\
&\sum_{1\leq i<j\leq k}
\log\left[
  \sqrt{N_{L,-}^{(h,\si_0)}}\frac{e^{\frac{v_i}{\sqrt{N_{L,-}^{(h,\si_0)}}}}-e^{\frac{v_j}{\sqrt{N_{L,-}^{(h,\si_0)}}}}}{v_i-v_j}
\right]=O([N_{L,-}^{(\epsilon,h
)}]^{-\frac{1}{2}}).
\end{align*}
Moreover,
\begin{align*}
&\sum_{i\in[l..t],b_i=+;}\sum_{j\in[t+1..r],a_j=L,b_j=-;x_{j,\epsilon}=x_{\si_0(1)},\epsilon}\log\frac{\xi_{ij}}{z_{ij}}\\
=&\sum_{i\in[l..t],a_i=L;b_i=+;}\sum_{j=1}^k\log\frac{1-x_ix_{\si_0(1)}}{1-x_ix_{\si_0(1)}e^{\frac{v_j}{\sqrt{N_{L,-}^{(\epsilon,1)}}}}}+\sum_{i\in[l..t],a_i=L;b_i=+;}\sum_{j=1}^{k}\log\frac{1+x_ix_{\si_0(1)}e^{\frac{v_j}{\sqrt{N_{L,-}^{(\epsilon,1)}}}}}{1+x_ix_{\si_0(1)}}\\
=&\sum_{i\in[l..t],a_i=L;b_i=+;}\sum_{j=1}^k
\left[\frac{x_ix_{\si_0(1)}}{1-x_ix_{\si_0(1)}}\frac{v_j}{\sqrt{N_{L,-}^{(\epsilon,1)}}}
+\frac{x_ix_{\si_0(1)}}{2(1-x_ix_{\si_0(1)})^2}\frac{v_j^2}{N_{L,-}^{(\epsilon,1)}}
+O([N_{L,-}^{(\epsilon,1)}]^{-\frac{3}{2}})\right]\\
+&\sum_{i\in[l..t],a_i=L;b_i=+;}\sum_{j=1}^k
\left[\frac{x_ix_{\si_0(1)}}{1+x_ix_{\si_0(1)}}\frac{v_j}{\sqrt{N_{L,-}^{(\epsilon,1)}}}
+\frac{x_ix_{\si_0(1)}}{2(1+x_ix_{\si_0(1)})^2}\frac{v_j^2}{N_{L,-}^{(\epsilon,1)}}
+O([N_{L,-}^{(\epsilon,1)}]^{-\frac{3}{2}})\right]
\end{align*}
\end{small}

Then we use Lemma~\ref{lem:asympexpiz} below to get the asymptotic behavior of the
integral over $U(N_{L,-}^{(\epsilon,h)})$, to obtain that
\begin{align*}
 & \log\int_{U(N_{L,-}^{(\epsilon,h)})}\exp\left[\frac{1}{\sqrt{N_{L,-}^{(\epsilon,h)}}}\mathrm{Tr}\left(R_{N_{L,-}^{(\epsilon,h)}}UQ_{N_{L,-}^{(\epsilon,h)}}U^*\right)\right]dU\\
 &=\psi_{1,h}
  \left(\sum_{i=1}^k v_i\right)+\frac{\psi_{2,h}-\psi_{1,h}^2}{2}\left(\sum_{i=1}^k
  v_i^2\right)+o(1),
\end{align*}
where $\psi_{1,h}$ and $\psi_{2,h}$ are respectively the first and second moments of the
limiting measure $\mathbf{m}_{h}$ (see Lemma \ref{cm}), as $N_{L,-}^{(\epsilon,h)}$ goes to infinity.
Bringing all the pieces together, and defining
\begin{align}
  A_1 &= \psi_{1,1}-\frac{1}{2}+\sum_{i\in[l..t],a_i=L;b_i=+;}\frac{x_ix_{\si_0(1)}}{1-x_ix_{\si_0(1)}}+\sum_{i\in[l..t],a_i=L;b_i=+;}\frac{x_ix_{\si_0(1)}}{1+x_ix_{\si_0(1)}},\label{da1}
\\
    B_1 &= \psi_{2,1}-\psi_{1,1}^2-\frac{1}{12}+\sum_{i\in[l..t],a_i=L;b_i=+;}\frac{x_ix_{\si_0(1)}}{[1+x_ix_{\si_0(1)}]^2}+\sum_{i\in[l..t],a_i=L;b_i=+;}\frac{x_ix_{\si_0(1)}}{[1-x_ix_{\si_0(1)}]^2},\label{db1}
\end{align}
and for $h\geq 2$, define
\begin{align}
  A_h &= \psi_{1,h}-\frac{1}{2},\label{dah}
\\
    B_h &= \psi_{2,h}-\psi_{1,h}^2,\label{dbh}
\end{align}
one finally obtain that
\begin{equation*}
  \int_{U(k)}\exp\left[\frac{1}{\sqrt{N_{L,-}^{(\epsilon,h)}}}\mathrm{Tr}\left(P_k U Q_kU^*\right)\right]dU=
  \exp\left(\sqrt{N_{L,-}^{(\epsilon,h)}}A_h\sum_i v_i+
    \frac{1}{2}B_h \sum_i v_i^2+o(1)\right).  
\end{equation*}
Therefore, according to Lemma~\ref{lgue}, the distribution of the diagonal
coefficients  of
\begin{equation*}
\frac{1}{\sqrt{N_{L,-}^{(\epsilon,h)}}B_h}(Q_k-\sqrt{N_{L,-}^{(\epsilon,h)}} A_h \operatorname{Id}_k)
\end{equation*}
which are exactly the
$\tilde{b}_{kl}^{(N)}$, converges to $\PP_{\GUE_k}$.

\begin{lemma}
  \label{lem:asympexpiz}
  Let $k\in \NN$ be fixed.
  For any $N$, let $\omega(N)\in \GT_N^+$ and
  \begin{align*}
    q_i^{(N)}&=\omega_i(N)+N-i,\quad
    Q_N=\operatorname{diag}\left[q_1^{(N)},\dotsc, q_N^{N}\right],\\
    P_N&=\mathrm{diag}\left[
    v_1,\ldots,v_k,0,\ldots,0\right].
\end{align*}
Assume that 
\begin{itemize}
\item $\left(\omega(N)\right)_{N\in\NN}$ is a regular sequence of signatures,
\item as $N\rightarrow\infty$, the counting measures $\bm_{\omega(N)}$ converge
  weakly to $\bm_{\omega}$,
\end{itemize}
 Then as $N\rightarrow\infty$, we have the following asymptotic expansion
\begin{equation}
\log \int_{U(N)} \exp\left[\frac{1}{\sqrt{N}}\mathrm{Tr}\left(P_NU Q_N U^*\right)\right]dU
= K_1p_1(v_1,\ldots,v_k) N^{1/2} + \frac{K_2}{2!}p_2(v_1,\ldots,v_k)+o(1)
\label{eq:asymptexpiz}
\end{equation}
where $K_d$ is $(d-1)!$ multiplying the $d$th free cumulant of the measure
$\bm_{\omega}$ (in particular we have $K_1=\psi_1, K_2=\psi_2-\psi_1^2$)
and the $p_d$ is the $d$th power sum:
\begin{equation*}
p_d(v_1,\ldots,v_k)=\sum_{i=1}^{k}v_i^d.
\end{equation*}
\end{lemma}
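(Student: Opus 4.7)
The plan is to combine the Harish--Chandra--Itzykson--Zuber identity of Lemma \ref{hciz} with the asymptotic expansion of normalized Schur functions along regular sequences of signatures, the latter being the main technical input developed in \cite{ZL18} and reviewed in the appendix. First I would set $a_i = v_i/\sqrt{N}$ for $1 \leq i \leq k$ and $a_i = 0$ for $k < i \leq N$. Since this makes many of the $a_i$'s coincide, I would apply Lemma \ref{hciz} first for small distinct perturbations $a_i = \epsilon_i$ for $i > k$, and then pass to the limit $\epsilon_i \to 0$ using continuity of the Schur function in its arguments. This yields the identity
\begin{equation*}
\int_{U(N)}\exp\!\bigl[\tfrac{1}{\sqrt{N}}\mathrm{Tr}(P_N U Q_N U^*)\bigr]\,dU = \frac{s_{\omega(N)}\bigl(e^{v_1/\sqrt{N}},\ldots,e^{v_k/\sqrt{N}},1,\ldots,1\bigr)}{s_{\omega(N)}(1,\ldots,1)}\cdot \prod_{1\leq i<j\leq N} \frac{e^{a_i}-e^{a_j}}{a_i-a_j}.
\end{equation*}

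Next I would expand the Vandermonde-like prefactor using $\log\bigl(\tfrac{e^u-1}{u}\bigr) = \tfrac{u}{2}+\tfrac{u^2}{24}+O(u^3)$ and $\log\bigl(\tfrac{e^u-e^w}{u-w}\bigr)=\tfrac{u+w}{2}+\tfrac{(u-w)^2}{24}+O(|u|^3+|w|^3)$. The product splits into three types of pairs: the $k(N-k)$ mixed pairs $(v_i/\sqrt{N},0)$, whose multiplicity in $j>k$ produces the dominant $O(\sqrt{N})$ contribution to $p_1$ and an $O(1)$ contribution to $p_2$; the $\binom{k}{2}$ pairs within $\{v_i/\sqrt{N}\}$, which contribute only $o(1)$; and the zero--zero pairs, which contribute $0$ via L'Hospital's rule. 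The outcome is a closed-form expansion of the logarithm of the prefactor to order $o(1)$, depending only on $p_1(v)$ and $p_2(v)$.

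The central input is the asymptotic expansion of the normalized Schur function. For a regular sequence $\omega(N)$ whose counting measures converge weakly to $\bm_\omega$, one has an expansion of the form
\begin{equation*}
\log \frac{s_{\omega(N)}\bigl(e^{v_1/\sqrt{N}},\ldots,e^{v_k/\sqrt{N}},1,\ldots,1\bigr)}{s_{\omega(N)}(1,\ldots,1)}
= \sqrt{N}\,C_1\, p_1(v) + \tfrac{1}{2}\,C_2\, p_2(v) + o(1),
\end{equation*}
with no $v_iv_j$ cross terms at the $O(1)$ level, and with $C_1,C_2$ explicit polynomials in the moments $\psi_1,\psi_2$ of $\bm_\omega$. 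This expansion is derived from the combinatorial formula for Schur functions proved in \cite{ZL18} (recalled in Appendix A) via saddle-point analysis on the attendant contour integrals, or equivalently from the Bufetov--Gorin-type expansion of normalized characters in terms of free cumulants. Adding this expansion to $\log$ of the Vandermonde prefactor and collecting powers of $N$ yields \eqref{eq:asymptexpiz}: the $\sqrt{N}$-coefficient of $p_1(v)$ becomes $K_1=\psi_1$ and the coefficient of $p_2(v)/2$ becomes $K_2=\psi_2-\psi_1^2$, which is $(2-1)!$ times the second free cumulant of $\bm_\omega$.

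The main obstacle will be the precise bookkeeping in the Schur expansion: establishing it uniformly in $v_1,\ldots,v_k$ in a complex neighborhood of the origin with error $o(1)$, and verifying that the only surviving $O(1)$ contribution is the diagonal power sum $p_2$. Higher free cumulants contribute only at orders $N^{-(d-2)/2}\,p_d$ for $d\geq 3$ and hence are absorbed into $o(1)$ under this $1/\sqrt{N}$ scaling, so the expansion truncates naturally at $p_2$. Once the Schur asymptotic is granted, all subsequent steps are elementary Taylor expansion and collection of terms.
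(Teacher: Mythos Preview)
The paper does not actually prove this lemma: its entire proof is the citation ``See \cite[Corollary~6]{JN14}; see also \cite{ggn}.'' So there is no argument in the paper to compare against. Your proposal, by contrast, sketches a genuine proof along the lines of what those references do: invert the HCIZ identity of Lemma~\ref{hciz} to express the unitary integral as a normalized Schur function times a Vandermonde-type ratio, Taylor-expand the ratio, and feed in the known asymptotics of normalized Schur functions along regular sequences. That strategy is sound and is essentially the Guionnet--Ma\"\i da / Bufetov--Gorin route.

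One caveat: your attribution of the Schur asymptotic to ``the combinatorial formula for Schur functions proved in \cite{ZL18} (recalled in Appendix~A) via saddle-point analysis'' is off. Proposition~\ref{p437} in the appendix decomposes $s_\lambda(w_1,\ldots,w_N)$ as a sum over cosets when the $x_i$ take finitely many \emph{distinct} repeated values; it is the tool the paper uses elsewhere to factor Schur generating functions under the piecewise weight assumptions, not to extract the $N\to\infty$ asymptotics of $s_{\omega(N)}(e^{v_1/\sqrt{N}},\ldots,e^{v_k/\sqrt{N}},1,\ldots,1)/s_{\omega(N)}(1,\ldots,1)$. For the latter you want either the contour-integral representation of the normalized Schur function and a genuine saddle-point argument (as in Guionnet--Ma\"\i da), or the Bufetov--Gorin free-cumulant expansion you also mention. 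Either of those delivers the needed expansion with the coefficients $K_d=(d-1)!\,\kappa_d(\bm_\omega)$; the appendix formula does not.
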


\begin{proof}
  See \cite[Corollary~6]{JN14}; see also
  \cite{ggn}).
\end{proof}

\appendix

\section{A formula to compute Schur functions}\label{sect:A}

Let $\lambda(N)\in \YY_N$. Let $\Sigma_N$ be the permutation group of $N$ elements and let $\sigma\in \Sigma_N$. Let 
\begin{eqnarray*}
X=(x_1,\ldots,x_N).
\end{eqnarray*}
Assume that there exists a positive integer $n\in[N]$ such that $x_1,...,x_n$ are pairwise distinct and $\{x_1,...,x_n\}=\{x_1,...,x_N\}$. For $j\in[N]$, let
\begin{eqnarray}
\eta_j^{\sigma}(N)=|\{k:k>j,x_{\sigma(k)}\neq x_{\sigma(j)}\}|.\label{et}
\end{eqnarray}
For $1\leq i\leq n$, let
\begin{eqnarray}
\Phi^{(i,\sigma)}(N)=\{\lambda_j(N)+\eta_j^{\sigma}(N):x_{\sigma(j)}=x_i\}\label{pis}
\end{eqnarray}
and let $\phi^{(i,\sigma)}(N)$ be the partition with length $|\{1\leq j\leq N: x_j=x_i\}|$ obtained by decreasingly ordering all the elements in $\Phi^{(i,\sigma)}(N)$. Let $\Sigma_N^{X}$ be the subgroup $\Sigma_N$ that preserves the value of $X$; more precisely
\begin{eqnarray*}
\Sigma_N^{X}=\{\sigma\in \Sigma_N: x_{\sigma(i)}=x_i,\ \mathrm{for}\ i\in[N]\}.
\end{eqnarray*}
Let $[\Sigma/\Sigma_N^X]^r$ be the collection of all the right cosets of $\Sigma_N^X$ in $\Sigma_N$. More precisely,
\begin{eqnarray*}
[\Sigma/\Sigma_N^X]^r=\{\Sigma_N^X\sigma:\sigma\in \Sigma_N\},
\end{eqnarray*}
where for each $\sigma\in \Sigma_N$
\begin{eqnarray*}
\Sigma_N^X\sigma=\{\xi\sigma:\xi\in \Sigma_N^X\}
\end{eqnarray*}
and $\xi\sigma\in \Sigma_N$ is defined by
\begin{eqnarray*}
\xi\sigma(k)=\xi(\sigma(k)),\ \mathrm{for}\ k\in[N].
\end{eqnarray*}

Let $k\in[N]$. Let
\begin{eqnarray}
\label{dw}w_{i}=\left\{\begin{array}{cc}u_i&\mathrm{if}\ 1\leq i\leq k\\x_i&\mathrm{if}\ k+1\leq i\leq N\end{array}\right.
\end{eqnarray}

\begin{proposition}\label{p437}Let $\{w_i\}_{i\in[N]}$ be given by (\ref{dw}). 
Define
\begin{eqnarray*}
\underline{u}^{(i)}=\left\{\frac{u_j}{x_i}:x_j=x_i\right\}
\end{eqnarray*}
Then we have the following formula
\begin{eqnarray}
&&\label{sws}s_{\lambda}(w_1,\ldots,w_N)\\
&=&\sum_{\ol{\sigma}\in[\Sigma_N/\Sigma_N^X]^r} \left(\prod_{i=1}^{n}x_i^{|\phi^{(i,\sigma)}(N)|}\right)\left(\prod_{i=1}^{n}s_{\phi^{(i,\sigma)}(N)}\left(\underline{u}^{(i)},1,\ldots,1\right)\right)\notag\\
&&\times\left(\prod_{i<j,x_{\sigma(i)}\neq x_{\sigma(j)}}\frac{1}{w_{\sigma(i)}-w_{\sigma(j)}}\right)\notag
\end{eqnarray}
where $\sigma\in \ol{\sigma}\cap \Sigma_N$ is a representative. 
\end{proposition}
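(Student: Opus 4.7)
My plan is to derive~\eqref{sws} from the Weyl symmetric form of the Schur function
\begin{equation*}
s_\lambda(y_1,\ldots,y_N)=\sum_{\sigma\in\Sigma_N}\frac{\prod_{i=1}^{N}y_{\sigma(i)}^{\lambda_i+N-i}}{\prod_{1\le i<j\le N}(y_{\sigma(i)}-y_{\sigma(j)})},
\end{equation*}
valid for pairwise distinct $y_i$ (a direct consequence of the bialternant formula via $\prod_{i<j}(y_{\sigma(i)}-y_{\sigma(j)})=\mathrm{sgn}(\sigma)\prod_{i<j}(y_i-y_j)$), by specialization $y_i\to w_i$. Since $s_\lambda$ is a polynomial, the left-hand side is automatically well-defined; on the right I introduce a perturbation $y_i=w_i+\varepsilon\zeta_i$ with generic $\zeta_i$ and take $\varepsilon\to 0$, grouping summands by right cosets of the stabilizer $\Sigma_N^X\cong\prod_{m=1}^{n}S_{|I_m|}$, where $I_m:=\{j:x_j=x_m\}$, so that singularities cancel within each coset.

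For a representative $\sigma$ of each coset $\bar\sigma$, the blocks $K_m^\sigma:=\{k:x_{\sigma(k)}=x_m\}$ and the cross-block pair set $\{(i,j):i<j,\ x_{\sigma(i)}\ne x_{\sigma(j)}\}$ are coset-invariant, since $x_{\xi\sigma(k)}=x_{\sigma(k)}$ for every $\xi\in\Sigma_N^X$. I then split the Vandermonde denominator of the symmetric formula into cross-block and in-block factors. A direct calculation, using that the multiset $\{w_j:j\in I_m\}$ is intrinsic to each block, shows that the cross-block product $\prod_{i<j,\ x_{\sigma(i)}\ne x_{\sigma(j)}}(w_{\sigma(i)}-w_{\sigma(j)})$ depends only on $\bar\sigma$ --- up to a sign fixed by the sets $K_m^\sigma$, it equals $\prod_{m<m'}\prod_{a\in I_m,b\in I_{m'}}(w_a-w_b)$. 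This factor is nonzero in the limit $\varepsilon\to 0$ and can therefore be pulled outside the inner sum $\sum_{\xi\in\Sigma_N^X}$.

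The remaining inner sum factorizes over blocks. Enumerating $K_m^\sigma=\{\iota_1^m<\cdots<\iota_{|I_m|}^m\}$ and substituting $\alpha_r=\xi\sigma(\iota_r^m)\in I_m$, the block-$m$ factor becomes
\begin{equation*}
\sum_{\alpha\in\mathrm{Sym}(I_m)}\frac{\prod_{r=1}^{|I_m|} y_{\alpha_r}^{\lambda_{\iota_r^m}+N-\iota_r^m}}{\prod_{1\le r_1<r_2\le|I_m|}(y_{\alpha_{r_1}}-y_{\alpha_{r_2}})}.
\end{equation*}
The identity $N-\iota_r^m=(|I_m|-r)+\eta_{\iota_r^m}^{\sigma}$ (immediate from~\eqref{et}) combined with the monotonicity of $r\mapsto\lambda_{\iota_r^m}+\eta_{\iota_r^m}^{\sigma}$ (the $\iota$'s strictly increase and $\lambda$ is weakly decreasing) shows that the exponents are exactly those of the bialternant for $s_{\phi^{(m,\sigma)}}$ evaluated at $\{y_j:j\in I_m\}$, with the parts of $\phi^{(m,\sigma)}$ already in decreasing order. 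Passing to the limit $\varepsilon\to 0$, using $\{w_j:j\in I_m\}=\{u_j:j\le k,\ x_j=x_m\}\cup\{x_m,\ldots,x_m\}$ together with homogeneity $s_\mu(cz_1,\ldots,cz_d)=c^{|\mu|}s_\mu(z_1,\ldots,z_d)$, produces $x_m^{|\phi^{(m,\sigma)}|}s_{\phi^{(m,\sigma)}}(\underline{u}^{(m)},1,\ldots,1)$. Assembling the cross-block factor with the $n$ in-block factors and summing over cosets gives~\eqref{sws}.

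The main obstacle I anticipate is the control of the $\varepsilon\to 0$ limit: each individual summand in the sum over $\sigma$ diverges because of the collapsing in-block denominators, but the sum over any one coset remains finite, and I must verify that the limit may be interchanged with the coset-sum so that the cross-block factor can legitimately be pulled outside the inner sum before the limit is taken. A secondary concern is sign bookkeeping: the signs arising in the coset-invariant form of the cross-block factor must cancel exactly against the signs implicit in reading off $\phi^{(m,\sigma)}$ from $\lambda_{\iota_r^m}+\eta_{\iota_r^m}^\sigma$ without reordering, so that the final formula in~\eqref{sws} carries no overall sign.
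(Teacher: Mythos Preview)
Your approach is correct. In the paper the proof is simply deferred to an external reference (Proposition~4.1 of \cite{zl21}), so no direct comparison is possible; your derivation from the Weyl symmetric form is the natural direct route, and presumably what the cited reference does as well.

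Both concerns you flag are non-issues. For the limit: once you have grouped by cosets, the inner sum over $\xi\in\Sigma_N^X$ is already, for \emph{generic} $y$, equal to $\bigl[\prod_{i<j,\,x_{\sigma(i)}\ne x_{\sigma(j)}}(y_{\sigma(i)}-y_{\sigma(j)})\bigr]^{-1}\prod_m s_{\phi^{(m,\sigma)}}(y_j:j\in I_m)$, which is a rational function with poles only on the cross-block coincidence locus. Thus the identity of rational functions holds on the Zariski-open set of pairwise distinct $y$'s, hence everywhere both sides are defined; specializing $y=w$ (generic $u$'s avoid cross-block coincidences) gives the result with no limiting argument needed. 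For the signs: the cross-block product $\prod_{i<j,\text{cross}}(y_{\xi\sigma(i)}-y_{\xi\sigma(j)})$ is in fact exactly equal to $\prod_{i<j,\text{cross}}(y_{\sigma(i)}-y_{\sigma(j)})$ for every $\xi\in\Sigma_N^X$, not merely up to sign --- it suffices to check this for a transposition $(a\ b)$ with $a,b\in I_m$, and then for each $j\notin K_m^\sigma$ the two cross-block pairs involving $j$ and $\{p,q\}=\{\sigma^{-1}(a),\sigma^{-1}(b)\}$ together contribute an even number of sign changes. And since $\lambda_j+N-j$ is strictly decreasing in $j$, the sequence $\lambda_{\iota_r^m}+\eta_{\iota_r^m}^\sigma=(\lambda_{\iota_r^m}+N-\iota_r^m)-(|I_m|-r)$ is already weakly decreasing in $r$, so $\phi^{(m,\sigma)}$ needs no reordering and no sign appears. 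The formula~\eqref{sws} therefore falls out with no residual sign to track.
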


\begin{proof}See Proposition 4.1 of \cite{Li21}.
\end{proof}

\begin{corollary}\label{p438}
\begin{eqnarray}
&&\label{sws1}s_{\lambda}(x_1,\ldots,x_N)\\
&=&\sum_{\ol{\sigma}\in[\Sigma_N/\Sigma_N^X]^r} \left(\prod_{i=1}^{n}x_i^{|\phi^{(i,\sigma)}(N)|}\right)\left(\prod_{i=1}^{n}s_{\phi^{(i,\sigma)}(N)}\left(1,\ldots,1\right)\right)\left(\prod_{i<j,x_{\sigma(i)}\neq x_{\sigma(j)}}\frac{1}{x_{\sigma(i)}-x_{\sigma(j)}}\right)\notag
\end{eqnarray}
where $\sigma\in \ol{\sigma}\cap \Sigma_N$ is a representative. 
\end{corollary}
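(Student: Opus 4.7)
The plan is to obtain Corollary \ref{p438} as a direct specialization of Proposition \ref{p437}. I would take $k = 0$ in the statement of the proposition, so that the vector $(w_1,\ldots,w_N)$ defined in \eqref{dw} satisfies $w_i = x_i$ for every $i \in [N]$ (there are no $u$-variables at all).

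Under this specialization, the set $\underline{u}^{(i)} = \{u_j/x_i : x_j = x_i\}$ appearing inside the Schur function on the right-hand side of \eqref{sws} becomes empty, since the index range $1 \le j \le k$ is empty. Consequently, the factor
\begin{equation*}
s_{\phi^{(i,\sigma)}(N)}\bigl(\underline{u}^{(i)},1,\ldots,1\bigr)
\end{equation*}
reduces to $s_{\phi^{(i,\sigma)}(N)}(1,\ldots,1)$ evaluated at a tuple of $1$'s whose length equals the length of the partition $\phi^{(i,\sigma)}(N)$, namely $|\{1 \le j \le N : x_j = x_i\}|$, which is by definition $|\phi^{(i,\sigma)}(N)|$ in the notation used in the corollary. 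Similarly, the product
\begin{equation*}
\prod_{i<j,\ x_{\sigma(i)} \neq x_{\sigma(j)}} \frac{1}{w_{\sigma(i)} - w_{\sigma(j)}}
\end{equation*}
becomes $\prod_{i<j,\ x_{\sigma(i)} \neq x_{\sigma(j)}} \frac{1}{x_{\sigma(i)} - x_{\sigma(j)}}$, since every $w_m$ equals $x_m$. Putting these two simplifications together in \eqref{sws} yields precisely the identity \eqref{sws1} asserted in the corollary.

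The only thing that needs a brief check is that the coset summation $\ol\sigma \in [\Sigma_N / \Sigma_N^X]^r$ and the choice of representative $\sigma \in \ol\sigma \cap \Sigma_N$ are legitimate, i.e.\ that each summand depends only on the coset $\ol\sigma$ and not on the representative $\sigma$. This is inherited from Proposition \ref{p437}: replacing $\sigma$ by $\xi\sigma$ with $\xi \in \Sigma_N^X$ permutes, within each fiber $\{j : x_{\sigma(j)} = x_i\}$, the indices contributing to $\Phi^{(i,\sigma)}(N)$ and leaves $\phi^{(i,\sigma)}(N)$ unchanged as a partition, while simultaneously permuting equal values in the product over pairs with distinct $x$-values, so that product is also unchanged. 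I do not expect a genuine obstacle here; the entire content of Corollary \ref{p438} is the specialization $u_i = x_i$ (equivalently $k=0$) of the already-established Proposition \ref{p437}, so the proof is a one-line reduction once that specialization is recorded.
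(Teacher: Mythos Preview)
Your proposal is correct and is essentially the same approach as the paper's own proof, which simply states that the corollary follows from Proposition \ref{p437} by letting $u_i = x_i$ for all $i \in [k]$. The only cosmetic difference is that you phrase the specialization as taking $k=0$ rather than setting $u_i=x_i$; since $k\in[N]$ in the proposition, the latter phrasing is slightly cleaner, but the effect---every $w_i$ becomes $x_i$ and each $\underline{u}^{(i)}$ contributes only $1$'s to the Schur function---is identical.
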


\begin{proof}The corollary follows from Proposition \ref{p437} by letting $u_i=x_i$ for all $i\in[k]$.
\end{proof}

\bigskip
\noindent\textbf{Acknowledgements.}\ ZL acknowledges support from National Science Foundation DMS 1608896 and Simons Foundation grant 638143. ZL thanks the anonymous reviewers for their careful reading and for valuable suggestions that improved the clarity and readability of the paper.

\bibliography{sq,fpmm2}
\bibliographystyle{plain}

\end{document}